\newtheorem{theorem}{Theorem}[section]
\newtheorem{lemma}[theorem]{Lemma}
\newtheorem{prop}[theorem]{Proposition}
\newtheorem{thm}[theorem]{Theorem}
\newtheorem{cor}[theorem]{Corollary}
\theoremstyle{definition}
\newtheorem{Example}[theorem]{Example}
\newenvironment{example}
  {\begin{Example}\rm}{\hfill$\Box$\end{Example}}
\newtheorem{notation}[theorem]{Notation}
\theoremstyle{remark}
\newtheorem{remark}[theorem]{Remark}
\numberwithin{equation}{section}
\newcommand{\N}{\mathbb{N}}
\newcommand{\Z}{\mathbb{Z}}
\newcommand{\Q}{\mathbb{Q}}
\newcommand{\tV}{\widetilde{V}}
\newcommand{\tG}{\mathcal{H}(\Gamma)}
\newcommand{\cmax}{c_{\mathrm{max}}}
\newcommand{\sg}{\mathrm{S}\Gamma}
\newcommand{\tg}{\text{M}\Gamma}
\newcommand{\mg}{\Gamma^{\mathrm{mob}}}
\DeclareMathOperator{\wt}{wt}
\DeclareMathOperator{\outdeg}{outdeg}
\newcommand{\tD}{\widetilde{\Delta}}
\newcommand{\tL}{\widetilde{\mathcal{L}}}
\newcommand{\sand}{\mathcal{S}}
\newcommand{\po}{\mathcal{O}}
\newcommand{\ba}{\begin{eqnarray*}}
\newcommand{\ea}{\end{eqnarray*}}
\newcommand{\be}{\begin{enumerate}}
\newcommand{\ee}{\end{enumerate}}
\begin{document}

\title[Sandpiles and Dominos]{Sandpiles and Dominos}


\author[Florescu]{Laura Florescu}
\address{Courant Institute, NYU, New York}
\email{florescu@cims.nyu.edu}

\author[Morar]{Daniela Morar}
\address{Department of Economics, University of Michigan, Ann Arbor}
\email{morard@umich.edu}

\author[Perkinson]{David Perkinson}
\address{Department of Mathematics, Reed College}
\email{davidp@reed.edu}

\author[Salter]{Nick Salter}
\address{Department of Mathematics, University of Chicago}
\email{nks@math.uchicago.edu}

\author[Xu]{Tianyuan Xu}
\address{Department of Mathematics, University of Oregon}
\email{tianyuan@uoregon.edu}

\date{\today}

\begin{abstract}
  We consider the subgroup of the abelian sandpile group of the grid graph
  consisting of configurations of sand that are symmetric with respect to
  central vertical and horizontal axes.  We show that the size of this group
  is (i) the number of domino tilings of a corresponding weighted rectangular
  checkerboard; (ii) a product of special values of Chebyshev polynomials; and
  (iii) a double-product whose factors are sums of squares of values of
  trigonometric functions.  We provide a new derivation of the formula due to
  Kasteleyn and to Temperley and Fisher for counting the number of domino tilings of
  a $2m\times 2n$ rectangular checkerboard and a new way of counting the number
  of domino tilings of a $2m\times 2n$ checkerboard on a M\"obius strip.
\end{abstract}

\maketitle

\section{Introduction}\label{section:Introduction}
This paper relates the Abelian Sandpile Model (ASM) on a grid graph to domino
tilings of checkerboards.  The ASM is, roughly, a game in which one places
grains of sand on the vertices of a graph, $\Gamma$, whose vertices and edges we
assume to be finite in number.  If the amount of sand on a vertex reaches a
certain threshold, the vertex becomes unstable and fires, sending a grain of
sand to each of its neighbors.  Some of these neighbors, in turn, may now be
unstable.  Thus, adding a grain of sand to the system may set off a cascade of
vertex firings.  The resulting ``avalanche'' eventually subsides, even
though our graph is finite, since the system is not conservative: there is a
special vertex that serves as a sink, absorbing any sand that reaches it.  It is
assumed that every vertex is connected to the sink by a path of edges, so as a
consequence, every pile of sand placed on the graph stabilizes after a finite
number of vertex firings.  It turns out that this stable state only depends on
the initial configuration of sand, not on the order of the firings of unstable
vertices, which accounts for the use of the word ``abelian.''

Now imagine starting with no sand on $\Gamma$ then repeatedly choosing a vertex
at random, adding a grain of sand, and allowing the pile of sand to stabilize.
In the resulting sequence of configurations of sand, certain configurations will
appear infinitely often.  These are the so-called ``recurrent'' configurations.
A basic theorem in the theory of sandpiles is that the collection of recurrent
configurations forms an additive group, where addition is defined as vertex-wise
addition of grains of sand, followed by stabilization.  This group is called the
{\em sandpile group} or {\em critical group} of $\Gamma$.  Equivalent versions
of the sandpile group have arisen independently.  For a history and as a general
reference, see~\cite{Holroyd}.

In their seminal 1987 paper, Bak, Tang, and Wiesenfeld (BTW),~\cite{BTW}, studied
sandpile dynamics in the case of what we call the {\em sandpile grid graph}.  To
construct the $m\times n$ sandpile grid graph, start with the ordinary grid
graph with vertices $[m]\times[n]$ and edges $\{(i,j),(i',j')\}$ such that
$|i-i'|+|j-j'|=1$.  Then add a new vertex to serve as a sink, and add edges from
the boundary vertices to the sink so that each vertex on the grid has
degree~$4$.  Thus, corner vertices have two edges to the sink (assuming $m$ and
$n$ are greater than~$1$), as on the left in Figure~\ref{fig:sandpile grid
graph}.  Dropping one grain of sand at a time onto a sandpile grid graph and
letting the system stabilize, BTW experimentally finds that eventually the
system evolves into a barely stable ``self-organized critical'' state.  This
critical state is characterized by the property that the sizes of avalanches
caused by dropping a single grain---measured either temporally (by the number of
ensuing vertex firings) or spatially (by the number of different vertices that
fire)---obey a power law. The power-laws observed by BTW in the case of some
sandpile grid graphs have not yet been proven.

The ASM, due to  Dhar~\cite{Dhar1}, is a generalization of the BTW model to a
wider class of graphs.  It was Dhar who made the key observation of its abelian
property and who coined the term ``sandpile group'' for the collection of
recurrent configurations of sand.   In terms of the ASM, the evolution to a
critical state observed by BTW comes from the fact that by repeatedly adding a
grain of sand to a graph and stabilizing, one eventually reaches a configuration
that is recurrent.  Past this point, each configuration reached by adding sand
and stabilizing is again recurrent.

Other work on ASM and statistical physics includes~\cite{Fey}.  In addition, the
ASM has been shown to have connections with a wide range of mathematics,
including algebraic geometry and commutative algebra
(\cite{Baker},~\cite{Dochtermann},~\cite{Payne},~\cite{Madhu},~\cite{Madhu2},~\cite{Wilmes}),
pattern formation (\cite{Ostojic},\cite{Paoletti},\cite{Pedgen1},
\cite{Pedgen2},\cite{Sadhu}), potential theory
(\cite{Baker2},\cite{Biggs},\cite{Levine}), combinatorics
(~\cite{Hopkins2},~\cite{Hopkins1},\cite{Merino},~\cite{Postnikov}), and number
theory (\cite{Musiker}).  The citations here are by no means exhaustive.  One
might argue that the underlying reason for these connections is that the firing
rules for the ASM encode the discrete Laplacian matrix of the graph (as
explained in Section~\ref{section:Sandpiles}).  Thus, the ASM is a means of
realizing the dynamics implicit in the discrete Laplacian.
\begin{figure}[ht]
\begin{tikzpicture}[scale=0.5]
\def\i{10};
\def\j{3};
\node at (\i+0.5,\j+0.95){\# grains};
\node at (0,0){\includegraphics[height=3.0in]{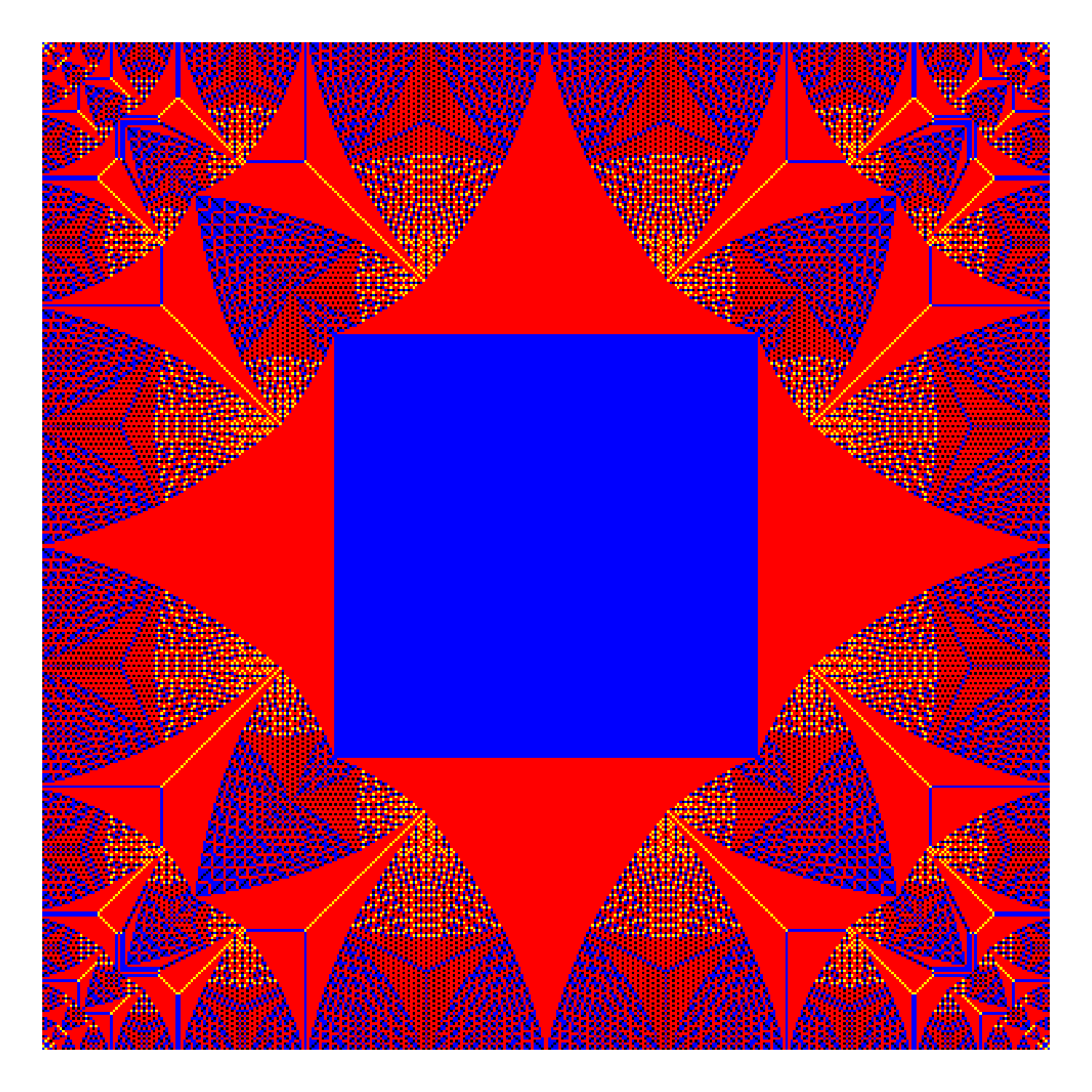}};
\draw[fill=black] (\i,\j) rectangle (\i+0.5,\j-0.5);
\node at (\i+1.4,\j-0.25){$=0$};
\draw[fill=yellow] (\i,\j-1) rectangle (\i+0.5,\j-1.5);
\node at (\i+1.4,\j-1.25){$=1$};
\draw[fill=blue] (\i,\j-2) rectangle (\i+0.5,\j-2.5);
\node at (\i+1.4,\j-2.25){$=2$};
\draw[fill=red] (\i,\j-3) rectangle (\i+0.5,\j-3.5);
\node at (\i+1.4,\j-3.25){$=3$};
\end{tikzpicture}
\caption{Identity element for the sandpile group of the $400\times400$
sandpile grid graph.}\label{fig:400x400identity}
\end{figure}

The initial motivation for our work was a question posed to the second and third
authors by Irena Swanson.  She was looking at an online computer
program~\cite{Maslov} for visualizing the ASM on a sandpile grid graph.  By
pushing a button, the program adds one grain of sand to each of the nonsink
vertices then stabilizes the resulting configuration.  Swanson asked, ``Starting
with no sand, how many times would I need to push this button to get the
identity of the sandpile group?'' A technicality arises here: the configuration
consisting of one grain of sand on each vertex is not recurrent, hence, not in
the group.  However, the all-$2$s configuration, having two grains at each
vertex, is recurrent.  So for the sake of this introduction, we reword the
question as: ``What is the order of the all-$2$s configuration?''

Looking at data (cf.~Section~\ref{section:order of all-2s},
Table~\ref{table:all-2s}), one is naturally led to the special case of the
all-$2$s configuration on the $2n\times2n$ sandpile grid graph, which we denote
by $\vec{2}_{2n\times 2n}$.  The orders for $\vec{2}_{2n\times2n}$ for
$n=1,\dots,5$ are
\[
1,3,29,901,89893.
\]
Plugging these numbers into the Online Encyclopedia of Integer Sequences
yields a single match, sequence A065072 (\cite{OEIS}): the sequence of odd integers
$(a_n)_{n\geq1}$ such that $2^na_n^2$ is the number of domino tilings of the
$2n\times 2n$ checkerboard.\footnote{By a {\em checkerboard} we mean a rectangular array of squares.  A {\em domino} is a $1\times2$ or $2\times1$ array of squares.  A {\em domino tiling} of a
checkerboard consists of covering all of the squares of the checkerboard---each
domino covers two---with dominos.}
 (Some background on this sequence is included in
Section~\ref{section:order of all-2s}.)  So we conjectured that the order of
$\vec{2}_{2n\times2n}$ is equal to $a_n$, and trying to prove this is what first
led to the ideas presented here. Difficulty in finishing our
proof of the conjecture led to further computation, at which time we
(embarrassingly) found that the order of $\vec{2}_{2n\times2n}$ for $n=6$ is,
actually, $5758715=a_6/5$.  Thus, the conjecture is false, and there are
apparently at least two natural sequences that start $1,3,29,901, 89893$!
Theorem~\ref{thm4} shows that the cyclic group generated
by $\vec{2}_{2n\times2n}$ is isomorphic to a subgroup of a sandpile group whose
order is $a_n$, and therefore the order of $\vec{2}_{2n\times2n}$ divides $a_n$.
We do not know when equality holds, and we have not yet answered Irena Swanson's question.

On the other hand, further experimentation using the mathematical software Sage
led us to a more fundamental connection between the sandpile group and domino
tilings of the grid graph.  The connection is due to a property that is a
notable feature of the elements of the subgroup generated by the all-$2$s
configuration---{\em symmetry} with respect to the central horizontal and
vertical axes.  The recurrent identity element for the sandpile grid graph, as
exhibited in Figure~\ref{fig:400x400identity}, also has this symmetry.\footnote{For
square grids, the identity is symmetric with respect to the dihedral group of
order $8$, but this phenomenon is of course not present in the rectangular grids
that we also consider.}  If $\Gamma$ is any graph equipped with an
action of a finite group~$G$, it is natural to consider the collection of
$G$-invariant configurations. Proposition~\ref{prop:symmetric configs}
establishes that the symmetric recurrent configurations form a subgroup of the
sandpile group for $\Gamma$.  The central purpose of this paper is to explain
how symmetry links the sandpile group of the grid graph to domino tilings.

We now describe our main results.  We study the recurrent configurations on the sandpile grid graph having $\Z/2 \times \Z/2$ symmetry with respect to the central horizontal and vertical axes. The cases of even$\times$even-, even$\times$odd-, and
odd$\times$odd-dimensional grids each have their own particularities, and so we divide their analysis into separate cases, resulting in Theorems~\ref{thm1},~\ref{thm2},
and~\ref{thm3}, respectively.  In each case, we compute the number of symmetric
recurrents as (i) the number of domino tilings of corresponding
(weighted) rectangular checkerboards; (ii) a product of special values of Chebyshev
polynomials; and (iii) a double-product whose factors are sums of squares of
values of trigonometric functions.

For instance, of the $557,568,000$ elements of the sandpile group of the $4
\times 4$ grid graph, only the $36$ configurations displayed in
Figure~\ref{fig:checker4x4} are up-down and left-right symmetric.  In accordance
with Theorem~\ref{thm1},
\begin{align}\label{intro:double product}
36&=U_4(i\cos(\pi/5))\,U_4(i\cos(2\pi/5))\notag\\[5pt]
&=\prod_{h=1}^{2}\prod_{k=1}^2\left(4\,\cos^2(h\pi/5)+4\,\cos^2(k\pi/5)\right),
\end{align}
where $U_4(x)=16x^4-12x^2+1$ is the fourth Chebyshev polynomial of the second
kind.

The double-product in equation~\eqref{intro:double product} is an instance of
the famous formula due to Kasteleyn~\cite{Kasteleyn} and to Temperley and
Fisher~\cite{Temperley} for the number of domino tilings of a $2m\times 2n$
checkerboard:
\[
\prod_{h=1}^m\prod_{k=1}^n\left(4\,\cos^2\frac{h\pi}{2m+1}+
4\,\cos^2\frac{k\pi}{2n+1}\right),
\]
for which Theorem~\ref{thm1} provides a new proof.

In the case of the even$\times$odd grid, there is an extra ``twist'': the
double-product in Theorem~\ref{thm2} for the even$\times$odd grid is (a slight
re-writing of) the formula of Lu and Wu~\cite{LW} for the number of domino
tilings of a checkerboard on a M\"obius strip.

To sketch the main idea behind the proofs of these theorems, suppose a group~$G$
acts on a graph $\Gamma$ with fixed sink vertex
(cf.~Section~\ref{subsection:Symmetric configurations}).  To study symmetric
configurations with respect to the action of $G$, one considers a new firing
rule in which a vertex only fires simultaneously with all other vertices in its
orbit under~$G$.  This new firing rule can be encoded in an $m\times m$ matrix
$D$ where $m$ is the number of orbits of nonsink vertices of $G$.  We show in
Corollary~\ref{cor:nsr} that $\det(D)$ is the number of symmetric recurrents on
$G$.  Suppose, as is the case for for sandpile grid graphs, that either $D$ or
its transpose happens to be the (reduced) Laplacian of an associated graph
$\Gamma'$. The nonsink vertices correspond to the orbits of vertices of the
original graph.  The well-known matrix-tree theorem says that the determinant of
$D$ is the number of spanning trees of $\Gamma'$.  Then the generalized
Temperley bijection~\cite{KPW} says these spanning trees correspond with perfect
matchings of a third graph $\Gamma''$.  In this way, the symmetric recurrents on
$\Gamma$ can be put into correspondence with the perfect matchings of
$\Gamma''$.  In the case where $\Gamma$ is a sandpile grid graph, $\Gamma''$ is
a weighted grid graph, and perfect matchings of it correspond to weighted
tilings of a checkerboard.  Also, in this case, the matrix $D$ has a nice block
triangular form (cf.~Lemma~\ref{lemma:tridiagonal}), which leads to a recursive
formula for its determinant and a connection with Chebyshev polynomials.
\newpage

\centerline{\sc outline}
\medskip

\hangindent1em
\hangafter=0
\noindent\ref{section:Introduction} {\bf Introduction.}\vskip4pt

\hangindent1em
\hangafter=0
\noindent\ref{section:Sandpiles} {\bf Sandpiles.}\vskip2pt

\hangindent2em
\hangafter=0
\noindent\ref{subsection:Basics} {\bf Basics.}  A summary of the basic theory of
sandpiles needed for this paper.\vskip2pt

\hangindent2em
\hangafter=0
\noindent\ref{subsection:Symmetric configurations} {\bf Symmetric
configurations.}  Group actions on sandpile graphs.
Proposition~\ref{prop:symmetric configs} shows that the collection of symmetric
recurrents forms a subgroup of the sandpile group.  We introduce the {\em
symmetrized reduced Laplacian} operator,~$\tD^G$, and use it to determine the structure of
this subgroup in Proposition~\ref{prop:symmetric subgroup iso}.  An important
consequence is Corollary~\ref{cor:nsr}, which shows that the number of
symmetric recurrents equals $\det\tD^G$.\vskip4pt

\hangindent1em
\hangafter=0
\noindent\ref{section:Matchings and trees} {\bf Matchings and trees.}  A
description of the generalized Temperley bijection~\cite{KPW} between
weighted spanning trees of a planar graph and weighted perfect matchings of a
related graph.\vskip4pt

\hangindent1em
\hangafter=0
\noindent\ref{section:symmetric recurrents} {\bf Symmetric recurrents on the sandpile grid
graph.} We count symmetric recurrents on sandpile grid graphs using weighted
tilings of checkerboards, Chebyshev polynomials, and Kasteleyn-type formulae.
The problem is split into three cases.\vskip2pt

\hangindent2em \hangafter=0 \noindent\ref{subsection:tridiagonal} {\bf Some
tridiagonal matrices.} A summary of some properties of Chebyshev polynomials and
a proof Lemma~\ref{lemma:tridiagonal}, which calculates the determinant of a
certain form of tridiagonal block matrix.  The symmetrized reduced Laplacian
matrices for the three classes of sandpile grid graphs, below, have this form.
Their determinants count symmetric recurrents.\vskip2pt

\hangindent2em
\hangafter=0
\noindent\ref{subsection:symmetric recurrents on evenxeven grid}  {\bf Symmetric
recurrents on a $2m\times 2n$ sandpile grid graph.}  See Theorem~\ref{thm1}.\vskip2pt

\hangindent2em
\hangafter=0
\noindent\ref{subsection:symmetric recurrents on evenxodd grid}  {\bf Symmetric
recurrents on a $2m\times(2n-1)$ sandpile grid graph.} See
Theorem~\ref{thm2}.\vskip2pt

\hangindent2em
\hangafter=0
\noindent\ref{subsection:symmetric recurrents on oddxodd grid}  {\bf Symmetric
recurrents on a $(2m-1)\times(2n-1)$ sandpile grid graph.} See
Theorem~\ref{thm3}.\vskip4pt

\hangindent1em \hangafter=0 \noindent\ref{section:order of all-2s} {\bf The
order of the all-twos configuration.} Corollary~\ref{cor:all-2s order}: the
order of the all-$2$s configuration on the $2n\times2n$ sandpile grid divides
the odd number $a_n$ such that $2^na_n^2$ is the number of domino tilings of the
$2n\times2n$ checkerboard.\vskip4pt

\hangindent1em
\hangafter=0
\noindent\ref{section:conclusion} {\bf Conclusion.}  A list of open problems.
\medskip

\noindent{\bf Acknowledgments.}  We thank Irena Swanson for providing initial
motivation.  We thank the organizers of the Special Session on Laplacian Growth
at the Joint Mathematics Meeting, New Orleans, LA, 2011 at which some of this
work was presented, and we thank Lionel Levine, in particular, for encouragement
and helpful remarks.  Finally, we would like to acknowledge the mathematical
software Sage~\cite{sage} and the Online Encyclopedia of Integer
Sequences~\cite{OEIS} which were both essential for our investigations.
 
\section{Sandpiles}\label{section:Sandpiles}
\subsection{Basics}\label{subsection:Basics} In this section, we recall the basic
theory of sandpile groups.  The reader is referred to \cite{Holroyd} for
details.  Let $\Gamma=(V,E,\wt,s)$ be a directed graph with vertices $V$, edges
$E$, edge-weight function $\wt\colon V\times V\to \N:=\{0,1,2,\dots\}$, and
special vertex $s\in V$.  For each pair $v,w\in V$, we think of $\wt(v,w)$ as
the number of edges running from $v$ to $w$.  In particular, $\wt(v,w)>0$ if and
only if $(v,w)\in E$.  The vertex $s$ is called the {\em sink} of $\Gamma$, and
it is assumed that each vertex of $\Gamma$ has a directed path to $s$. Let $\tV
:= V\setminus\{s\}$ be the set of non-sink vertices.  A {\em (sandpile)
configuration} on $\Gamma$ is an element of $\N\tV$, the free monoid on $\tV$.
If $c=\sum_{v\in\tV}c_v\,v$ is a configuration, we think of each component,
$c_v$, as a number of grains of sand stacked on vertex $v$.  The vertex
$v\in\tV$ is {\em unstable} in $c$ if $c_v\geq \outdeg(v)$ where
$\outdeg(v):=\sum_{w\in V}\wt(v,w)$, is the {\em out-degree} of $v$, i.e., the
number of directed edges emanating from~$v$.  If $v$ is unstable in $c$, we may
{\em fire} ({\em topple}) $c$ at $v$ to get a new configuration $c'$ defined for
each $w\in\tV$ by
\[
c'_w=
\begin{cases}
  c_v-\outdeg(v)+\wt(v,v)&\mbox{if $w=v$,}\\
  c_w+\wt(v,w)&\mbox{if $w\neq v$}.
\end{cases}
\]
In other words,
\[
c' = c - \outdeg(v)v+\textstyle\sum_{w\in\tV}\wt(v,w)\,w.
\]
If the configuration $\tilde{c}$ is obtained from $c$ by a sequence of firings
of unstable vertices, we write
\[
c\to\tilde{c}.
\]
Since each vertex has a path to the sink, $s$, it turns out that by repeatedly
firing unstable vertices each configuration relaxes to a stable configuration.
Moreover, this stable configuration is independent of the ordering of firing of
unstable vertices. Thus, we may talk about {\em the} stabilization of a
configuration $c$, which we denote by~$c^{\circ}$.  Define the binary operation
of {\em stable addition} on the set of all configurations as component-wise
addition followed by stabilization.  In other words, the stable addition of
configurations $a$ and $b$ is given by
\[
(a+b)^{\circ}.
\]
Let $\mathcal{M}$ denote the collection of stable
configurations on $\Gamma$. Then stable addition restricted to $\mathcal{M}$
makes $\mathcal{M}$ into a commutative monoid. 

A configuration $c$ on $\Gamma$ is {\em recurrent} if: (1) it is stable, and (2)
given any configuration $a$, there is a configuration $b$ such that
$(a+b)^{\circ}=c$.  The {\em maximal stable configuration}, $\cmax$, is defined
by
\[
\cmax:=\sum_{v\in\tV}(\outdeg(v)-1)\,v.
\]
It turns out that the collection of recurrent configurations forms
a principal semi-ideal of $\mathcal{M}$ generated by $c_{\mathrm{max}}$.  This
means that the recurrent configurations are exactly those obtained by adding
sand to the maximal stable configuration and stabilizing.  Further, the
collection of recurrent configurations forms a group, $\sand(\Gamma)$,
called the {\em sandpile group} for $\Gamma$.  Note that the identity for
$\mathcal{S}(\Gamma)$ is not usually the zero-configuration, $\vec{0}\in\N\tV$.

For an undirected graph, i.e., a graph for which $\wt(u,v)=\wt(v,u)$ for each
pair of vertices $u$ and $v$, one may use the {\em burning algorithm}, due to
Dhar~\cite{Dhar2}, to determine whether a configuration is recurrent (for a
generalization to directed graphs, see~\cite{Speer}):
\begin{thm}[{\cite{Dhar2},\cite[Lemma~4.1]{Holroyd}}]\label{thm:Dhar}  Let $c$ be
  a stable configuration on an undirected graph $\Gamma$.  Define the {\em burning configuration}
  on~$\Gamma$ to be the configuration obtained by {\em firing the sink vertex}:
  \[
  b:=\sum_{v\in\tV}\wt(s,v)\,v.
  \]
  Then in the stabilization of $b+c$, each vertex fires at most once, and
  the following are equivalent:
  \begin{enumerate}
    \item $c$ is recurrent;
    \item $(b+c)^{\circ}=c$;
    \item in the stabilization of $b+c$, each non-sink vertex fires.
  \end{enumerate}
\end{thm}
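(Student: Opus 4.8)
The plan is to establish ``each nonsink vertex fires at most once'' first, then prove $(3)\Rightarrow(2)$, $(2)\Rightarrow(3)$, $(1)\Rightarrow(2)$, and finally $(2)\Rightarrow(1)$, the last being the delicate step; throughout I use the abelian identity $(x+y^{\circ})^{\circ}=(x+y)^{\circ}$ and the characterization recalled above that $c$ is recurrent iff $c=(\cmax+a)^{\circ}$ for some configuration $a$. \emph{Fires at most once.} Stabilize $b+c$ by firing unstable vertices in some order and suppose some nonsink vertex fires twice; let $v$ be the first vertex to fire a second time and examine the configuration just before that firing. At that instant the sink has fired once (adding $\wt(s,v)$ grains to $v$), $v$ itself has fired exactly once (changing its own count by $\wt(v,v)-\outdeg(v)$), and every other vertex $w$ has fired at most once (adding at most $\wt(w,v)$ grains to $v$). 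Hence $v$ carries at most
\[
c_v+\wt(s,v)+\wt(v,v)-\outdeg(v)+\sum_{w\in\tV\setminus\{v\}}\wt(w,v)=c_v+\sum_{w\in V}\wt(w,v)-\outdeg(v)=c_v
\]
grains, using $\sum_{w\in V}\wt(w,v)=\sum_{w\in V}\wt(v,w)=\outdeg(v)$ on an undirected graph. Since $v$ can fire only when carrying at least $\outdeg(v)$ grains, this forces $c_v\geq\outdeg(v)$, contradicting the stability of $c$.

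\emph{$(2)\Leftrightarrow(3)$.} The key observation is that firing every vertex of $V$ exactly once leaves any configuration unchanged, since the net change at each $v$ is $\sum_{w}\wt(w,v)-\outdeg(v)=0$; equivalently, firing the sink once and then firing every nonsink vertex once is the identity. Let $U\subseteq\tV$ be the set of nonsink vertices that fire during the stabilization of $b+c$; by the previous step each fires exactly once. If $U=\tV$, then the stabilization amounts to firing the sink and then every nonsink vertex once, so $(b+c)^{\circ}=c$. Conversely, if $(b+c)^{\circ}=c$, the total displacement of the sink's firing together with the one firing of each vertex of $U$ is zero; comparing coordinates, for $v\in U$ this forces $\sum_{w\in\tV\setminus U}\wt(w,v)=0$, and for $v\in\tV\setminus U$ it forces $\wt(s,v)=\sum_{w\in U}\wt(w,v)=0$. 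So no edge joins $\{s\}\cup U$ to $\tV\setminus U$; since $\Gamma$ is connected and $s\in\{s\}\cup U$, the set $\tV\setminus U$ is empty, i.e.\ every nonsink vertex fires.

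\emph{$(1)\Rightarrow(2)$.} Apply the first two steps to $c=\cmax$: each nonsink vertex fires at most once in stabilizing $b+\cmax$, and if a vertex $v$ did not fire, its final, stable amount $(\outdeg(v)-1)+\wt(s,v)+\sum_{w\in U}\wt(w,v)\leq\outdeg(v)-1$ would force $\wt(s,v)=0$ and no fired neighbour, so the non-firing set would be disconnected from $s$---impossible unless empty. Hence every vertex fires and, as above, $(b+\cmax)^{\circ}=\cmax$. If now $c$ is recurrent, write $c=(\cmax+a)^{\circ}$; then
\[
(b+c)^{\circ}=(b+\cmax+a)^{\circ}=\bigl((b+\cmax)^{\circ}+a\bigr)^{\circ}=(\cmax+a)^{\circ}=c .
\]

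\emph{$(2)\Rightarrow(1)$: the main obstacle.} Iterating $(x+y^{\circ})^{\circ}=(x+y)^{\circ}$, the hypothesis $(b+c)^{\circ}=c$ gives $(Nb+c)^{\circ}=c$ for all $N\geq1$, so it suffices to show $(Nb+c)^{\circ}$ is recurrent once $N$ is large. The intended argument is a flooding argument: for $N\gg0$, firing the sink $N$ times overloads its neighbours, which then overload theirs, and so on, so by connectedness and the monotonicity of the odometer (adding sand never decreases any firing count) every nonsink vertex fires during the stabilization of $Nb+c$; and a configuration reached by stabilizing a configuration in which every nonsink vertex fires must be recurrent. It is precisely this last implication that needs care. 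The cleanest way to secure it is through Dhar's forbidden-subconfiguration criterion---a stable $c$ is recurrent iff there is no nonempty $W\subseteq\tV$ with $c_v<\sum_{w\in W}\wt(v,w)$ for all $v\in W$---since if every nonsink vertex fires then the first vertex $v$ of such a $W$ to fire would, by the ``fires at most once'' bound applied to the vertices outside $W$, have to carry at least $\sum_{w\in W}\wt(v,w)$ grains, contradicting its defining inequality; the same criterion yields $(1)\Rightarrow(3)$ directly. I expect the genuinely fussy point of the proof to be making ``every vertex fires'' interact correctly with the odometer in this final implication.
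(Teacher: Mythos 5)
The paper does not actually prove Theorem~\ref{thm:Dhar}: it is quoted from Dhar and from Holroyd et al.\ as a known result, so there is no in-paper argument to compare against and your proposal has to stand on its own. Most of it does. The ``fires at most once'' bound is the standard argument and is correct (using $\sum_{w\in V}\wt(w,v)=\outdeg(v)$ for an undirected graph and the stability of $c$); the equivalence $(2)\Leftrightarrow(3)$, via the observation that firing every vertex of $V$ exactly once is the identity together with the assumption that every vertex has a path to $s$, is correct; and $(1)\Rightarrow(2)$ is handled cleanly by first checking that every vertex fires in stabilizing $b+\cmax$, so $(b+\cmax)^{\circ}=\cmax$, then writing a recurrent $c$ as $(\cmax+a)^{\circ}$ (immediate from the paper's definition of recurrence, taking $a=\cmax$ there) and invoking the abelian identity $(x+y)^{\circ}=(x+y^{\circ})^{\circ}$.

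The genuine gap is $(2)\Rightarrow(1)$. Your reduction is fine as far as it goes: from $(2)$ you already have $(3)$, and your ``first vertex of $W$ to fire'' argument correctly shows that $c$ then admits no forbidden subconfiguration (so the detour through $Nb+c$ and the flooding sketch is not even needed). But you then conclude recurrence by citing the forbidden-subconfiguration criterion as a black box, and the direction you need --- a stable configuration with no forbidden subconfiguration is recurrent in the sense used here (for every $a$ there exists $a'$ with $(a+a')^{\circ}=c$) --- is a theorem of the same depth as the burning test itself; its textbook proofs either go through the burning test (which would make your argument circular) or require a separate substantial argument, e.g.\ a counting comparison of allowed configurations with $\det\tD$, or an explicit construction exhibiting $c=(\cmax+x)^{\circ}$. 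Your flooding alternative does not close the gap either: granting that every vertex fires in stabilizing $Nb+c$ for large $N$, the assertion that ``a configuration obtained by stabilizing a configuration in which every nonsink vertex fires is recurrent'' is precisely the implication $(3)\Rightarrow(1)$ you are trying to establish, as you yourself flag. To finish elementarily you would need something like: for $N$ large one can order the firings in the stabilization of $Nb+c$ so as to pass through an intermediate configuration dominating $\cmax$ componentwise, which gives $c=(Nb+c)^{\circ}=(\cmax+x)^{\circ}$ with $x\geq0$ and hence recurrence straight from the definition. As written, the hardest implication of the theorem is assumed rather than proved.
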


Define the {\em proper Laplacian}, $L\colon \Z^V\to\Z^V$, of $\Gamma$ by
\[
L(f)(v):=\sum_{w\in V}\wt(v,w)(f(v)-f(w))
\]
for each function $f\in\Z^V$.  Taking the $\Z$-dual (applying the functor
$\mathrm{Hom}(\ \cdot\ ,\Z))$ gives the mapping of free abelian groups
\[
\Delta\colon\Z V\to\Z V
\]
defined on vertices $v\in V$ by
\[
\Delta(v)=\outdeg(v)\,v-\sum_{w\in V}\wt(v,w)\,w.
\]
We call $\Delta$ the {\em Laplacian} of $\Gamma$.  Restricting $\Delta$ to
$\Z\tV$ and setting the component of~$s$ equal to $0$ gives the {\em reduced
Laplacian}, $\tD\colon\Z\tV\to\Z\tV$.  If $v$ is an unstable vertex in a
configuration $c$, firing $v$ gives the new configuration
\[
c-\tD v.
\]

There is a well-known isomorphism
\begin{align}\label{basic iso}
  \mathcal{S}(\Gamma)&\to\Z\tV/\mathrm{image}(\tD)\\
  c&\mapsto c.\nonumber
\end{align} 
While there may be many stable configurations in each equivalence class of
$\Z\tV$ modulo $\mathrm{image}(\tD)$, there is only one that is recurrent.  For
instance, the recurrent element in the equivalence class of $\vec{0}$ is the
identity of $\mathcal{S}(\Gamma)$.

A {\em spanning tree of $\Gamma$ rooted at $s$} is a directed subgraph
containing all the vertices, having no directed cycles, and for which $s$ has
no out-going edges while every other vertex has exactly one out-going edge.  The
weights of the edges of a spanning tree are the same as they
are for $\Gamma$, and the {\em weight} of a spanning tree is the product of the
weights of its edges.  The matrix-tree theorem says the sum of the weights of
the set of all spanning trees of $\Gamma$ rooted at $s$ is equal to $\det\tD$, the
determinant of the reduced Laplacian.  It then follows from (\ref{basic iso})
that the number of elements of the sandpile group is also the sum of the weights
of the spanning trees rooted at $s$.

\subsection{Symmetric configurations}\label{subsection:Symmetric configurations} 

Preliminary versions of the results in this section appear in \cite{Durgin}.
Let $G$ be a finite group.  An {\em action} of $G$ on $\Gamma$ is an action of
$G$ on $V$ fixing $s$, sending edges to edges, and preserving edge-weights.  In
detail, it is a mapping
\begin{eqnarray*}
  G\times V&\to&V\\
  (g,v)&\mapsto&gv
\end{eqnarray*}
satisfying
\begin{enumerate}
  \item if $e$ is the identity of $G$, then $ev=v$ for all $v\in V$;
  \item $g(hv)=(gh)v$ for all $g,h\in G$ and $v\in V$;
  \item $gs=s$ for all $g\in G$;
  \item if $(v,w)\in E$, then $(gv,gw)\in E$ and both edges have the same
    weight.
\end{enumerate}
Note that these conditions imply that $\outdeg(v)=\outdeg(gv)$ for all
$v\in V$ and $g\in G$.
For the rest of this section, let $G$ be a group acting on $\Gamma$.

By linearity, the action of $G$ extends to an action on $\N V$ and $\Z V$. 
Since~$G$ fixes the sink, $G$ acts on configurations and each element of
$G$ induces an automorphism of $\mathcal{S}(\Gamma)$ (cf.~\ref{cor:preserved}).
We say a configuration $c$ is {\em symmetric} (with respect to the action
by $G$) if $gc=c$ for all $g\in G$.

\begin{prop}
  The action of $G$ commutes with stabilization.  That is, if $c$ is any
  configuration on $\Gamma$ and $g\in G$, then $g(c^{\circ})=(gc)^{\circ}$.
\end{prop}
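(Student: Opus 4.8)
The plan is to show that a single firing commutes with the $G$-action and then extend to stabilization by induction on the number of firings, using the confluence (order-independence) of the stabilization process.

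First I would establish the key elementary fact: if $v$ is an unstable vertex in a configuration $c$, then for any $g \in G$ the vertex $gv$ is unstable in $gc$, and moreover firing $gc$ at $gv$ yields $g(c - \tD v) = gc - \tD(gv)$. The unstable claim is immediate from $\outdeg(gv) = \outdeg(v)$ (noted in the text) together with $(gc)_{gv} = c_v$. For the firing identity, I would use the formula $c' = c - \outdeg(v)\,v + \sum_{w \in \tV}\wt(v,w)\,w$ and apply $g$ to both sides; since $g$ permutes $\tV$ and condition (4) in the definition of a $G$-action gives $\wt(v,w) = \wt(gv,gw)$, the sum $\sum_w \wt(v,w)\,w$ maps to $\sum_w \wt(gv,gw)\,gw = \sum_{w'} \wt(gv,w')\,w'$, which is exactly the spreading term in the firing of $gc$ at $gv$. (There is a minor bookkeeping point about the sink component, but since $g$ fixes $s$ and we only track $\tV$, it causes no trouble.)

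Next I would run the induction. Suppose $c \to \tilde c$ via a sequence of firings at vertices $v_1, v_2, \dots, v_k$. Applying the one-step result repeatedly, $gc \to g\tilde c$ via the sequence of firings at $gv_1, \dots, gv_k$, where at each stage the relevant vertex is unstable in the relevant configuration because it is the $g$-image of an unstable vertex. In particular, if $\tilde c = c^{\circ}$ is stable, then $g c^{\circ}$ is reached from $gc$ by a sequence of firings; and $g c^{\circ}$ is itself stable, since $(g c^{\circ})_{gv} = (c^{\circ})_v < \outdeg(v) = \outdeg(gv)$ for every vertex $gv$, i.e., every vertex is stable. By the uniqueness of the stabilization (independence of the order of firings, quoted in Section~\ref{subsection:Basics}), the stable configuration reachable from $gc$ is unique, so $(gc)^{\circ} = g(c^{\circ})$.

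I do not anticipate a serious obstacle here; the statement is essentially a compatibility check. The one place requiring care is making sure the one-step firing identity is stated at the level of configurations on $\tV$ (so that the sink component is handled correctly) and that the $G$-action genuinely permutes $\tV$ — this is where the hypotheses $gs = s$ and that $g$ sends edges to weight-preserving edges are used. Given that, the proof is a clean two-line induction invoking confluence of stabilization.
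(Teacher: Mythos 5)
Your proposal is correct and follows essentially the same route as the paper: translate the firing sequence $v_1,\dots,v_t$ that stabilizes $c$ into the sequence $gv_1,\dots,gv_t$ for $gc$, using that $g$ preserves out-degrees and weights (so instability and the firing rule commute with the action), and conclude $(gc)^{\circ}=g(c^{\circ})$ by uniqueness of the stabilization. Your write-up just makes the one-step identity $g(c-\tD v)=gc-\tD(gv)$ and the appeal to confluence more explicit than the paper does.
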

\begin{proof}  Suppose that $c$ is stabilized by firing the sequence of vertices
  $v_1,\dots,v_t$.  Then
  \[
  c^{\circ}=c-\sum_{i=1}^t\tD v_i.
  \]
  At the $k$-th step in the stabilization process, $c$ has relaxed to the
  configuration $c':=c-\sum_{i=1}^k\tD v_i$.  A vertex $v$ is unstable in
  $c'$ if and only if $gv$ is unstable in
  $gc'=gc-\sum_{i=1}^k\tD (gv_i)$.  Thus, we can fire the sequence of
  vertices $gv_1,\dots,gv_t$ in~$gc$, resulting in the stable configuration
  \[
  (gc)^{\circ}=gc-\sum_{i=1}^t\tD (g v_i).
  \]
\end{proof}
\begin{cor}\label{cor:preserved}
  The action of $G$ preserves recurrent configurations, i.e., if $c$
is a recurrent configuration and $g\in G$, then $gc$
  is recurrent. 
\end{cor}
\begin{proof}
  If $c$ is recurrent, we can find a configuration $b$ such that
  $c=(b+\cmax)^{\circ}$.  Then,
  \[
  gc=g(b+\cmax)^{\circ}=(gb+g\cmax)^{\circ}=(gb+\cmax)^{\circ}.
  \]
  Hence, $gc$ is recurrent.
\end{proof}
\begin{cor}\label{cor:symm-stab}
  If $c$ is a symmetric configuration, then so is its stabilization.
\end{cor}
\begin{proof}
  For all $g\in G$, if $gc=c$, then
  $g(c^{\circ})=(gc)^{\circ}=c^{\circ}$.
\end{proof}
\begin{remark}
  In fact, if $c$ is a symmetric configuration, one may find a sequence of
  symmetric configurations, $c_1, \dots,c_t$ with $c_t=c^{\circ}$ such that
  $c\to c_1\to\cdots\to c_t$.  This follows since in a symmetric configuration
  a vertex $v$ is unstable if and only if $gv$ is unstable for all $g\in G$.  To
  construct $c_{i+1}$ from $c_i$, simultaneously fire all unstable vertices of
  $c_i$ (an alternative is to pick any vertex $v$, unstable in $c_{i}$, and
  simultaneously fire the vertices in $\{gv:g\in G\}$).
\end{remark}
\begin{prop}\label{prop:symmetric configs}
  The collection of symmetric recurrent configurations forms a subgroup of the
  sandpile group $\mathcal{S}(\Gamma)$.  
\end{prop}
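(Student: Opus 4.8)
The plan is to realize the set of symmetric recurrents as the common fixed-point set of the automorphisms of $\mathcal{S}(\Gamma)$ induced by the elements of $G$, and then invoke the elementary fact that a fixed-point set of group automorphisms is a subgroup.

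First I would check that each $g\in G$ acts as a group automorphism of $\mathcal{S}(\Gamma)$. Corollary~\ref{cor:preserved} shows that $g$ maps recurrents to recurrents, and since $g^{-1}\in G$ this map is a bijection. It is moreover a homomorphism for stable addition, by the proposition that the action commutes with stabilization: for recurrent configurations $a,b$,
\[
g\bigl((a+b)^{\circ}\bigr)=\bigl(g(a+b)\bigr)^{\circ}=(ga+gb)^{\circ}.
\]
Thus each $g\in G$ restricts to an automorphism of $\mathcal{S}(\Gamma)$.

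Next, observe that a recurrent configuration $c$ is symmetric exactly when $gc=c$ for every $g\in G$, so the symmetric recurrents form the set $\bigcap_{g\in G}\{\,x\in\mathcal{S}(\Gamma):gx=x\,\}$. For a single automorphism $g$, its fixed set is a subgroup of $\mathcal{S}(\Gamma)$: it contains the identity (an automorphism fixes the identity element), it is closed under stable addition since $g\bigl((x+y)^{\circ}\bigr)=(gx+gy)^{\circ}=(x+y)^{\circ}$ whenever $gx=x$ and $gy=y$, and it is closed under inverses since $g(x^{-1})=(gx)^{-1}=x^{-1}$. An intersection of subgroups is a subgroup, which gives the proposition. (The closure and inverse steps can alternatively be read off from Corollary~\ref{cor:symm-stab}; I prefer the fixed-point formulation because it also settles nonemptiness cleanly.)

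I do not expect a real obstacle here. The one point deserving care is nonemptiness: since the identity of $\mathcal{S}(\Gamma)$ is in general not the zero configuration $\vec{0}$, one cannot merely say ``$\vec{0}$ is symmetric.'' Instead, the identity is symmetric precisely because it is fixed by every automorphism $g$. Equivalently, $g$ commutes with $\tD$ (as $G$ fixes $s$ and preserves out-degrees and edge weights), hence $g$ preserves $\mathrm{image}(\tD)$ and fixes the class of $\vec{0}$, which under the isomorphism~\eqref{basic iso} is the identity of $\mathcal{S}(\Gamma)$.
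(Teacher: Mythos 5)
Your proof is correct, but it takes a different route from the paper's. The paper argues in three short steps: the sum (stable addition) of two symmetric recurrents is symmetric and recurrent because the $G$-action respects addition and stabilization; the set is nonempty because $\cmax$ is both symmetric (out-degrees are $G$-invariant) and recurrent; and since $\mathcal{S}(\Gamma)$ is finite, a nonempty subset closed under the group operation is automatically a subgroup. You instead upgrade Corollary~\ref{cor:preserved} and the commuting-with-stabilization proposition into the statement that each $g\in G$ induces an automorphism of $\mathcal{S}(\Gamma)$ (a fact the paper asserts in passing but does not use in this proof), and then identify the symmetric recurrents as $\bigcap_{g\in G}\mathrm{Fix}(g)$, a subgroup because fixed-point sets of automorphisms are subgroups and intersections of subgroups are subgroups. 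What your approach buys: you get the identity and inverses directly from the automorphism property, so you never invoke finiteness of $\mathcal{S}(\Gamma)$, and nonemptiness is automatic rather than requiring the explicit witness $\cmax$ (your remark that one cannot simply cite $\vec{0}$ is a good catch, and your $\tD=g\tD g^{-1}$ argument for the identity class matches a computation the paper does later in Proposition~\ref{prop:symmetric subgroup iso}). What the paper's approach buys is brevity: closure plus the concrete symmetric recurrent $\cmax$ plus finiteness settles everything in three sentences, at the cost of leaning on finiteness, which your argument shows is not needed.
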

\begin{proof}
  Since the group action respects addition in $\N\tV$ and
  stabilization, the sum of two symmetric recurrent configurations is again
  symmetric and recurrent.  There is at least one symmetric recurrent
  configuration, namely, $\cmax$.  Since the sandpile group is
  finite, it follows that these configurations form a subgroup.  
\end{proof}
\medskip

\begin{notation}
The subgroup of symmetric recurrent configurations on  $\Gamma$ with respect to
the action of the group $G$ is denoted $\mathcal{S}(\Gamma)^G$.
\end{notation}
\begin{prop}
  If $c$ is symmetric and recurrent then $c=(a+\cmax)^{\circ}$ for some
  symmetric configuration~$a$.
\end{prop}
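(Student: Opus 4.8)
The plan is to build $a$ directly inside the sandpile group, using the subgroup structure established in Proposition~\ref{prop:symmetric configs}, rather than trying to symmetrize a preimage of $c$ by hand. Recall first that $\cmax=\sum_{v\in\tV}(\outdeg(v)-1)v$ is itself symmetric: the action of $G$ preserves out-degrees, so $g\cmax=\cmax$ for all $g\in G$. Since $\cmax$ is also recurrent, we have $\cmax\in\mathcal{S}(\Gamma)^G$, and by hypothesis $c\in\mathcal{S}(\Gamma)^G$ as well.

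By Proposition~\ref{prop:symmetric configs}, $\mathcal{S}(\Gamma)^G$ is a subgroup of the finite abelian group $\mathcal{S}(\Gamma)$, hence closed under taking inverses. Let $d$ denote the inverse of $\cmax$ in $\mathcal{S}(\Gamma)$ (so $d$ is a recurrent configuration), and set $a:=(c+d)^{\circ}$, the product of $c$ and $d$ in the sandpile group. Then $a\in\mathcal{S}(\Gamma)^G$; in particular $a$ is a symmetric configuration (indeed a symmetric recurrent one), as required.

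It remains to check that $(a+\cmax)^{\circ}=c$. Since $a$, $d$, $\cmax$, and $c$ are all recurrent, each stable sum below is a product in the group $\mathcal{S}(\Gamma)$, and stable addition is associative; thus
\[
(a+\cmax)^{\circ}=\big((c+d)^{\circ}+\cmax\big)^{\circ}=(c+d+\cmax)^{\circ}=\big(c+(d+\cmax)^{\circ}\big)^{\circ}=c,
\]
the last equality holding because $(d+\cmax)^{\circ}$ is the identity of $\mathcal{S}(\Gamma)$ and $c$ is recurrent.

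I expect no genuine obstacle beyond two small bookkeeping points: verifying that $\cmax\in\mathcal{S}(\Gamma)^G$ (so that passing to the inverse $d$ stays inside the symmetric subgroup), and observing that the operation $(\,\cdot\,+\cmax)^{\circ}$ appearing in the statement coincides with multiplication in $\mathcal{S}(\Gamma)$ once its left argument is recurrent. A more computational alternative—starting from $c=(b+\cmax)^{\circ}$ for an arbitrary configuration $b$ and trying to symmetrize $b$, for instance by replacing it with $\sum_{g\in G}gb$—is tempting, but adding surplus sand to a recurrent configuration and restabilizing generally changes it, so that route does not obviously close; the subgroup argument sidesteps the difficulty entirely.
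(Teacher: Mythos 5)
Your proof is correct, but it takes a genuinely different route from the paper's. You work entirely inside the group: since $\cmax$ is symmetric (out-degrees are $G$-invariant) and recurrent, it lies in $\sand(\Gamma)^G$, which Proposition~\ref{prop:symmetric configs} already establishes is a subgroup, so its inverse $d$ is also a symmetric recurrent; setting $a=(c+d)^{\circ}$ and using the standard interchange law $(x^{\circ}+y)^{\circ}=(x+y)^{\circ}$ (the fact underlying the monoid/group structure of stable addition, available from~\cite{Holroyd}) gives $(a+\cmax)^{\circ}=(c+(d+\cmax)^{\circ})^{\circ}=c$. The paper instead argues at the level of the reduced Laplacian: it takes a nonnegative $b$ in the image of $\tD$ whose support reaches every vertex (via~\cite{Speer}), symmetrizes it to $b^G=\sum_{g\in G}gb$, scales by a large $N$ and fires symmetrically to obtain a symmetric $b'\geq\cmax$ with $b'\equiv 0$ modulo $\mathrm{image}(\tD)$, and sets $a=b'-\cmax+c$, concluding by uniqueness of the recurrent representative in each class modulo $\mathrm{image}(\tD)$. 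Note that the paper's construction is close in spirit to the ``computational alternative'' you dismiss, but it evades the pitfall you identify precisely by not symmetrizing a particular preimage $b$ of $c$; it only needs \emph{some} symmetric element of $\mathrm{image}(\tD)$ dominating $\cmax$. Your argument buys brevity, avoids the appeal to~\cite{Speer}, and produces an $a$ that is moreover recurrent; the paper's argument is more constructive and exhibits the symmetrization-plus-scaling technique explicitly, at the cost of more bookkeeping.
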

\begin{proof} By \cite{Speer} there exists an element $b$ in the image of
  $\tD$ such that: (1) $b_v\geq0$ for all $v\in\tV$, and (2) for each
  vertex $w\in\tV$, there is a directed path to $w$ from some $v\in\tV$ such that
  $b_v>0$, i.e., from some $v$ in the {\em support} of $b$.  (If $\Gamma$ is
  undirected, one may find such a $b$ by applying
  $\tD$ to the vector whose components are all~$1$s).  Define
\[
b^G=\sum_{g\in G}gb.
\]
Then $b^G$ is symmetric and equal to zero modulo the image of $\tD $.  Take a
large positive integer $N$ and consider $Nb^G$, the vertex-wise addition of $b^G$
with itself~$N$ times without stabilizing.  Every vertex of $\Gamma$ is
connected by a path from a vertex in the support of~$b$, and hence, the same is
true of $Nb^G$.  Thus, by choosing~$N$ large enough and by firing symmetric
vertices of $Nb^G$, we obtain a symmetric configuration $b'$ such that $b'_v\geq
c_{\mathrm{max},v}$ for all $v$ and such that $b'$ is zero modulo the image of
$\tD$.  Define $a=b'-\cmax+c$, by construction a symmetric configuration.
The unique recurrent element in the equivalence
class of $b'+c$ modulo the image of $\tD$ is $c$.  Therefore,
\[
(a+\cmax)^{\circ}=(b'+c)^{\circ}=c.
\]
\end{proof}

The {\em orbit} of $v\in V$ under~$G$ is the set
\[
Gv=\{gv:g\in G\}.
\]
Let $\mathcal{O}=\mathcal{O}(\Gamma,G)=\{Gv:v\in \tV\}$ denote the set of orbits
of the non-sink vertices.  The {\em symmetrized reduced Laplacian} is the $\Z$-linear mapping 
\begin{equation}
\tD^G\colon\Z\mathcal{O}\to\Z\mathcal{O}
\end{equation}\label{eqn:srl}
such that for all $v,w\in\tV$, the $Gw$-th component of $\tD^G(Gv)$
 is
\[
\left(\textstyle\sum_{u\in Gv}\tD(u)\right)_w.
\]
\begin{remark}\label{orbit correspondence} If $c\in\Z\tV$ is symmetric, then
  define $[c]\in\Z\mathcal{O}$ by $[c]_{Gv}:=c_v$ for all $v\in\tV$, thus
  obtaining a bijection between symmetric elements of $\Z\tV$ and
  $\Z\mathcal{O}$.  
The mapping $\tD^G$ is defined so that if $c$ is a symmetric
  configuration and $v\in \tV$, then $[c]-\tD^G(Gv)$ is the element of $\Z\mathcal{O}$
  corresponding to 
  \[
  c-\tD(\textstyle\sum_{w\in Gv}w),
  \]
  the symmetric configuration obtained from $c$ by firing all vertices in the
  orbit of $v$.
\end{remark}

For the following let $r\colon\Z\tV/\mathrm{image}(\tD)\to\mathcal{S}(\Gamma)$
  denote the inverse of the isomorphism in (\ref{basic iso}).
\begin{prop}\label{prop:symmetric subgroup iso} There is an isomorphism of groups,
  \[
  \phi\colon\Z\mathcal{O}/\mathrm{image}(\tD^G)\to\mathcal{S}(\Gamma)^G,
  \]
  determined by $Gv\mapsto r(\textstyle\sum_{w\in Gv}w)$ for $v\in\tV$.
\end{prop}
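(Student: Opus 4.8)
The plan is to show that the map $\phi$ is well-defined, a homomorphism, injective, and surjective, by leveraging the bijection of Remark~\ref{orbit correspondence} between symmetric elements of $\Z\tV$ and elements of $\Z\mathcal{O}$, together with the structure already established for $\mathcal{S}(\Gamma)$ via the isomorphism~\eqref{basic iso}. First I would set up notation: write $\sigma\colon\Z\mathcal{O}\to\Z\tV$ for the ``symmetrization'' section sending $Gv\mapsto\sum_{w\in Gv}w$, so that the proposed map is $\phi(Gv) = r(\sigma(Gv))$ extended $\Z$-linearly, i.e. $\phi([c]) = r(\sum_{v}c_v\,v)$ for symmetric $c\in\Z\tV$ (using that $[c]$ determines such a $c$). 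The key compatibility is exactly Remark~\ref{orbit correspondence}: $\sigma(\tD^G(Gv)) = \tD(\sigma(Gv))$, i.e. $\sigma$ intertwines $\tD^G$ with $\tD$. Hence $\sigma$ carries $\mathrm{image}(\tD^G)$ into $\mathrm{image}(\tD)$, which immediately gives that $\phi$ is well-defined on the quotient $\Z\mathcal{O}/\mathrm{image}(\tD^G)$, and it is a group homomorphism because $r$ is and $\sigma$ is additive.

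For surjectivity, I would invoke the preceding proposition: every symmetric recurrent $c$ has the form $c=(a+\cmax)^{\circ}$ for some symmetric $a$, so in $\Z\tV/\mathrm{image}(\tD)$ the class of $c$ equals the class of $a+\cmax$, which lies in the image of $\sigma$ on the symmetric sublattice (both $a$ and $\cmax$ are symmetric, since $\outdeg(v)=\outdeg(gv)$). Therefore $r^{-1}(c) = [a+\cmax]$ modulo $\mathrm{image}(\tD^G)$ is hit by $\phi$; more carefully, I need that the symmetric lattice elements congruent mod $\mathrm{image}(\tD)$ to a given symmetric $c$ are exactly the $\sigma$-images of a single coset mod $\mathrm{image}(\tD^G)$, which is the injectivity statement. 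For injectivity, suppose $\phi([c])=0$ in $\mathcal{S}(\Gamma)^G$, i.e. $\sigma([c]) = \sum_v c_v v \in\mathrm{image}(\tD)$; I must produce a preimage in $\mathrm{image}(\tD^G)$. Here is where the argument needs care: write $\sigma([c]) = \tD(x)$ for some $x\in\Z\tV$ not a priori symmetric. Averaging over $G$, $\sum_{g\in G}g\,\tD(x) = \tD(\sum_g gx) = |G|\cdot\sigma([c])$ since $\sigma([c])$ is symmetric; but I want to divide by $|G|$, which is not valid over $\Z$. Instead I would argue orbit-by-orbit: because $\sigma([c])$ is constant on each orbit and $\tD$ commutes with the $G$-action, one can show $\tD^G$ is precisely the matrix of $\tD$ restricted to the symmetric sublattice in the orbit-sum basis, and a symmetric lattice vector lies in $\mathrm{image}(\tD)$ if and only if it lies in $\mathrm{image}(\tD|_{\text{sym}})$ — this last equivalence is the crux.

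The main obstacle will be exactly that last equivalence: showing $\Z\tV^{\,\mathrm{sym}}\cap\mathrm{image}(\tD) = \mathrm{image}(\tD|_{\Z\tV^{\,\mathrm{sym}}})$, equivalently that $\sigma$ induces an \emph{injection} $\Z\mathcal{O}/\mathrm{image}(\tD^G)\hookrightarrow\Z\tV/\mathrm{image}(\tD)$. I expect the clean way around this is \emph{not} to prove that inclusion directly by lattice arguments (which can fail for general group actions), but to observe that $\phi$ maps onto the finite group $\mathcal{S}(\Gamma)^G$, so it suffices to show $|\Z\mathcal{O}/\mathrm{image}(\tD^G)| \leq |\mathcal{S}(\Gamma)^G|$ — or, better, to exhibit an explicit two-sided inverse. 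Concretely I would define $\psi\colon\mathcal{S}(\Gamma)^G\to\Z\mathcal{O}/\mathrm{image}(\tD^G)$ by $\psi(c) = [c]$ (the orbit-vector of the symmetric recurrent $c$, which makes sense by Corollary~\ref{cor:symm-stab} and the bijection of Remark~\ref{orbit correspondence}), check using the preceding proposition and Remark~\ref{orbit correspondence} that $\psi$ is a well-defined homomorphism, and verify $\phi\circ\psi = \mathrm{id}$ and $\psi\circ\phi = \mathrm{id}$ by tracing the definitions through $r$ and $\sigma$. This reduces everything to the already-established Propositions and the single bookkeeping identity $\sigma\circ\tD^G = \tD\circ\sigma$, avoiding any delicate lattice-index computation.
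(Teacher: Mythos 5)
You correctly isolate the crux --- that a symmetric element of $\mathrm{image}(\tD)$ must lie in the image of the symmetric sublattice, equivalently that the induced map $\Z\mathcal{O}/\mathrm{image}(\tD^G)\to\Z\tV/\mathrm{image}(\tD)$ is injective --- but you then decline to prove it, asserting it ``can fail for general group actions,'' and the workaround you propose does not close the gap. In this setting the statement never fails and has a two-line proof, which is exactly what the paper does: $\tD$ is injective (its determinant is the positive number of weighted spanning trees rooted at $s$) and $G$-equivariant ($g\tD g^{-1}=\tD$ because $G$ preserves edges and weights), so if $\tD(b)=a$ with $a$ symmetric then $\tD(gb)=g\,\tD(b)=ga=a=\tD(b)$, forcing $gb=b$ for all $g$; hence $b$ is symmetric and $[a]=\tD^G([b])\in\mathrm{image}(\tD^G)$. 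Your worry about dividing by $|G|$ is a red herring: no averaging is needed, only injectivity and equivariance of $\tD$.

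The substitute you propose --- exhibiting $\psi(c)=[c]$ as a two-sided inverse and ``verifying $\psi\circ\phi=\mathrm{id}$ by tracing the definitions'' --- is where the gap reappears. For $o\in\Z\mathcal{O}$, $\phi$ sends the class of $o$ to the recurrent representative of the class of $\lambda(o)$ modulo $\mathrm{image}(\tD)$; that representative differs from $\lambda(o)$ by some $\tD(b)$ with $b\in\Z\tV$ a priori not symmetric, and concluding $[\,r(\lambda(o))\,]\equiv o \pmod{\mathrm{image}(\tD^G)}$ is precisely the crux again. (One could instead redo the construction from the preceding proposition: symmetrize an element $\tD x$ of the image as $\sum_{g}g\,\tD x=\tD\bigl(\sum_{g}gx\bigr)$, scale by a large $N$, and fire orbit-wise to reach a symmetric representative dominating $\cmax$; but that is a genuine argument, not definition-tracing, and you have not supplied it.) You also never verify that $\phi$ lands in $\mathcal{S}(\Gamma)^G$ at all, i.e.\ that the recurrent representative of the class of a symmetric lattice vector is itself symmetric; the paper gets this by writing $\phi(Gv)=\left(\,|\mathcal{S}(\Gamma)|\cdot\cmax+\lambda(Gv)\right)^{\circ}$ and invoking Corollary~\ref{cor:symm-stab}. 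The parts of your plan covering well-definedness, the homomorphism property, and $\phi\circ\psi=\mathrm{id}$ (surjectivity) are fine and essentially match the paper; the injectivity half is where your proposal is incomplete.
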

\begin{proof}
  The homomorphism $\lambda\colon\Z\mathcal{O}\to\Z\tV$ determined by
  \[
  \lambda(Gv):=\sum_{w\in Gv}w
  \]
  for $v\in\tV$ induces the (well-defined) mapping
\[
\Lambda\colon\Z\mathcal{O}/\mathrm{image}(\tD^G)\to\Z\tV/\mathrm{image}(\tD).
\] 
To see that the image of $r\circ\Lambda$ is symmetric, consider
the symmetric configuration $|\mathcal{S}(\Gamma)|\cdot\cmax\in\Z\tV$, a
configuration in the image of $\tD$.  For
each $v\in\tV$,
\[
\phi(Gv)=r(\Lambda(Gv))=\left(\,|\mathcal{S}(\Gamma)|\cdot\cmax+{\lambda}(Gv)\right)^{\circ},
\]
which is symmetric by Corollary~\ref{cor:symm-stab}.  

The mapping $c\mapsto[c]$, introduced in Remark \ref{orbit correspondence}, is a
left inverse to $\lambda$.  Thus, if $c\in\mathcal{S}(\Gamma)^G$,  then
$\phi([c])=c$, and hence $\phi$ is surjective.  To show that $\phi$ is
injective, it suffices to show that $\Lambda$ is injective.  So suppose that
$a=\lambda(o)$ for some $o\in\Z\mathcal{O}$ and that
$a=\tD(b)$ for some $b\in\Z\tV$.  Fix $g\in G$, and consider the isomorphism
$g\colon \Z\tV\to\Z\tV$ determined by the action of $g$ on vertices.  A
straightforward calculation shows that $\tD=g\tD g^{-1}$.  It follows that
\[
\tD(b)= a = ga = g\tD b = (g\tD g^{-1})(gb) = \tD(gb).
\]
Since $\tD$ is invertible, it follows that $b=gb$ for all $g\in G$, i.e., $b$ is
symmetric.  Hence, $o=[a]=\tD^G([b])$, as required.
\end{proof}
\begin{cor}\label{cor:nsr}  The number of symmetric recurrent configurations is
\[
|\mathcal{S}(\Gamma)^G|=\det\tD^G.
\]
\end{cor}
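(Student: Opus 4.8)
The plan is to deduce the corollary directly from Proposition~\ref{prop:symmetric subgroup iso}. That proposition establishes a group isomorphism
\[
\phi\colon\Z\mathcal{O}/\mathrm{image}(\tD^G)\xrightarrow{\ \sim\ }\mathcal{S}(\Gamma)^G,
\]
so the two finite sets have the same cardinality, and the task reduces to computing the order of the cokernel $\Z\mathcal{O}/\mathrm{image}(\tD^G)$.

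First I would observe that $\Z\mathcal{O}$ is free of finite rank $|\mathcal{O}|$, and that $\tD^G\colon\Z\mathcal{O}\to\Z\mathcal{O}$ is an endomorphism of this free module represented, in the standard orbit basis, by an integer matrix. A standard fact from the theory of finitely generated abelian groups (via Smith normal form) is that for an endomorphism $M$ of a free $\Z$-module of finite rank, the quotient $\Z^k/\mathrm{image}(M)$ is finite if and only if $\det M\neq 0$, in which case its order is exactly $|\det M|$. So the remaining point is to check that $\det\tD^G\neq 0$ and that it is positive, so that $|\det\tD^G|=\det\tD^G$.

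For nonvanishing, I would argue as in the injectivity part of the proof of Proposition~\ref{prop:symmetric subgroup iso}: there it is shown that the map $\Lambda$ is injective, and since $\Z\mathcal{O}$ and $\Z\tV$ have the relevant finiteness, combined with the fact that $\phi$ is an isomorphism onto the \emph{finite} group $\mathcal{S}(\Gamma)^G$, the cokernel of $\tD^G$ is finite; hence $\det\tD^G\neq0$. Alternatively, and perhaps more cleanly, since $\mathcal{S}(\Gamma)^G$ is a subgroup of the finite group $\mathcal{S}(\Gamma)$, it is itself finite, and $\phi$ being an isomorphism forces $\Z\mathcal{O}/\mathrm{image}(\tD^G)$ to be finite, which is equivalent to $\det\tD^G\neq0$. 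For positivity, I would note that $\tD^G$ is a nonnegative-diagonal matrix that is weakly diagonally dominant (each diagonal entry is $\outdeg$ of a representative vertex, each row's off-diagonal absolute values sum to at most that, since firing an orbit sends sand out along the original edges), and that it arises as (a reduction of) a Laplacian-type operator; such matrices have nonnegative determinant, and combined with nonvanishing this gives $\det\tD^G>0$. In fact the cleanest route is simply: $\det\tD^G=|\det\tD^G|=|\Z\mathcal{O}/\mathrm{image}(\tD^G)|=|\mathcal{S}(\Gamma)^G|\geq 1$, where the first equality uses that the determinant of the symmetrized reduced Laplacian is a priori nonnegative from its structure.

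The main obstacle, such as it is, is purely bookkeeping: confirming that $\det\tD^G$ is genuinely nonnegative rather than merely that $|\det\tD^G|$ counts the group. This is not deep—it follows either from the diagonal dominance of $\tD^G$ or from exhibiting $\tD^G$ (or its transpose) as a reduced Laplacian of an auxiliary graph, which is exactly the structure exploited later in the paper for the grid graph. With that settled, the corollary is immediate: $|\mathcal{S}(\Gamma)^G|=|\Z\mathcal{O}/\mathrm{image}(\tD^G)|=\det\tD^G$.
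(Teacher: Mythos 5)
Your main line is exactly the route the paper intends (it offers no written proof, treating the corollary as immediate): Proposition~\ref{prop:symmetric subgroup iso} gives $|\sand(\Gamma)^G|=|\Z\mathcal{O}/\mathrm{image}(\tD^G)|$, finiteness of the right-hand side forces $\det\tD^G\neq0$, and the Smith-normal-form fact identifies that cardinality with $|\det\tD^G|$. So the only substantive point left is the sign, and you correctly isolate it.

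Your justification of positivity, however, is not right as stated. The diagonal entry of $\tD^G$ at an orbit $Gv$ is not $\outdeg(v)$ but $\outdeg(v)$ minus the weight of the edges into $v$ from the other vertices of its own orbit (in Example~\ref{symmetry example} the diagonal entry for $Gu$ is $2$ while $\outdeg(u)=3$), and in the general directed setting of Section~\ref{subsection:Symmetric configurations} the matrix $\tD^G$ need not be weakly diagonally dominant by rows or by columns, nor need it (or its transpose) be the reduced Laplacian of any sandpile graph. For instance, let $G=\Z/2$ swap $u_1,u_2$ and fix $w$, with directed edges $u_1\to w$, $u_2\to w$, $w\to s$; with respect to the orbits $\{u_1,u_2\},\{w\}$ one gets $\tD^G=\left[\begin{smallmatrix}1&0\\-2&1\end{smallmatrix}\right]$, which is dominant in neither sense and has a negative column sum, so it is not a reduced Laplacian and neither is its transpose. (For undirected $\Gamma$ your row-dominance inequality does hold once the diagonal is described correctly, since then $\outdeg(w)\geq\sum_{u\in\tV}\wt(u,w)$; and for the grid graphs later in the paper the reduced-Laplacian structure is indeed available.) A sign argument valid in full generality: $\lambda$ identifies $\Z\mathcal{O}$ with the $G$-invariant sublattice of $\Z\tV$, and $\tD^G$ is precisely the matrix of the restriction of $\tD$ to that sublattice in the basis $\{\lambda(Gv)\}$; hence its eigenvalues are among those of $\tD$, which lie in the closed right half-plane because the columns of $\tD$ itself are genuinely diagonally dominant (each column sum is $\wt(v,s)\geq0$). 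Since the real eigenvalues of the restriction are then nonnegative, complex ones pair off, and none vanish by injectivity, $\det\tD^G>0$ and your chain $\det\tD^G=|\det\tD^G|=|\Z\mathcal{O}/\mathrm{image}(\tD^G)|=|\sand(\Gamma)^G|$ closes.
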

\begin{remark}
We have not assumed that the action of $G$ on $\Gamma$ is faithful.  If $K$ is
the kernel of the action of $G$, then
$\mathcal{O}(\Gamma,G)=\mathcal{O}(\Gamma,G/K)$
and $\mathcal{S}^G=\mathcal{S}^{G/K}$.  We also have $\tD^G=\tD^{G/K}$.
\end{remark}

\begin{example}\label{symmetry example}
  Consider the graph $\Gamma$ of Figure~\ref{fig:symmetry example} with sink $s$
  and with each edge having weight~$1$.

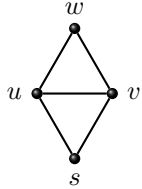
\begin{figure}[ht] 
\begin{tikzpicture}[scale=1.0]
\SetVertexMath
\GraphInit[vstyle=Art]
\SetUpVertex[MinSize=3pt]
\SetVertexLabel
\tikzset{VertexStyle/.style = {%
shape = circle,
shading = ball,
ball color = black,
inner sep = 1.5pt
}}
\SetUpEdge[color=black]

\Vertex[LabelOut=true,Lpos=180,x=0,y=0]{u}
\Vertex[LabelOut=true,Lpos=0,x=1,y=0]{v}
\Vertex[LabelOut=true,Lpos=90,x=0.5,y=0.866]{w}
\Vertex[LabelOut=true,Lpos=270,x=0.5,y=-0.866]{s}
\Edges(u,s,v,w,u)
\Edge[](u)(v)
\end{tikzpicture}
\caption{The graph $\Gamma$ for Example~\ref{symmetry
example}.}\label{fig:symmetry example}
\end{figure}

Let $G=\{e,g\}$ be the group of order $2$ with identity $e$. Consider the action
of~$G$ on $\Gamma$ for which $g$ swaps vertices $u$ and $v$ and fixes vertices
$w$ and $s$.  Ordering the vertices of $\Gamma$ as $u,v,w$ and ordering the
orbits, $\mathcal{O}$, as $Gu$, $Gw$, the reduced Laplacian and the symmetrized
reduced Laplacian for $\Gamma$ become
\begin{center}
  \begin{tikzpicture}
\draw (0,0) node{$
\tD=
\left[
\begin{array}{rrr}
3&-1&-1\\
-1&3&-1\\
-1&-1&2
\end{array}
\right]$,};
\draw (-0.4,-0.9) node{$u$};
\draw (0.4,-0.9) node{$v$};
\draw (1.2,-0.9) node{$w$};
\draw (5,0) node{$
\tD^G=
\left[
\begin{array}{rr}
2&-1\\
-2&2
\end{array}
\right]$,};
\draw (5.1,-0.8) node{$Gu$};
\draw (6.0,-0.8) node{$Gw$};
  \end{tikzpicture}
\end{center}
where we have labeled the columns by their corresponding vertices or orbits for
convenience.  To illustrate how one would compute the columns of the symmetrized
reduced Laplacian in general, consider the column of $\tD^G$ corresponding to
$Gu=\{u,v\}$.  It was computed by first adding the $u$- and $v$-columns of $\tD$
to get the $3$-vector $\ell=(2,2,-2)$,  then taking the $u$ and $w$ components of $\ell$
since $u$ and $w$ were chosen as orbit representatives. 

There are $8=\det\tD$ recurrent elements $(c_u,c_v,c_w)$ of $\Gamma$:
\[
(0,2,1),(1,2,0),(1,2,1),(2,0,1),(2,1,0),(2,1,1),(2,2,0),(2,2,1),
\]
and $(2,2,0)$ is the identity of $\sand(\Gamma)$. In accordance with
Corollary~\ref{cor:nsr}, there are $2=\det\tD^G$ symmetric recurrent elements:
$(2,2,0)$ and $(2,2,1)$. 
\end{example}
\section{Matchings and trees}\label{section:Matchings and trees}
In this section, assume that $\Gamma=(V,E,\wt,s)$ is embedded in the plane, and
fix a face $f_s$ containing the sink vertex,
$s$.  In \S\ref{section:symmetric recurrents} and~\S\ref{section:order
of all-2s}, we always take $f_s$ to be the unbounded face. We recall the generalized Temperley bijection, due to \cite{KPW}, between
directed spanning trees of~$\Gamma$ rooted at $s$ and perfect matchings of a
related weighted undirected graph,~$\tG$.  (The graph $\tG$ would be denoted
$\mathcal{H}(s,f_s)$ in \cite{KPW}.)

It is sometimes convenient to allow an edge $e=(u,v)$ to be represented in the embedding
by distinct weighted edges $e_1,\dots,e_k$, each with tail $u$ and head $v$,
such that $\sum_{i=1}^k\wt(e_i)=\wt(e)$.  Also, we would like to be able to
embed a pair of oppositely oriented edges between the same vertices so that they coincide
in the plane.  For these purposes then, we work in the more general category of
weighted directed {\em multi}-graphs by allowing $E$ to be a {\em multiset} of
edges in which an edge $e$ with endpoints $u$ and $v$ is represented as the set
$e=\{u,v\}$ with a pair of weights $\wt(e,(u,v))$ and $\wt(e,(v,u))$, at least
one of which is nonzero.  Each edge in the embedding is then represented by a double-headed arrow
with two weight labels (the label $\wt(e,(u,v))$ being placed next to the head
vertex, $v$).  Figure~\ref{fig:embedded graph} shows a pair of edges $e=\{u,v\}$
and $e'=\{u,v\}$ where $\wt(e,(u,v)))=2$, $\wt(e,(v,u)))=0$, $\wt(e',(u,v)))=3$,
and $\wt(e',(v,u)))=1$.  The top edge,~$e$, represents a single directed edge
$(u,v)$ of weight~$2$, and the bottom edge represents a pair of directed edges
of weights $3$ and $1$.  The two edges combine to represent a pair of directed
edges, $(u,v)$ of weight~$5$ and $(v,u)$ of weight~$1$.
\begin{figure}[ht] 
\begin{tikzpicture}[scale=1.0]
\SetVertexMath
\GraphInit[vstyle=Art]
\SetUpVertex[MinSize=3pt]
\SetVertexLabel
\tikzset{VertexStyle/.style = {%
shape = circle,
shading = ball,
ball color = black,
inner sep = 1.5pt
}}
\SetUpEdge[color=black]

\tikzset{EdgeStyle/.append style = {<->,>=mytip,semithick}}
\Vertex[LabelOut=true,Lpos=180,x=0,y=0]{u}
\Vertex[LabelOut=true,Lpos=0,x=3,y=0]{v}
\Edge[style={bend left = 20}](u)(v)
\Edge[style={bend left = 20}](v)(u)
\draw (0.4,0.5) node{0};
\draw (0.4,-0.5) node{1};
\draw (2.6,0.5) node{2};
\draw (2.6,-0.5) node{3};
\end{tikzpicture}
\caption{Edges for a planar embedding of a weighted directed graph.}\label{fig:embedded graph}
\end{figure}
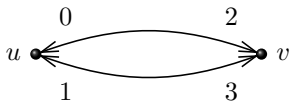

The rough idea of the construction of the weighted undirected graph~$\tG$ is to
overlay the embedded graph~$\Gamma$ with its dual, forgetting the orientation of
the edges and introducing new vertices where their edges cross.  Then remove $s$
and the vertex corresponding to the chosen face $f_s$, and remove their incident
edges.  In detail, the vertices of $\tG$ are
\[
V_{\tG}:=\{t_v:v\in V\setminus\{s\}\}\cup\{t_e: e\in
E\}\cup\{t_f:f\in F\setminus\{f_s\}\},
\]
where $F$ is the set of faces of $\Gamma$, including the unbounded face, and the
edges of~$\tG$ are
\[
E_{\tG}:=\{\{t_u,t_e\}: u\in V\setminus\{s\}, u\in e\in E\}\cup\{\{t_e,t_f\}:
e\in E, e\in f\in F\setminus\{f_s\}\}.
\]
The weight of each edge of the form $\{t_u,t_e\}$ with $e=\{u,v\}\in E$ is defined to be
$\wt(e,(u,v))$,  and the weight of each edge of the form $\{t_e,t_f\}$ with $f\in
F$ is defined to be $1$.  

Figure~\ref{fig:h-gamma} depicts a graph $\Gamma$ embedded in the plane (for
which the multiset $E$ is actually just a set).  The graph displayed in the
middle is the superposition of~$\Gamma$ with its dual, $\Gamma^{\perp}$.  The
unbounded face is chosen as~$f_s$.  For convenience, its corresponding vertex is omitted
from the middle graph, and its incident edges are only partially drawn.
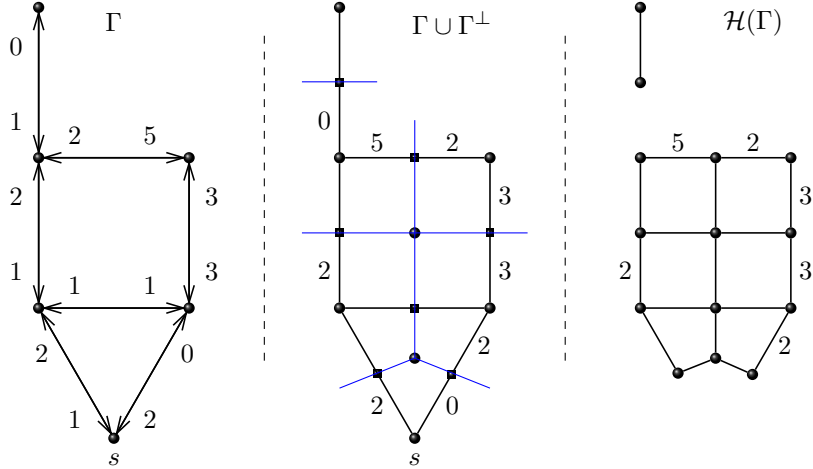
\begin{figure}[ht] 
\begin{tikzpicture}[scale=1.0]
\SetVertexMath
\GraphInit[vstyle=Art]
\SetUpVertex[MinSize=3pt]
\SetVertexLabel
\tikzset{VertexStyle/.style = {%
shape = circle,
shading = ball,
ball color = black,
inner sep = 1.5pt
}}
\SetUpEdge[color=black]

\tikzset{EdgeStyle/.append style = {->,>=mytip,semithick}}
\Vertex[NoLabel,x=0,y=4]{v_1}
\Vertex[NoLabel,x=0,y=2]{v_2}
\Vertex[NoLabel,x=0,y=0]{v_3}
\Vertex[LabelOut=true,Lpos=270,L=s,x=1,y=-1.732]{v_4}
\Vertex[NoLabel,x=2,y=2]{v_5}
\Vertex[NoLabel,x=2,y=0]{v_6}
\Edge[](v_1)(v_2)
\Edge[](v_2)(v_1)
\Edge[](v_2)(v_5)
\Edge[](v_5)(v_2)
\Edge[](v_2)(v_3)
\Edge[](v_3)(v_2)
\Edge[](v_3)(v_6)
\Edge[](v_6)(v_3)
\Edge[](v_5)(v_6)
\Edge[](v_6)(v_5)
\Edge[](v_3)(v_4)
\Edge[](v_4)(v_3)
\Edge[](v_6)(v_4)
\Edge[](v_4)(v_6)
\draw (-0.3,3.48) node{0};
\draw (-0.3,2.48) node{1};
\draw (-0.3,1.48) node{2};
\draw (-0.3,0.48) node{1};
\draw (1.48,0.3) node{1};
\draw (0.48,0.3) node{1};
\draw (2.3,1.48) node{3};
\draw (2.3,0.48) node{3};
\draw (1.48,2.3) node{5};
\draw (0.48,2.3) node{2};
\draw (0.04,-0.6) node{2};
\draw (0.48,-1.48) node{1};
\draw (1.48,-1.48) node{2};
\draw (1.98,-0.60) node{0};

\draw [dashed] (3.0,-0.7) -- (3.0,3.7);

\tikzset{EdgeStyle/.append style = {-}}
\Vertex[NoLabel,x=4,y=4]{v_1}
\Vertex[NoLabel,empty=true,x=4,y=3]{x03}
\Vertex[NoLabel,x=4,y=2]{v_2}
\Vertex[NoLabel,x=4,y=0]{v_3}
\Vertex[LabelOut=true,Lpos=270,L=s,x=5,y=-1.732]{v_4}
\Vertex[NoLabel,x=6,y=2]{v_5}
\Vertex[NoLabel,x=6,y=0]{v_6}
\tikzset{VertexStyle/.append style={shape=rectangle}}
\Vertex[NoLabel,x=4,y=3]{x03}
\Vertex[NoLabel,x=4,y=1]{x01}
\Vertex[NoLabel,x=5,y=2]{x12}
\Vertex[NoLabel,x=6,y=1]{x21}
\Vertex[NoLabel,x=5,y=0]{x10}
\Vertex[NoLabel,x=4.5,y=-0.866]{xneg1}
\Vertex[NoLabel,x=5.490,y=-0.88335]{xneg2}
\tikzset{VertexStyle/.append style={shape=circle}}

\Edge[](v_1)(x03)
\Edge[](v_2)(x03)
\Edge[](v_2)(v_3)
\Edge[](v_2)(v_5)
\Edge[](v_3)(v_6)
\Edge[](v_3)(v_4)
\Edge[](v_4)(xneg2)
\Edge[](v_5)(v_6)
\Edge[](v_6)(xneg2)
\draw (3.8,2.5) node{0};
\draw (3.8,0.5) node{2};
\draw (6.2,1.5) node{3};
\draw (6.2,0.5) node{3};
\draw (4.5,2.2) node{5};
\draw (5.5,2.2) node{2};
\draw (4.50,-1.3) node{2};
\draw (5.92,-0.5) node{2};
\draw (5.5,-1.3) node{0};

\Vertex[NoLabel,x=5,y=1]{mt}
\Vertex[NoLabel,x=5,y=-0.667]{mb}
\draw [blue] (3.5,3.01) -- (4.5,3.01);
\draw [blue] (5,1) -- (5,2.5);
\draw [blue] (5,1) -- (5,-0.666);
\draw [blue] (5,1) -- (3.5,1);
\draw [blue] (5,1) -- (6.5,1);
\draw [blue] (5,-0.666) -- (4,-1.0654);
\draw [blue] (5,-0.666) -- (6,-1.0654);

\draw [dashed] (7.0,-0.7) -- (7.0,3.7);

\Vertex[NoLabel,x=8,y=4]{v_1}
\Vertex[NoLabel,x=8,y=3]{x03}
\Vertex[NoLabel,x=8,y=2]{v_2}
\Vertex[NoLabel,x=8,y=1]{x01}
\Vertex[NoLabel,x=8,y=0]{v_3}
\Vertex[NoLabel,x=8.5,y=-0.866]{xneg1}
\Vertex[NoLabel,x=9,y=2]{x12}
\Vertex[NoLabel,x=9,y=0]{x10}
\Vertex[NoLabel,x=9.490,y=-0.88335]{xneg2}
\Vertex[NoLabel,x=10,y=2]{v_5}
\Vertex[NoLabel,x=10,y=1]{x21}
\Vertex[NoLabel,x=10,y=0]{v_6}
\Vertex[NoLabel,x=9,y=1]{mt}
\Vertex[NoLabel,x=9,y=-0.667]{mb}
\Edge[](v_1)(x03)
\Edge[](v_2)(x01)
\Edge[](v_2)(x12)
\Edge[](v_3)(x01)
\Edge[](v_3)(x10)
\Edge[](v_3)(xneg1)
\Edge[](v_5)(x12)
\Edge[](v_5)(x21)
\Edge[](v_6)(x21)
\Edge[](v_6)(x10)
\Edge[](v_6)(xneg2)
\Edge[](mt)(x12)
\Edge[](mt)(x10)
\Edge[](mt)(x01)
\Edge[](mt)(x21)
\Edge[](mb)(x10)
\Edge[](mb)(xneg1)
\Edge[](mb)(xneg2)
\draw (7.8,0.5) node{2};
\draw (10.2,1.5) node{3};
\draw (10.2,0.5) node{3};
\draw (8.5,2.2) node{5};
\draw (9.5,2.2) node{2};
\draw (9.92,-0.5) node{2};

\draw (1,3.8) node(G){$\Gamma$};
\draw (5.5,3.8) node(G){$\Gamma\cup\Gamma^{\perp}$};
\draw (9.5,3.8) node(G){$\tG$};
\end{tikzpicture}
\caption{Construction of $\tG$. (Unlabeled edges have weight~$1$.)}\label{fig:h-gamma}
\end{figure}

A {\em perfect matching} of a weighted undirected graph is a subset of its edges such that
each vertex of the graph is incident with exactly one edge in the subset.  The {\em weight}
of a perfect matching is the product of the weights of its edges.  

We now describe the weight-preserving bijection between perfect matchings
of $\tG$ and directed spanning trees of $\Gamma$ rooted at $s$ due to
\cite{KPW}.  Let $T$ be a directed spanning tree of $\Gamma$ rooted at $s$, and
let $\widetilde{T}$ be the corresponding directed spanning tree of
$\Gamma^{\perp}$, the dual of $\Gamma$, rooted at~$f_s$.  (The tree
$\widetilde{T}$ is obtained by properly orienting the edges of~$\Gamma^{\perp}$ that do not cross
edges of $T$ in  $\Gamma\cup\Gamma^{\perp}$.) The perfect matching
of $\tG$ corresponding to $T$ consists of the following:
\begin{enumerate}
  \item an edge $\{t_u,t_e\}$ of weight $\wt(e)$ for each $e=(u,v)\in T$;
  \item an edge $\{t_f,t_e\}$ of weight $1$ for each $\tilde{e}=(f,f')\in
    \widetilde{T}$, where $e$ is the edge in $\Gamma$ that crossed by $\tilde{e}$.
\end{enumerate}
See Figure~\ref{fig:h-gamma2} for an example continuing the example from Figure~\ref{fig:h-gamma}.
\begin{figure}[ht] 
\begin{tikzpicture}[scale=1.0]
\SetVertexMath
\GraphInit[vstyle=Art]
\SetUpVertex[MinSize=3pt]
\SetVertexLabel
\tikzset{VertexStyle/.style = {%
shape = circle,
shading = ball,
ball color = black,
inner sep = 1.9pt
}}
\SetUpEdge[color=black]

\tikzset{EdgeStyle/.append style = {->,>=mytip,ultra thick}}
\Vertex[NoLabel,x=4,y=4]{v_1}
\Vertex[NoLabel,empty=true,x=4,y=3]{x03}
\Vertex[NoLabel,x=4,y=2]{v_2}
\Vertex[NoLabel,x=4,y=0]{v_3}
\Vertex[LabelOut=true,Lpos=270,L=s,x=5,y=-1.732]{v_4}
\Vertex[NoLabel,x=6,y=2]{v_5}
\Vertex[NoLabel,x=6,y=0]{v_6}
\tikzset{VertexStyle/.append style={shape=rectangle}}
\Vertex[NoLabel,x=4,y=3]{x03}
\Vertex[NoLabel,x=4,y=1]{x01}
\Vertex[NoLabel,x=5,y=2]{x12}
\Vertex[NoLabel,x=6,y=1]{x21}
\Vertex[NoLabel,x=5,y=0]{x10}
\Vertex[NoLabel,x=4.5,y=-0.866]{xneg1}
\Vertex[NoLabel,x=5.490,y=-0.88335]{xneg2}
\tikzset{VertexStyle/.append style={shape=circle}}
\Vertex[NoLabel,x=5,y=1]{mt}
\Vertex[NoLabel,x=5,y=-0.667]{mb}
\Vertex[NoLabel,empty=true,x=3.2,y=1]{outleft}

\Edge[](v_1)(v_2)
\Edge[](v_2)(v_5)
\Edge[](v_5)(v_6)
\Edge[](v_6)(v_4)
\Edge[](v_3)(v_4)
\Edge[style={color=blue}](mb)(mt)
\Edge[style={color=blue}](mt)(outleft)
\draw (6.2,1.2) node{3};
\draw (4.8,2.23) node{5};
\draw (5.83,-0.74) node{2};

\draw [blue] (3.5,3.01) -- (4.5,3.01);
\draw [blue] (5,1) -- (5,2.5);
\draw [blue] (5,1) -- (5,-0.666);
\draw [blue] (5,1) -- (3.5,1);
\draw [blue] (5,1) -- (6.5,1);
\draw [blue] (5,-0.666) -- (4,-1.0654);
\draw [blue] (5,-0.666) -- (6,-1.0654);

\draw [dashed] (7.5,-0.7) -- (7.5,3.7);

\tikzset{VertexStyle/.style = {%
shape = circle,
shading = ball,
ball color = black,
inner sep = 1.5pt
}}
\Vertex[NoLabel,x=9,y=4]{v_1}
\Vertex[NoLabel,x=9,y=3]{x03}
\Vertex[NoLabel,x=9,y=2]{v_2}
\Vertex[NoLabel,x=9,y=1]{x01}
\Vertex[NoLabel,x=9,y=0]{v_3}
\Vertex[NoLabel,x=9.5,y=-0.866]{xneg1}
\Vertex[NoLabel,x=10,y=2]{x12}
\Vertex[NoLabel,x=10,y=0]{x10}
\Vertex[NoLabel,x=10.490,y=-0.88335]{xneg2}
\Vertex[NoLabel,x=11,y=2]{v_5}
\Vertex[NoLabel,x=11,y=1]{x21}
\Vertex[NoLabel,x=11,y=0]{v_6}
\Vertex[NoLabel,x=10,y=1]{mt}
\Vertex[NoLabel,x=10,y=-0.667]{mb}
\draw[thick] (v_1) -- (x03);
\draw[thick] (v_2) -- (x12);
\draw[thick] (v_5) -- (x21);
\draw[thick] (v_6) -- (xneg2);
\draw[thick] (v_3) -- (xneg1);
\draw[thick] (mb) -- (x10);
\draw[thick] (mt) -- (x01);
\draw (11.2,1.5) node{3};
\draw (9.5,2.2) node{5};
\draw (10.92,-0.5) node{2};
\draw (10.6,3.8) node(G){$\tG$};
\end{tikzpicture}
\caption{A spanning tree of $\Gamma$ determines a dual spanning tree for
$\Gamma^{\perp}$ and a perfect matching for $\tG$. (See
Figure~\ref{fig:h-gamma}.  Unlabeled edges have weight $1$.)}\label{fig:h-gamma2}
\end{figure}
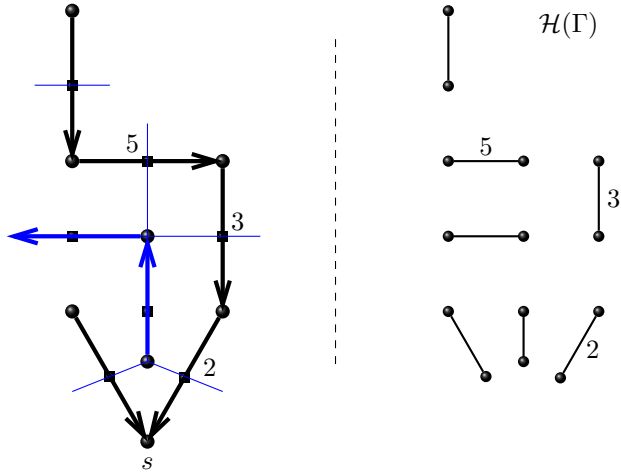

As discussed in~\cite{KPW}, although $\mathcal{H}(\Gamma)$ depends on the
embedding of $\Gamma$ and on the choice of $f_s$, the number of spanning trees
of $\Gamma$ rooted at $s$ (and hence, the number of perfect matchings of
$\mathcal{H}(\Gamma)$), counted according to weight, does not change.  In what
follows, we will always choose $f_s$ to be the unbounded face.

\section{Symmetric recurrents on the sandpile grid
graph}\label{section:symmetric recurrents} The {\em ordinary $m\times n$ grid
graph} is the undirected graph $\Gamma_{m\times n}$ with vertices $[m]\times[n]$
and edges $\{(i,j),(i',j')\}$ such that $|i-i'|+|j-j'|=1$.  The $m\times n$ {\em
sandpile grid graph}, $\sg_{m\times n}$, is formed from $\Gamma_{m\times n}$ by
adding a (disjoint) sink vertex, $s$, then edges incident to $s$ so that every
non-sink vertex of the resulting graph has degree~$4$.  For instance, each of the four
corners of the sandpile grid graph shares an edge of weight~$2$ with~$s$ in the
case where $m\geq2$ and $n\geq 2$, as on the left in Figure~\ref{fig:sandpile
grid graph}.  

We embed $\Gamma_{m\times n}$ in the plane as the standard grid with vertices
arranged as in a matrix, with $(1,1)$ in the upper left and $(m,n)$ in the lower
right.  We embed $\sg_{m\times n}$ similarly, but usually identify the sink
vertex, $s$, with the unbounded face of $\Gamma_{m\times n}$ for convenience in
drawing, as on the left-hand side in Figure~\ref{fig:sandpile grid graph}.  The
edges leading to the sink are sometimes entirely omitted from the drawing, as in
Figure~\ref{fig:8x6}.
\begin{figure}[ht] 
\begin{tikzpicture}[scale=0.7]
\draw[style=very thick] (1,1) grid (5,4);
\draw (1,1) -- (0.2,0.2);
\draw (1,4) -- (0.2,4.8);
\draw (5,1) -- (5.8,0.2);
\draw (5,4) -- (5.8,4.8);
\foreach \i in {2,3,4}{
\draw (\i,1) -- (\i, 0.2);
\draw (\i,4) -- (\i, 4.8);
}
\foreach \i in {2,3}{
\draw (1,\i) -- (0.2,\i);
\draw (5,\i) -- (5.8,\i);
}
\draw[fill=white, draw=none] (0.6,0.6) circle [radius=2.8mm];
\draw (0.6,0.6) node{$2$};
\draw[fill=white, draw=none] (0.6,4.4) circle [radius=2.8mm];
\draw (0.6,4.4) node{$2$};
\draw[fill=white, draw=none] (5.4,0.6) circle [radius=2.8mm];
\draw (5.4,0.6) node{$2$};
\draw[fill=white, draw=none] (5.4,4.4) circle [radius=2.8mm];
\draw (5.4,4.4) node{$2$};
\node at (3.3,-0.4){$\sg_{4\times 5}$};
\def\i{10}
\draw[style=very thick] (\i+3,1) -- (\i+3,4);
\draw (\i,2.5) .. controls +(280:20pt) and +(180:20pt) .. (\i+3,1);
\draw (\i,2.5) .. controls +(330:20pt) and +(180:20pt) .. (\i+3,2);
\draw (\i,2.5) .. controls +(30:20pt) and +(180:20pt) .. (\i+3,3);
\draw (\i,2.5) .. controls +(80:20pt) and +(180:20pt) .. (\i+3,4);

\draw[fill=white, draw=none] (\i+1,3.4) circle [radius=2.8mm];
\draw (\i+1,3.4) node{$3$};
\draw[fill=white, draw=none] (\i+1.7,2.9) circle [radius=2.8mm];
\draw (\i+1.7,2.9) node{$2$};
\draw[fill=white, draw=none] (\i+1.7,2.1) circle [radius=2.8mm];
\draw (\i+1.7,2.1) node{$2$};
\draw[fill=white, draw=none] (\i+1.1,1.53) circle [radius=2.8mm];
\draw (\i+1.1,1.53) node{$3$};

\draw[fill] (\i,2.5) circle [radius=2pt];
\node at (\i-0.5,2.5){$s$};
\node at (\i+1.5,-0.4){$\sg_{4\times 1}$};
\end{tikzpicture}
\caption{Two sandpile grid graphs. (The sink for $\sg_{4\times 5}$ is not
drawn.)}\label{fig:sandpile grid graph}
\end{figure}
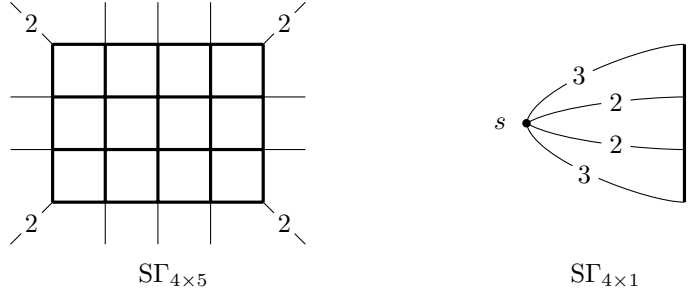

In this section, {\em symmetric recurrent} will always refer to a recurrent
element on $\sg_{m\times n}$ with horizontal and vertical symmetry, i.e., an
element of $\sand(\sg_{m\times n})^G$ where~$G$ is the Klein $4$-group,
\[
G=\langle\sigma,\tau:\sigma^2=\tau^2=1, \sigma\tau=\tau\sigma\rangle,
\]
acting on $\sg_{m\times n}$ by
\[
\sigma(i,j)=(i,n-j+1),\quad
\tau(i,j)=(m-i+1,j),\quad\mbox{and $\sigma(s)=\tau(s)=s$}.
\]

Our main goal in this section is to study the symmetric recurrent configurations
on the sandpile grid graph.  After collecting some basic facts about certain
tridiagonal matrices, we divide the study into three cases: even$\times$even-,
even$\times$odd-, and odd$\times$odd-dimensional grids.  In each case we
provide a formula for the number of symmetric recurrents using Chebyshev
polynomials and show how these configurations are related to domino tilings of
various types of checkerboards.

\subsection{Some tridiagonal matrices.}\label{subsection:tridiagonal}
Recall that {\em Chebyshev polynomials of the first kind} are defined by the
recurrence
\begin{align}\label{eqn:1st}
\nonumber
T_0(x)&=1\\
T_1(x)&=x\\
\nonumber
T_{j}(x)&=2x\,T_{j-1}(x)-T_{j-2}(x)\quad\text{for $j\geq2$},
\end{align}
and {\em Chebyshev polynomials of the second kind} are defined by
\begin{align}\label{eqn:2nd}
\nonumber
U_0(x)&=1\\
U_1(x)&=2x\\
\nonumber
U_{j}(x)&=2x\,U_{j-1}(x)-U_{j-2}(x)\quad\text{for $j\geq2$}.
\end{align}
Two references are~\cite{MH} and~\cite{wiki-chebyshev}.

It follows from the recurrences that these polynomials may be expressed as
determinants of $j\times j$ tridiagonal matrices:
\[
T_j(x) = \det
\resizebox{.3\textwidth}{!}{
$
\begin{bmatrix}
  x&1&&&&\\
  1&2x&1&&&\\
  &1&2x&1&&\\
  &&&\ddots&&\\
  &&&1&2x&1\\
  &&&&1&2x
\end{bmatrix},
$
}
\quad
U_j(x) = \det
\resizebox{.3\textwidth}{!}{
$
\begin{bmatrix}
  2x&1&&&&\\
  1&2x&1&&&\\
  &1&2x&1&&\\
  &&&\ddots&&\\
  &&&1&2x&1\\
  &&&&1&2x
\end{bmatrix},
$
}
\]
and, hence, $T_j(-x)=(-1)^j\,T_j(x)$ and $U_j(-x)=(-1)^j\,U_j(x)$.  

We have the well-known factorizations:
\begin{align}
T_j(x)&=2^{j-1}\prod_{k=1}^j\left(x-\cos\left(\frac{(2k-1)\pi}{2j} \right)
\right)\label{T-factorization}\\[5pt]
U_j(x)&=2^{j}\prod_{k=1}^j\left(x-\cos\left(\frac{k\pi}{j+1} \right) \right).
\label{U-factorization}
\end{align}
We will also use the following well-known identities:
\begin{align}
T_{2j}(x)&=T_j(2x^2-1)=(-1)^j\,T_j(1-2x^2)\label{eqn:half-angle}\\
2\,T_j(x)&= U_j(x)-U_{j-2}(x).\label{eqn:sum formula}
\end{align}

Corollary~\ref{cor:nsr} will be used to count the symmetric recurrents on
sandpile grid graphs. The form of the determinant that arises is treated by the
following.

\begin{lemma}\label{lemma:tridiagonal} Let $m$ and $n$ be positive integers.
  Let $A$, $B$, and $C$ be $n\times n$ matrices over the complex numbers, and
  let $I_n$ be the $n\times n$ identity matrix.  Define the $mn\times mn$
  tridiagonal block matrix
 \[
 D(m) =
 \left[
 \begin{array}{ccccc}
   A&-I_n&&&\\
   -I_n&A&-I_n&&\\
   &&\ddots&&\\
   &&-I_n&A&-I_n\\
   &&&-C&B
 \end{array}
 \right],
 \]
where the super- and sub-diagonal blocks are all $-I_n$ except for the
 one displayed block consisting of $-C$ and all omitted entries in the matrix are zero.  Take $D(1) = B$. Then
 \[
 \det D(m)=(-1)^n\det(T),
 \]
 where 
 \[
  T = -B\,U_{m-1}\left(\frac{1}{2}A\right)+C\,U_{m-2}\left(\frac{1}{2}A\right),
 \]
 letting $U_{-1}(x):=0$.
\end{lemma}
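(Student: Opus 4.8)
The plan is to expand $\det D(m)$ along the last block row and set up a recursion in $m$ that mirrors the Chebyshev recurrence. Write $D(m)$ for the $mn \times mn$ matrix with $A$'s on the diagonal, $-I_n$ on the off-diagonals, except that the bottom-left-most off-diagonal block is $-C$ and the bottom-right diagonal block is $B$. The key auxiliary object is the family of matrices $\widetilde D(m)$ obtained from $D(m)$ by replacing the final block pair $(-C, B)$ by the "generic" pair $(-I_n, A)$ — i.e. the honest tridiagonal block Toeplitz matrix with $A$ on the diagonal. A standard cofactor expansion along the last block row (treating the block structure carefully, or equivalently using the Schur-complement/transfer-matrix identity for block-tridiagonal matrices) gives
\begin{equation}\label{eqn:recursion}
\det \widetilde D(m) = \det\!\bigl(A\bigr)\cdot\det \widetilde D(m-1) - \det \widetilde D(m-2),
\end{equation}
where here I am abusing notation: the correct statement is the matrix identity that the "last-column-block transfer" sends $\widetilde D(m-1) \mapsto \widetilde D(m)$ via left-multiplication data $A$ and $-I_n$. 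More precisely, I will prove by induction the matrix identity
\[
\det \widetilde D(m) = \det\!\left(U_m\!\left(\tfrac12 A\right)\right),
\]
using that $U_m(\tfrac12 x) = x\,U_{m-1}(\tfrac12 x) - U_{m-2}(\tfrac12 x)$ by \eqref{eqn:2nd}, with base cases $\widetilde D(1) = A$ giving $U_1(\tfrac12 A) = A$ and (formally) $\widetilde D(0)$ giving $U_0 = I_n$. The cleanest way to make the cofactor expansion rigorous at the block level is to note that $A$, $U_{m-1}(\tfrac12 A)$, $U_{m-2}(\tfrac12 A)$ all lie in the commutative ring $\mathbb{C}[A]$, so the block matrix can be manipulated as if the blocks were scalars and one may invoke the classical fact that for a block matrix with entries in a commutative ring, the determinant equals the determinant of the "commutative determinant" — alternatively, one just works over the field $\mathbb{C}$ directly and does an honest cofactor expansion along the last scalar row, which produces \eqref{eqn:recursion} for the ordinary scalar determinants.

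Once the "generic" case is in hand, I handle the actual last block row $(-C, B)$. Expanding $\det D(m)$ along its last block row: the block $B$ multiplies the minor obtained by deleting the last block row and column, which is exactly $\det \widetilde D(m-1) = \det U_{m-1}(\tfrac12 A)$ up to sign; the block $-C$, sitting in block-position $(m, m-1)$, multiplies (with an extra sign from the one block-transposition needed to bring it to the diagonal) the minor obtained by deleting the last block row and the $(m-1)$-st block column, which unwinds — after expanding the resulting almost-upper-triangular structure created by the $-I_n$ super-diagonal blocks below row $m-1$ — to $\pm\det U_{m-2}(\tfrac12 A)$. Assembling these two contributions with the correct signs (each $-I_n$ contributes $(-1)^n$, and there are $m-1$ of them on the subdiagonal whose removal one must account for, plus the sign of the block permutation) yields
\[
\det D(m) = (-1)^n \det\!\left(-B\,U_{m-1}\!\left(\tfrac12 A\right) + C\,U_{m-2}\!\left(\tfrac12 A\right)\right),
\]
which is the claim, with the convention $U_{-1} = 0$ making the $m=1$ case $\det D(1) = \det B = (-1)^n\det(-B\,U_0) = (-1)^n\det(-B)$... — so in fact I should be careful: for $m = 1$ the stated conclusion reads $\det D(1) = (-1)^n \det(-B U_0 + C U_{-1}) = (-1)^n\det(-B) = \det B$ only when $n$ is even, so the correct bookkeeping must absorb one more sign; I expect the honest sign count to give precisely $(-1)^n\det(-B\,U_{m-1} + C\,U_{m-2})$ and I will verify the base case $m=1,2$ explicitly to pin down all signs.

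The main obstacle is exactly this sign bookkeeping: tracking the $(-1)^n$ from each $-I_n$ block, the sign of the block permutation that moves $-C$ from position $(m,m-1)$ to the diagonal, and the signs produced when the super-diagonal $-I_n$ blocks force the minor to collapse. I would manage this by (a) first proving everything for $n = 1$, where $D(m)$ is a genuine scalar tridiagonal matrix and the Chebyshev determinant formula is classical, to fix the shape of the answer; (b) then observing that for general $n$ the block cofactor expansion is formally identical provided one works in the commutative subring generated by $A$ — so the $n=1$ computation, reinterpreted with $x \rightsquigarrow A$ and each scalar determinant replaced by $\det$ of the corresponding $n\times n$ matrix, goes through verbatim, and the only genuinely new sign is the overall $(-1)^n$ coming from $\det(-I_n) = (-1)^n$ appearing in the right places. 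Steps I will not belabor: the classical factorizations and identities \eqref{T-factorization}–\eqref{eqn:sum formula}, and the elementary fact that $\det$ of a block matrix with pairwise-commuting blocks may be computed by first taking the "formal determinant" over $\mathbb{C}[A]$ and then taking $\det$ of the resulting $n\times n$ matrix.
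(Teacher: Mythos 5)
Your treatment of the Toeplitz part $\widetilde D(m)$ is fine: there all blocks ($A$, $-I_n$, $0$) lie in the commutative ring $\C[A]$, so the commuting-blocks determinant theorem legitimately gives $\det\widetilde D(m)=\det U_m(\tfrac12 A)$. The gap is in the step you actually need: the ``cofactor expansion along the last block row'' of $D(m)$, producing exactly two terms, one attached to $B$ and one to $-C$. For $n\ge 2$ this is not a valid determinant identity. The honest scalar operation is a Laplace expansion along the last $n$ rows, and since those rows are nonzero in the last $2n$ columns, it produces contributions from \emph{every} choice of $n$ columns among those $2n$ columns, i.e.\ mixed $n\times n$ minors of the $n\times 2n$ matrix $[\,-C\ \ B\,]$; these do not assemble into $\det B\cdot(\text{minor})+\det(-C)\cdot(\text{minor})$. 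Nor can you rescue the block expansion by the commutativity argument, because $B$ and $C$ are arbitrary and need not lie in $\C[A]$ -- this is precisely where ``the $n=1$ computation goes through verbatim with $x\rightsquigarrow A$'' breaks down. (Your worry about the $m=1$ base case, on the other hand, is unfounded: $(-1)^n\det(-BU_0)=(-1)^n(-1)^n\det B=\det B$ for every $n$.)

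The repair is the Schur-complement route you name in passing but do not carry out: writing $D(m)$ as a $2\times 2$ block matrix with top-left corner $\widetilde D(m-1)$, one gets $\det D(m)=\det\widetilde D(m-1)\cdot\det\bigl(B-C\,X\bigr)$, where $X$ is the bottom-right $n\times n$ block of $\widetilde D(m-1)^{-1}$; one must then compute $X=U_{m-2}(\tfrac12 A)\,U_{m-1}(\tfrac12 A)^{-1}$ (solving the block three-term recurrence, or transfer-matrix bookkeeping), multiply through using that $U_{m-1}(\tfrac12 A)$ and $U_{m-2}(\tfrac12 A)$ commute to get $\det\bigl(B\,U_{m-1}(\tfrac12 A)-C\,U_{m-2}(\tfrac12 A)\bigr)=(-1)^n\det(T)$, and finally handle the degenerate case where $U_{m-1}(\tfrac12 A)$ or $\widetilde D(m-1)$ is singular by perturbation/continuity. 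None of these three ingredients appears in your sketch, and the middle one is the real content. The paper avoids the issue by quoting a transfer-matrix theorem for block tridiagonal determinants (Theorem~2 of~\cite{Molinari}), which directly yields $\det D(m)=(-1)^n\det E_{\boldsymbol{t}}$ with $E_{\boldsymbol{t}}$ the top-left block of a product of $2n\times 2n$ transfer matrices, and then identifies that block with $-B\,U_{m-1}(\tfrac12 A)+C\,U_{m-2}(\tfrac12 A)$ via the recurrence~\eqref{eqn:srecurrence}; your argument would be complete if you replaced the block-row expansion by either that citation or the full Schur-complement computation.
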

\begin{proof} The case $m=1$ is immediate.  For $m>1$, Theorem~2 of
  \cite{Molinari} gives a formula for calculating the determinant of a general
  tridiagonal block matrix.  In our case, it says 
\begin{equation}
  \det D(m) = (-1)^n\det E_{\boldsymbol{t}},
  \label{eqn:Et}
\end{equation}
where $E_{\boldsymbol{t}}$ is the top-left block of size
$n\times n$ of the matrix
\[
E:=
\begin{bmatrix}
  -B&C\\
  I_n&0
\end{bmatrix}
\begin{bmatrix}
  A&-I_n\\
  I_n&0
\end{bmatrix}^{m-2}
\begin{bmatrix}
  A&I_n\\
  I_n&0
\end{bmatrix}.
\]
Set $S_0=I_n$, and for all positive integers $j$, define 
\[
S_j = \left( \begin{bmatrix}A& -I_n \\ I_n &
  0\end{bmatrix}^{j-1}\begin{bmatrix}A& I_n \\ I_n & 0\end{bmatrix}
    \right)_{\boldsymbol{t}}
\]
and 
\[ 
S'_j = \left (\begin{bmatrix}A& -I_n \\ I_n &
  0\end{bmatrix}^{j-1}\begin{bmatrix}A& I_n \\ I_n & 0\end{bmatrix}
    \right)_{\boldsymbol{b}}, 
\]
where the subscripts $\boldsymbol{t}$ and $\boldsymbol{b}$ denote taking the
top-left and bottom-left blocks of size $n \times n$, respectively.
It follows that 
\begin{equation}\label{eqn:srecurrence}
S_0 = I_n ,\quad S_1= A,\quad\text{and $\quad S_j= A\,S_{j-1} - S_{j-2}$ for $j\geq
2$},
\end{equation}
and 
\[
S'_j = S_{j-1}\;\,\text{for all}\;\, j \ge 1.
\]
By \eqref{eqn:2nd} and \eqref{eqn:srecurrence}, $S_{j}= U_{j} (\frac{1}{2}A)$.
Hence,
\[
E_{\boldsymbol{t}}= -B\,S_{m-1}+C\,S'_{m-1} =
-B\,U_{m-1}\left(\frac{1}{2}A\right)+C\,U_{m-2}\left(\frac{1}{2}A\right),
\]
as required.
\end{proof}
\subsection{Symmetric recurrents on a $2m\times 2n$ sandpile grid
graph.}\label{subsection:symmetric recurrents on evenxeven grid} A {\em
checkerboard} is a rectangular array of squares.  A domino is a $1\times 2$ or
$2\times 1$ array of squares and, thus, covers exactly two adjacent squares of
the checkerboard. A {\em domino tiling of the checkerboard} consists of placing
non-overlapping dominos on the checkerboard, covering every square.  As is usually
done, and exhibited in Figure~\ref{fig:perfect matching}, we identify domino
tilings of an $m\times n$ checkerboard with perfect matchings of
$\Gamma_{m\times n}$.  Figure~\ref{fig:checker4x4} exhibits the $36$ domino
tilings of a $4\times4$ checkerboard.
\begin{figure}[ht] 
\begin{tikzpicture}[scale=0.7]
\newcommand{\vertRect}[2]{\draw[fill=gray!20,rounded corners=0.4mm](#1+0.2,#2+0.2) rectangle (#1+0.8,#2+1.8);}
\newcommand{\horizRect}[2]{\draw[fill=gray!20,rounded corners=0.4mm] (#1+0.2,#2+0.2) rectangle (#1+1.8,#2+0.8);}

\draw (0,0) grid (4,3);
\horizRect{0}{0}; 
\vertRect{2}{0};
\vertRect{3}{0};
\vertRect{0}{1};
\vertRect{1}{1};
\horizRect{2}{2}; 

\foreach \i in {0,1,2,3}{
  \foreach \j in {0,1,2}{
  \draw[fill] (\i+0.5,\j+0.5) circle [radius=2pt];
  }
}
\foreach \j in {0,1,2}{
\draw[style=very thin,color=gray!90] (0.5,\j+0.5) -- (3.5,\j+0.5);
}
\foreach \i in {0,1,2,3}{
\draw[very thin,color=gray!90] (\i+0.5,0.5) -- (\i+0.5,2.5);
}
\draw[ultra thick] (0.5,0.5) -- (1.5,0.5);
\draw[ultra thick] (2.5,0.5) -- (2.5,1.5);
\draw[ultra thick] (3.5,0.5) -- (3.5,1.5);
\draw[ultra thick] (0.5,1.5) -- (0.5,2.5);
\draw[ultra thick] (1.5,1.5) -- (1.5,2.5);
\draw[ultra thick] (2.5,2.5) -- (3.5,2.5);
\end{tikzpicture}
\caption{Correspondence between a perfect matching of~$\Gamma_{3\times 4}$ and
a domino tiling of its corresponding checkerboard.}\label{fig:perfect matching}
\end{figure}

\begin{figure}[ht] 
\begin{tikzpicture}[scale=0.3]
\newcommand{\vertRect}[3]{\draw[fill=#3,rounded corners=0.4mm](#1+0.2,#2+0.2) rectangle (#1+0.8,#2+1.8);}
\newcommand{\horizRect}[3]{\draw[fill=#3,rounded corners=0.4mm] (#1+0.2,#2+0.2) rectangle (#1+1.8,#2+0.8);}
\foreach \i in {0,...,5}{
  \foreach \j in {0,...,5} { 
  \draw (5*\i,5*\j) grid (5*\i+4,5*\j+4);
  }
}

\def\j{25}
\horizRect{0}{\j+3}{gray} \horizRect{2}{\j+3}{gray};
\horizRect{0}{\j+2}{gray} \horizRect{2}{\j+2}{blue};
\horizRect{0}{\j+1}{gray} \horizRect{2}{\j+1}{gray};
\horizRect{0}{\j}{gray} \horizRect{2}{\j}{blue};

\horizRect{5}{\j+3}{gray} \vertRect{7}{\j+2}{gray};
\horizRect{5}{\j+2}{gray} \vertRect{8}{\j+2}{gray};
\horizRect{5}{\j+1}{gray} \horizRect{7}{\j+1}{gray};
\horizRect{5}{\j}{gray} \horizRect{7}{\j}{blue};

\horizRect{10}{\j+3}{gray} \vertRect{12}{\j+2}{gray};
\horizRect{10}{\j+2}{gray} \vertRect{13}{\j+2}{gray};
\horizRect{10}{\j+1}{gray} \vertRect{13}{\j}{gray};
\horizRect{10}{\j}{gray} \vertRect{12}{\j}{gray};

\horizRect{15}{\j+3}{gray} \horizRect{17}{\j+3}{gray};
\horizRect{15}{\j+2}{gray} \vertRect{17}{\j+1}{gray};
\horizRect{15}{\j+1}{gray} \vertRect{18}{\j+1}{gray};
\horizRect{15}{\j}{gray} \horizRect{17}{\j}{blue};

\horizRect{20}{\j+3}{gray} \horizRect{22}{\j+3}{gray};
\horizRect{20}{\j+2}{gray} \horizRect{22}{\j+2}{blue};
\horizRect{20}{\j+1}{gray} \vertRect{22}{\j}{gray};
\horizRect{20}{\j}{gray} \vertRect{23}{\j}{gray};

\vertRect{25}{\j+2}{gray} \horizRect{26}{\j+3}{gray};
\horizRect{26}{\j+2}{gray} \vertRect{28}{\j+2}{gray};
\horizRect{25}{\j}{gray} \horizRect{27}{\j}{blue};
\horizRect{25}{\j+1}{gray} \horizRect{27}{\j+1}{gray};

\def\j{20}
\vertRect{0}{\j+2}{gray} \horizRect{2}{\j+3}{gray};
\vertRect{1}{\j+2}{gray} \horizRect{2}{\j+2}{blue};
\horizRect{0}{\j+1}{gray} \horizRect{2}{\j+1}{gray};
\horizRect{0}{\j}{gray} \horizRect{2}{\j}{blue};

\vertRect{5}{\j+2}{gray} \vertRect{7}{\j+2}{gray};
\vertRect{6}{\j+2}{gray} \vertRect{8}{\j+2}{gray};
\horizRect{5}{\j+1}{gray} \horizRect{7}{\j+1}{gray};
\horizRect{5}{\j}{gray} \horizRect{7}{\j}{blue};

\vertRect{10}{\j+2}{gray} \vertRect{12}{\j+2}{gray};
\vertRect{11}{\j+2}{gray} \vertRect{13}{\j+2}{gray};
\horizRect{10}{\j+1}{gray} \vertRect{13}{\j}{gray};
\horizRect{10}{\j}{gray} \vertRect{12}{\j}{gray};

\vertRect{15}{\j+2}{gray} \horizRect{17}{\j+3}{gray};
\vertRect{16}{\j+2}{gray} \vertRect{17}{\j+1}{gray};
\horizRect{15}{\j+1}{gray} \vertRect{18}{\j+1}{gray};
\horizRect{15}{\j}{gray} \horizRect{17}{\j}{blue};

\vertRect{20}{\j+2}{gray} \horizRect{22}{\j+3}{gray};
\vertRect{21}{\j+2}{gray} \horizRect{22}{\j+2}{blue};
\horizRect{20}{\j+1}{gray} \vertRect{22}{\j}{gray};
\horizRect{20}{\j}{gray} \vertRect{23}{\j}{gray};

\vertRect{25}{\j+2}{gray} \horizRect{26}{\j+3}{gray};
\horizRect{26}{\j+2}{gray} \vertRect{28}{\j+2}{gray};
\horizRect{25}{\j}{gray} \vertRect{27}{\j}{gray};
\horizRect{25}{\j+1}{gray} \vertRect{28}{\j}{gray};

\def\j{15}
\vertRect{0}{\j+2}{gray} \horizRect{2}{\j+3}{gray};
\vertRect{1}{\j+2}{gray} \horizRect{2}{\j+2}{blue};
\vertRect{0}{\j}{gray} \horizRect{2}{\j+1}{gray};
\vertRect{1}{\j}{gray} \horizRect{2}{\j}{blue};

\vertRect{5}{\j+2}{gray} \vertRect{7}{\j+2}{gray};
\vertRect{6}{\j+2}{gray} \vertRect{8}{\j+2}{gray};
\vertRect{5}{\j}{gray} \horizRect{7}{\j+1}{gray};
\vertRect{6}{\j}{gray} \horizRect{7}{\j}{blue};

\vertRect{10}{\j+2}{gray} \vertRect{12}{\j+2}{gray};
\vertRect{11}{\j+2}{gray} \vertRect{13}{\j+2}{gray};
\vertRect{10}{\j}{gray} \vertRect{13}{\j}{gray};
\vertRect{11}{\j}{gray} \vertRect{12}{\j}{gray};

\vertRect{15}{\j+2}{gray} \horizRect{17}{\j+3}{gray};
\vertRect{16}{\j+2}{gray} \vertRect{17}{\j+1}{gray};
\vertRect{15}{\j}{gray} \vertRect{18}{\j+1}{gray};
\vertRect{16}{\j}{gray} \horizRect{17}{\j}{blue};

\vertRect{20}{\j+2}{gray} \horizRect{22}{\j+3}{gray};
\vertRect{21}{\j+2}{gray} \horizRect{22}{\j+2}{blue};
\vertRect{20}{\j}{gray} \vertRect{22}{\j}{gray};
\vertRect{21}{\j}{gray} \vertRect{23}{\j}{gray};

\vertRect{25}{\j+2}{gray} \horizRect{26}{\j+3}{gray};
\horizRect{26}{\j+2}{gray} \vertRect{28}{\j+2}{gray};
\vertRect{25}{\j}{gray} \horizRect{27}{\j}{blue};
\vertRect{26}{\j}{gray} \horizRect{27}{\j+1}{gray};

\def\j{10}
\horizRect{0}{\j+3}{gray} \horizRect{2}{\j+3}{gray};
\vertRect{0}{\j+1}{gray} \horizRect{2}{\j+2}{blue};
\vertRect{1}{\j+1}{gray} \horizRect{2}{\j+1}{gray};
\horizRect{0}{\j}{gray} \horizRect{2}{\j}{blue};

\horizRect{5}{\j+3}{gray} \vertRect{7}{\j+2}{gray};
\vertRect{5}{\j+1}{gray} \vertRect{8}{\j+2}{gray};
\vertRect{6}{\j+1}{gray} \horizRect{7}{\j+1}{gray};
\horizRect{5}{\j}{gray} \horizRect{7}{\j}{blue};

\horizRect{10}{\j+3}{gray} \vertRect{12}{\j+2}{gray};
\vertRect{10}{\j+1}{gray} \vertRect{13}{\j+2}{gray};
\vertRect{11}{\j+1}{gray} \vertRect{13}{\j}{gray};
\horizRect{10}{\j}{gray} \vertRect{12}{\j}{gray};

\horizRect{15}{\j+3}{gray} \horizRect{17}{\j+3}{gray};
\vertRect{15}{\j+1}{gray} \vertRect{17}{\j+1}{gray};
\vertRect{16}{\j+1}{gray} \vertRect{18}{\j+1}{gray};
\horizRect{15}{\j}{gray} \horizRect{17}{\j}{blue};

\horizRect{20}{\j+3}{gray} \horizRect{22}{\j+3}{gray};
\vertRect{20}{\j+1}{gray} \horizRect{22}{\j+2}{blue};
\vertRect{21}{\j+1}{gray} \vertRect{22}{\j}{gray};
\horizRect{20}{\j}{gray} \vertRect{23}{\j}{gray};

\horizRect{25}{\j+3}{gray} \horizRect{27}{\j+3}{gray};
\horizRect{25}{\j+2}{gray} \horizRect{27}{\j+2}{blue};
\vertRect{25}{\j}{gray} \horizRect{26}{\j}{gray};
\vertRect{28}{\j}{gray} \horizRect{26}{\j+1}{gray};

\def\j{5}
\horizRect{0}{\j+3}{gray} \horizRect{2}{\j+3}{gray};
\horizRect{0}{\j+2}{gray} \horizRect{2}{\j+2}{blue};
\vertRect{0}{\j}{gray} \horizRect{2}{\j+1}{gray};
\vertRect{1}{\j}{gray} \horizRect{2}{\j}{blue};

\horizRect{5}{\j+3}{gray} \vertRect{7}{\j+2}{gray};
\horizRect{5}{\j+2}{gray} \vertRect{8}{\j+2}{gray};
\vertRect{5}{\j}{gray} \horizRect{7}{\j+1}{gray};
\vertRect{6}{\j}{gray} \horizRect{7}{\j}{blue};

\horizRect{10}{\j+3}{gray} \vertRect{12}{\j+2}{gray};
\horizRect{10}{\j+2}{gray} \vertRect{13}{\j+2}{gray};
\vertRect{10}{\j}{gray} \vertRect{13}{\j}{gray};
\vertRect{11}{\j}{gray} \vertRect{12}{\j}{gray};

\horizRect{15}{\j+3}{gray} \horizRect{17}{\j+3}{gray};
\horizRect{15}{\j+2}{gray} \vertRect{17}{\j+1}{gray};
\vertRect{15}{\j}{gray} \vertRect{18}{\j+1}{gray};
\vertRect{16}{\j}{gray} \horizRect{17}{\j}{blue};

\horizRect{20}{\j+3}{gray} \horizRect{22}{\j+3}{gray};
\horizRect{20}{\j+2}{gray} \horizRect{22}{\j+2}{blue};
\vertRect{20}{\j}{gray} \vertRect{22}{\j}{gray};
\vertRect{21}{\j}{gray} \vertRect{23}{\j}{gray};

\horizRect{25}{\j+3}{gray} \vertRect{27}{\j+2}{gray};
\horizRect{25}{\j+2}{gray} \vertRect{28}{\j+2}{gray};
\vertRect{25}{\j}{gray} \horizRect{26}{\j}{gray};
\vertRect{28}{\j}{gray} \horizRect{26}{\j+1}{gray};

\def\j{0}
\vertRect{0}{\j}{gray} \vertRect{0}{\j+2}{gray};
\vertRect{3}{\j}{gray} \vertRect{3}{\j+2}{gray};
\horizRect{1}{\j}{gray} \vertRect{1}{\j+2}{gray};
\horizRect{1}{\j+1}{gray} \vertRect{2}{\j+2}{gray};

\vertRect{5}{\j}{gray} \vertRect{5}{\j+2}{gray};
\vertRect{8}{\j}{gray} \vertRect{8}{\j+2}{gray};
\vertRect{6}{\j}{gray} \horizRect{6}{\j+2}{gray};
\vertRect{7}{\j}{gray} \horizRect{6}{\j+3}{gray};

\vertRect{10}{\j}{gray} \vertRect{10}{\j+2}{gray};
\vertRect{13}{\j}{gray} \vertRect{13}{\j+2}{gray};
\horizRect{11}{\j}{gray} \vertRect{11}{\j+1}{gray};
\vertRect{12}{\j+1}{gray} \horizRect{11}{\j+3}{gray};

\vertRect{15}{\j}{gray} \vertRect{15}{\j+2}{gray};
\vertRect{18}{\j}{gray} \vertRect{18}{\j+2}{gray};
\horizRect{16}{\j}{gray} \horizRect{16}{\j+1}{gray};
\horizRect{16}{\j+2}{gray} \horizRect{16}{\j+3}{gray};

\horizRect{20}{\j+3}{gray} \horizRect{22}{\j+3}{gray};
\vertRect{20}{\j+1}{gray} \horizRect{21}{\j+2}{gray};
\vertRect{23}{\j+1}{gray} \horizRect{21}{\j+1}{gray};
\horizRect{20}{\j}{gray} \horizRect{22}{\j}{blue};

\horizRect{27}{\j+3}{gray} \vertRect{25}{\j+2}{gray};
\horizRect{27}{\j+2}{blue} \vertRect{26}{\j+2}{gray};
\vertRect{25}{\j}{gray} \horizRect{26}{\j}{gray};
\vertRect{28}{\j}{gray} \horizRect{26}{\j+1}{gray};
\end{tikzpicture}
\caption{The $36$ domino tilings of a $4\times 4$
checkerboard. The blue dominos are assigned weight $2$ for the purposes of
Theorem~\ref{thm2}.}\label{fig:checker4x4}
\end{figure}

Part~(\ref{thm1-4}) of the following theorem is the well-known formula due to 
Kasteleyn {\cite{Kasteleyn}} and to Temperley and Fisher~\cite{Temperley} for
the number of domino tilings of a checkerboard.  We provide a new proof.
\begin{thm}\label{thm1} Let $U_j(x)$ denote the $j$-th Chebyshev polynomial of
  the second kind, and let
  \[
  \xi_{h,d}:=\cos\left(\frac{h\pi}{2d+1}\right),  
  \]
  for all integers $h$ and $d$.
  Then for all integers $m,n\geq 1$, the following are equal:
\begin{enumerate}
  \item\label{thm1-1} the number of symmetric recurrents on $\sg_{2m\times 2n}$;
  \item\label{thm1-2} the number of domino tilings of a $2m\times 2n$ checkerboard;
  \item\label{thm1-3} \ 
    \[
    (-1)^{mn}\prod_{h=1}^m U_{2n}(i\,\xi_{h,m});
    \]
  \item\label{thm1-4} \
    \[
    \prod_{h=1}^{m}\prod_{k=1}^{n}\left(4\,\xi_{h,m}^2+4\,\xi_{k,n}^2\right).
    \]
\end{enumerate}
\end{thm}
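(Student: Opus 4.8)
The plan is to prove the three equalities $(\ref{thm1-1})=(\ref{thm1-2})$, $(\ref{thm1-1})=(\ref{thm1-3})$, and $(\ref{thm1-3})=(\ref{thm1-4})$, all resting on an explicit block form for the symmetrized reduced Laplacian $\tD^G$ of $\sg_{2m\times 2n}$ under the Klein $4$-group $G$. Assume $m,n\geq 2$ (the cases $m=1$ or $n=1$ can be checked by hand). Then $G$ acts freely on the non-sink vertices, so there are $mn$ orbits, represented by $(i,j)$ with $1\leq i\leq m$, $1\leq j\leq n$. Ordering these lexicographically by column, then row, and reading columns off the definition of $\tD^G$, I would first establish
\[
\tD^G=
\begin{bmatrix}
A&-I_m&&\\
-I_m&A&-I_m&\\
&\ddots&\ddots&\ddots\\
&&-I_m&A-I_m
\end{bmatrix}
\quad(n\ \text{block rows}),
\]
where $A$ is the $m\times m$ tridiagonal matrix with $-1$ on the off-diagonals and diagonal $(4,\dots,4,3)$. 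The point is that reflecting across the two central axes creates a new adjacency between a vertex and a reflected copy of itself only along the bottom row $i=m$ and the right column $j=n$, which accounts for the $3$ in the last slot of $A$ and for the last diagonal block being $A-I_m$.

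To prove $(\ref{thm1-1})=(\ref{thm1-2})$: Corollary~\ref{cor:nsr} identifies $(\ref{thm1-1})$ with $\det\tD^G$, and the displayed matrix is symmetric and is the reduced Laplacian of a planar graph $\Gamma'$ --- the $m\times n$ grid graph plus a sink $s$, where the top row and left column carry enough edges to $s$ to raise those vertices to degree $4$ (the common corner getting two), while the bottom row and right column carry no edges to $s$. By the matrix-tree theorem $\det\tD^G$ counts the spanning trees of $\Gamma'$ rooted at $s$ by weight; splitting each weight-$2$ corner edge into two weight-$1$ edges makes every weight $1$. The generalized Temperley bijection of Section~\ref{section:Matchings and trees} then matches these spanning trees with the perfect matchings of $\mathcal{H}(\Gamma')$, and I would check that running the Temperley construction on $\Gamma'$ yields a graph isomorphic --- as an unweighted graph --- to $\Gamma_{2m\times 2n}$: the two reflecting sides of $\Gamma'$ ``unfold'' the quarter-grid back into the full $2m\times 2n$ grid, with the vertices $t_v$, $t_e$, $t_f$ of $\mathcal{H}(\Gamma')$ filling out its four quadrants. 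As perfect matchings of $\Gamma_{2m\times 2n}$ are exactly domino tilings of the $2m\times 2n$ checkerboard, this gives $(\ref{thm1-1})=(\ref{thm1-2})$. I expect this graph isomorphism, and the check that all weights collapse to $1$, to be the main obstacle; the matrix-tree step and the block computation of $\tD^G$ are bookkeeping.

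To prove $(\ref{thm1-1})=(\ref{thm1-3})$: apply Lemma~\ref{lemma:tridiagonal} to $\tD^G$ with $B=A-I_m$ and $C=I_m$, obtaining $\det\tD^G=(-1)^m\det\!\bigl(-(A-I_m)U_{n-1}(\tfrac12 A)+U_{n-2}(\tfrac12 A)\bigr)$, which collapses via $2xU_{n-1}(x)=U_n(x)+U_{n-2}(x)$ to
\[
\det\tD^G=(-1)^m\det\!\bigl(U_{n-1}(\tfrac12 A)-U_n(\tfrac12 A)\bigr)=(-1)^m\prod_{h=1}^m\bigl(U_{n-1}(\tfrac12\mu_h)-U_n(\tfrac12\mu_h)\bigr),
\]
where $\mu_1,\dots,\mu_m$ are the eigenvalues of the symmetric matrix $A$. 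Expanding $\det(A-\lambda I_m)$ along its bottom-right corner shows it equals $U_m(x)-U_{m-1}(x)$ with $x=2-\lambda/2$; the identity $U_m(\cos\psi)-U_{m-1}(\cos\psi)=\cos\!\bigl((m+\tfrac12)\psi\bigr)/\cos(\psi/2)$ locates its roots at $\lambda=4-2\cos\frac{(2h-1)\pi}{2m+1}$, and re-indexing $h\mapsto m+1-h$ rewrites them as $\mu_h=2+4\,\xi_{h,m}^2$. Finally, from $\sin((2n+2)\theta)+\sin(2n\theta)=2\cos\theta\sin((2n+1)\theta)$ one gets the polynomial identity $U_{2n}(y)=U_n(2y^2-1)+U_{n-1}(2y^2-1)$; evaluating it at $y=i\,\xi_{h,m}$, where $2y^2-1=-\tfrac12\mu_h$, and using $U_j(-x)=(-1)^jU_j(x)$ gives $U_{n-1}(\tfrac12\mu_h)-U_n(\tfrac12\mu_h)=(-1)^{n+1}U_{2n}(i\,\xi_{h,m})$. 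Substituting and collecting signs, with $(-1)^m(-1)^{m(n+1)}=(-1)^{mn}$, gives $\det\tD^G=(-1)^{mn}\prod_{h=1}^m U_{2n}(i\,\xi_{h,m})$, which is $(\ref{thm1-3})$.

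To prove $(\ref{thm1-3})=(\ref{thm1-4})$: using the factorization~\eqref{U-factorization} and pairing the root $\cos\frac{k\pi}{2n+1}$ with $\cos\frac{(2n+1-k)\pi}{2n+1}=-\cos\frac{k\pi}{2n+1}$ gives $U_{2n}(i\xi)=(-1)^n\prod_{k=1}^n(4\xi^2+4\,\xi_{k,n}^2)$; substituting $\xi=\xi_{h,m}$ into $(\ref{thm1-3})$ and cancelling the two factors $(-1)^{mn}$ produces the double product $(\ref{thm1-4})$. This last step is routine, so the only genuinely delicate point is the Temperley-graph identification used for $(\ref{thm1-1})=(\ref{thm1-2})$.
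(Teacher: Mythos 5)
Your proposal is correct and takes essentially the same route as the paper: Corollary~\ref{cor:nsr} together with the block-tridiagonal form of $\tD^G$, the auxiliary sandpile graph (the paper's $D_{m\times n}$) with the matrix-tree theorem and the generalized Temperley bijection for (\ref{thm1-1})$=$(\ref{thm1-2}), Lemma~\ref{lemma:tridiagonal} plus Chebyshev manipulations for (\ref{thm1-1})$=$(\ref{thm1-3}), and the factorization~\eqref{U-factorization} for (\ref{thm1-3})$=$(\ref{thm1-4}); your only deviations are cosmetic --- you transpose the block structure ($m\times m$ blocks in $n$ block rows rather than $n\times n$ blocks in $m$ block rows) and, instead of the paper's recurrence-matching of $\chi_n(t_{h,m})$ with $(-1)^nU_{2n}(i\,\xi_{m-h,m})$, you diagonalize $A$ explicitly and invoke the identity $U_{2n}(y)=U_n(2y^2-1)+U_{n-1}(2y^2-1)$, which is the same computation seen from the dual side. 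One harmless slip: describing $\Gamma'$ as ``raising the top-row and left-column vertices to degree $4$'' would give the corners $(1,n)$ and $(m,1)$ two sink edges each, whereas your displayed matrix (and the paper's $D_{m\times n}$) gives them exactly one; reading $\Gamma'$ as the graph whose reduced Laplacian is the matrix you wrote, everything goes through.
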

\begin{proof}  It may be helpful to read Example~\ref{example:main1} in parallel
  with this proof.

Let $A_n=(a_{h,k})$ be the $n\times n$ tridiagonal matrix with entries
\[
a_{h,k} = 
\begin{cases}
  \hfill 4 & \quad \text{if $h=k\neq n$},\\
  \hfill 3 & \quad \text{if $h=k=n$},\\
  -1& \quad \text{if $|h-k| = 1$},\\
  \hfill 0 & \quad \text{if $|h-k|\geq 2$}. 
\end{cases}
\]
In particular, $A_1=[3]$.  Take the vertices $[m]\times[n]$ as representatives
for the orbits of $G$ acting the non-sink vertices of $\sg_{2m\times 2n}$.
Ordering these representatives lexicographically, i.e., left-to-right then
top-to-bottom, the symmetrized reduced Laplacian~\eqref{eqn:srl} is given by the
$mn\times mn$ tridiagonal block matrix
\begin{equation}\label{eqn:deltaG}
\tD^G = \begin{bmatrix}
A_n  & -I_n  &         &         & \cdots  &         & 0 \\
-I_n  & A_n  & -I_n   &         &         &         & \\
       & \ddots & \ddots  & \ddots  &         &         & \vdots \\
       &        & -I_n   & A_n   & -I_n   &         & \\
\vdots &        &         & \ddots  & \ddots  & \ddots  & \\
       &        &         &         & -I_n & A_n & -I_n  \\
0      &        & \cdots  &         &         & -I_n   & B_n\\
\end{bmatrix}
\end{equation}
where $I_n$ is the $n\times n$ identity matrix and $B_n:=A_n-I_n$.  If $m=1$,
then $\tD^G:=B_n$.
\medskip

\noindent[{\bf(\ref{thm1-1}) $=$ (\ref{thm1-2})}]: The matrix $\tD^G$ is the
reduced Laplacian of a sandpile graph we now describe.  Let $D_{m\times n}$ be
the graph obtained from $\Gamma_{m\times n}$, the ordinary grid graph, by adding
(i) a sink vertex, $s'$, (ii) an edge of weight~$2$ from the vertex $(1,1)$
to~$s'$, and (iii) edges of weight $1$ from each of the other vertices along the
left and top sides to $s'$, i.e., $\{(h,1),s'\}$ for $1<h\leq m$ and
$\{(1,k),s'\}$ for $1<k\leq n$.  We embed $D_{m\times n}$ in the plane so that
the non-sink vertices form an ordinary grid, and the edge of weight $2$ is
represented by a pair of edges of weight $1$, forming a digon.  Then,
$\mathcal{H}(D_{m\times n})=\Gamma_{2m\times 2n}$ (see Figure~\ref{fig:4x3}).

Since $\tD^G=\tD_{D_{m\times n}}$, taking determinants shows that the number of
symmetric recurrents on $\sg_{2m\times 2n}$ is equal to the size of the sandpile
group of $D_{m\times n}$, and hence to the number of spanning trees of
$D_{m\times n}$ rooted at $s'$, counted according to weight.  These spanning
trees are, in turn, in bijection with the perfect matchings of the graph
$\mathcal{H}(D_{m\times n})=\Gamma_{2m\times 2n}$ obtained from the generalized
Temperley bijection of Section~\ref{section:Matchings and trees}.  Hence, the
numbers in parts~(\ref{thm1-1}) and~(\ref{thm1-2}) are equal.  
\medskip

\noindent [{\bf(\ref{thm1-1}) $=$ (\ref{thm1-3})}]: By Corollary~\ref{cor:nsr},
$\det\tD^G$ is the number of symmetric recurrents on $\sg_{2m\times 2n}$.  By
Lemma~\ref{lemma:tridiagonal},
\begin{equation}\label{eqn:thm1-det}
\det\tD^G = (-1)^n\det(T),
\end{equation}
where
\begin{align*}
T&= -B_n\,U_{m-1}\left(\frac{A_n}{2}\right)+
                      U_{m-2}\left(\frac{A_n}{2}\right)\\[5pt]
&= -(A_n-I_n)\,U_{m-1}\left(\frac{A_n}{2}\right)+U_{m-2}\left(\frac{A_n}{2}\right)\\[5pt]
       &= U_{m-1}\left(\frac{A_n}{2}\right)-\left(A_n\,U_{m-1}\left(\frac{A_n}{2}\right) -
      U_{m-2}\left(\frac{A_n}{2}\right)\right)\\[5pt]
       &=U_{m-1}\left(\frac{A_n}{2}\right) - U_{m}\left(\frac{A_n}{2}\right).
\end{align*}

Using~(\ref{U-factorization}) and the fact that the Chebyshev polynomials of the
second kind satisfy
\[
U_j(\cos\theta) = \dfrac{\sin((j+1)\theta)}{\sin\theta},
\]
it is easy to check that the polynomial 
\[
p(x):= U_{m}\left(\frac{x}{2}\right) - U_{m-1}\left(\frac{x}{2}\right)
\]
is a monic polynomial of degree $m$ with zeros
\[
t_{h,m}:=2\cos\dfrac{(2h+1)\pi}{2m+1},\quad0\leq h\leq m-1.
\]
Thus,
\[
T= -p(A_n) = 
-\prod_{h=0}^{m-1}\left({A_n-t_{h,m} I_n}\right),
\]
and by equation~\eqref{eqn:thm1-det}, 
\[
  \det\tD^G=\prod_{h=0}^{m-1}\chi_n(t_{h,m}),
\]
where $\chi_n(x)$ is the characteristic polynomial of $A_n$.  Therefore, to show
that the expressions in parts~(\ref{thm1-1}) and~(\ref{thm1-3}) are equal, it
suffices to show that
\begin{equation}\label{eqn:claim1}
  \chi_n(t_{h,m})=(-1)^n\,U_{2n}(i\,\xi_{m-h,m})
\end{equation}
for each $h\in \{0,1,\cdots, m-1\}$, which we do by showing that both sides of the equation satisfy the same
recurrence.

Define $\chi_0(x):=1$.  Expanding the
determinant defining~$\chi_n(x)$, starting along the first row, leads to 
a recursive formula for $\chi_n(x)$:
\begin{align}\label{chi-recurrence}
\nonumber
\chi_0(x)&=1\\
\chi_1(x)&=3-x\\
\nonumber
\chi_{j}(x)&=(4-x)\chi_{j-1}(x)-\chi_{j-2}(x)\quad \text{for $j\geq2$}.
\end{align}
On the other hand, defining $C_j(x):=(-1)^j\,U_{2j}(x)$, it follows from~\eqref{eqn:2nd} that 
\begin{align}\label{c-recurrence}
\nonumber
C_0(x)&=1\\
C_1(x)&=1-4x^2\\
\nonumber
C_{j}(x)&=(2-4x^2)C_{j-1}(x)-C_{j-2}(x)\quad \text{for $j\geq2$}.
\end{align}
The result now follows by letting $x=t_{h,m}$ in~(\ref{chi-recurrence}),
letting $x=i\,\xi_{m-h,m}$ in~(\ref{c-recurrence}), and using the fact that
\begin{equation}\label{eqn:t-chi}
  t_{h,m}= 2-4\,\xi_{m-h,m}^2.
\end{equation}
(Equation~\eqref{eqn:t-chi} can be verified using, for example, the double-angle
formula for cosine and the relation among angles, $(2h+1)\pi/(2m+1)=\pi-2(m-h)\pi/(2m+1))$.
\medskip

\noindent [{\bf(\ref{thm1-3}) $=$ (\ref{thm1-4})}]:
Using~(\ref{U-factorization}),
\begin{align*}
  (-1)^{mn}&\prod_{h=1}^{m}U_{2n}(i\,\xi_{h,m})\\
  &=(-1)^{mn}\prod_{h=1}^m\prod_{k=1}^{2n}(2i\,\xi_{h,m}-2\,\xi_{k,n})\\
  &=(-1)^{mn}\prod_{h=1}^m\prod_{k=1}^{n}
  (2i\,\xi_{h,m}-2\,\xi_{k,n})
  (2i\,\xi_{h,m}+2\,\xi_{k,n})\\
  &=\prod_{h=1}^m\prod_{k=1}^{n}(4\,\xi_{h,m}^2+4\,\xi_{k,n}^2).
\end{align*}
\end{proof}

\begin{example}\label{example:symm4x4} Figure~\ref{fig:symm4x4} lists the $36$
  symmetric recurrents on $\sg_{4\times 4}$ in no particular order.  Given a
  symmetric recurrent, $c$, let $\tilde{c}$ be the restriction of $c$ to the
  vertices $(1,1)$, $(1,2)$, $(2,1)$, and $(2,2)$, representing the orbits of
  the Klein $4$-group action on~$\sg_{4\times 4}$.  We regard $\tilde{c}$ as a
  configuration on $D_{2\times2}$, the sandpile graph introduced in the proof of
  Theorem~\ref{thm1}.  Let $\iota(c)$ be the recurrent element of the sandpile
  graph $D_{2\times 2}$ equivalent to $\tilde{c}$ modulo the reduced Laplacian
  of $D_{2\times 2}$.  Then $c\mapsto\iota(c)$ determines a bijection between
  the symmetric recurrents of $\sg_{4\times 4}$ and the recurrents of
  $D_{2\times 2}$.  In~\cite{Holroyd}, it is shown that the sandpile group of a
  graph acts freely and transitively on the set of spanning trees of the graph
  rooted at the sink, i.e., this set of spanning trees is a {\em torsor} for the
  sandpile group.  Thus, via the Temperley bijection, the domino tilings of the
  $4\times 4$ checkerboard, forms a torsor for the group of symmetric recurrents
  on $\sg_{4\times 4}$.  
\end{example}

\begin{figure}[ht] 
\begin{tikzpicture}[scale=0.5]
\def\i{12};
\def\j{0};
\node at (\i+0.5,\j+0.95){\# grains};
\node at (0,0){\includegraphics[height=2.5in]{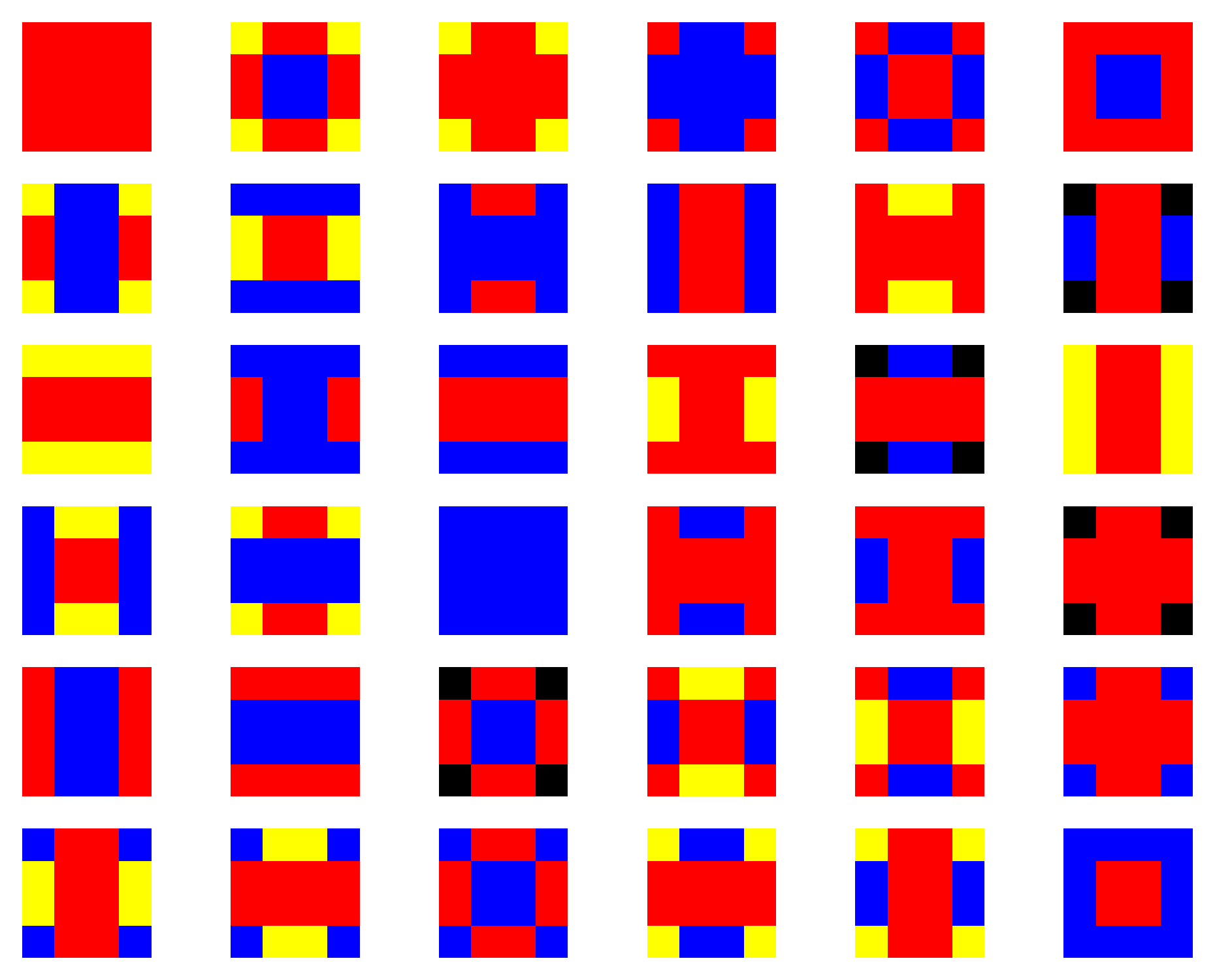}};
\draw[fill=black] (\i,\j) rectangle (\i+0.5,\j-0.5);
\node at (\i+1.4,\j-0.25){$=0$};
\draw[fill=yellow] (\i,\j-1) rectangle (\i+0.5,\j-1.5);
\node at (\i+1.4,\j-1.25){$=1$};
\draw[fill=blue] (\i,\j-2) rectangle (\i+0.5,\j-2.5);
\node at (\i+1.4,\j-2.25){$=2$};
\draw[fill=red] (\i,\j-3) rectangle (\i+0.5,\j-3.5);
\node at (\i+1.4,\j-3.25){$=3$};
\end{tikzpicture}
\caption{The $36$ symmetric recurrents on $\sg_{4\times4}$.}\label{fig:symm4x4}
\end{figure}

\begin{example}\label{example:main1}  This example illustrates part of the proof
  of Theorem~\ref{thm1} for the case $m=4$ and $n=3$.  Figure~\ref{fig:8x6}
  shows the graph $\sg_{8\times 6}$.  The boxed $4\times 3$ block of vertices in
  the upper left are representatives of the orbits of the Klein $4$-group
  action.  Order these from left-to-right, top-to-bottom, to get the matrix for
  the symmetrized reduced Laplacian, $\tD^G_{8\times 6}$.  The vertex $(2,3)$ of
  $\sg_{8\time6}$ in Figure~\ref{fig:8x6} is colored blue.  If this vertex is
  fired simultaneously with the other vertices in its orbit, it will lose $4$
  grains of sand to its neighbors but gain $1$ grain of sand from the adjacent
  vertex in its orbit.  This firing-rule is encoded in the sixth column of
  $\tD^G_{8\times6}$ (shaded blue).

\begin{figure}[ht] 
\begin{tikzpicture}[scale=0.5]

\begin{scope}[shift={(0,-1.8)}]
\draw (0,0) grid (5,7);

\draw[line width=0.6mm,color=green!70] (-0.3,7.3) -- (2.3,7.3) -- (2.3,3.7) -- (-0.3,3.7) -- (-0.3,7.3);

\fill[color=blue] (2,6) circle (3pt);
\fill[color=blue] (3,6) circle (3pt);
\fill[color=blue] (2,2) circle (3pt);
\fill[color=blue] (3,2) circle (3pt);
\end{scope}

\draw (16,3.5) node(symlap){
\resizebox{0.67\columnwidth}{!}{
$
\renewcommand{\arraystretch}{1.3}
\left[
\begin{array}{rrrrrrrrrrrr}
  4&-1& 0&-1& 0& 0& 0& 0& 0& 0& 0& 0\\
 -1& 4&-1& 0&-1& 0& 0& 0& 0& 0& 0& 0\\
  0&-1& 3& 0& 0&-1& 0& 0& 0& 0& 0& 0\\
 -1& 0& 0& 4&-1& 0&-1& 0& 0& 0& 0& 0\\
  0&-1& 0&-1& 4&-1& 0&-1& 0& 0& 0& 0\\
  0& 0&-1& 0&-1& 3& 0& 0&-1& 0& 0& 0\\
  0& 0& 0&-1& 0& 0& 4&-1& 0&-1& 0& 0\\
  0& 0& 0& 0&-1& 0&-1& 4&-1& 0&-1& 0\\
  0& 0& 0& 0& 0&-1& 0&-1& 3& 0& 0&-1\\
  0& 0& 0& 0& 0& 0&-1& 0& 0& 3&-1& 0\\
  0& 0& 0& 0& 0& 0& 0&-1& 0&-1& 3&-1\\
  0& 0& 0& 0& 0& 0& 0& 0&-1& 0&-1& 2\\
\end{array}
\right]
$
}
};

\fill[color=blue!30,opacity=0.4] (14.85,-1.9) rectangle (15.95,8.9);

\foreach \i in {1,2,3} {
  \draw[line width=0.02cm, dotted] (8.0,2.72*\i-1.95) --(24,2.72*\i-1.95);
}
\draw[line width=0.02cm, dotted] (12.2,-1.8) --(12.2,8.8);
\draw[line width=0.02cm, dotted] (16.15,-1.8) --(16.15,8.8);
\draw[line width=0.02cm, dotted] (20.1,-1.8) --(20.1,8.8);

\draw (2.5,-3) node(a) {$\sg_{8\times 6}$};
\draw (16,-3) node(b) {$\tD^{G}_{8\times6}$};

\end{tikzpicture}
\caption{A sandpile grid graph and its symmetrized reduced Laplacian.}\label{fig:8x6}
\end{figure}
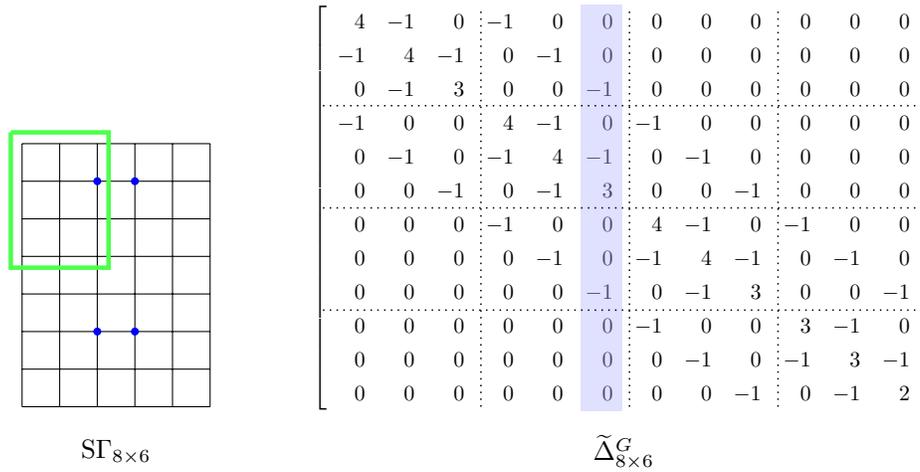

The matrix $\tD^G_{8\times 6}$ is the reduced Laplacian of the graph $D_{4\times
3}$, shown in Figure~\ref{fig:4x3}.  To form $\mathcal{H}(D_{4\times
3})=\Gamma_{8\times 6}$, we first overlay $D_{4\times 3}$ with its dual, as
shown, then remove the vertices $s$ and $\tilde{s}$ and their incident edges.
Figure~\ref{fig:entwined} shows how a spanning tree of $D_{4\times 3}$ (in black)
determines a spanning tree of the dual graph (in blue) and a domino tiling of
the $8\times 6$ checkerboard.

\begin{figure}[ht] 
\begin{tikzpicture}[scale=0.8]

\foreach \i in {0,1,2} {
\draw (\i,0) -- (\i,3.6);
\draw (\i,3.6) .. controls +(90:20pt) and +(0:20pt) .. (-1.60,4.6);
}
\foreach \i in {0,1,2,3} {
\draw (-0.60,\i) -- (2,\i);
\draw (-0.60,\i) .. controls +(180:20pt) and +(270:20pt) .. (-1.60,4.6);
}

\draw (-2,4.6) node(s){$s$};
\draw (1,-2) node(d){$D_{4\times3}$};

\def\j{7};
\foreach \i in {0,1,2} {
\draw (\i+\j,0) -- (\i+\j,3.6);
\draw (\i+\j,3.6) .. controls +(90:20pt) and +(0:20pt) .. (-1.60+\j,4.6);
}
\foreach \i in {0,1,2,3} {
\draw (-0.60+\j,\i) -- (2+\j,\i);
\draw (-0.60+\j,\i) .. controls +(180:20pt) and +(270:20pt) .. (-1.60+\j,4.6);
}
\draw (-2+\j,4.6) node(s){$s$};
\foreach \i in {0,1,2} {
\draw[color=blue] (\i+\j-0.5,-0.2) -- (\i+\j-0.5,3.5);
\draw[color=blue] (\i+\j-0.5,-0.2) .. controls +(270:20pt) and +(180:20pt) .. (3.1+\j,-1.2);
}
\foreach \i in {0,1,2,3} {
\draw[color=blue] (2.10+\j,\i+0.5) -- (-0.50+\j,\i+0.5);
\draw[color=blue] (2.10+\j,\i+0.5) .. controls +(0:20pt) and +(90:20pt) .. (3.1+\j,-1.2);
}
\draw (3.1+\j+0.4,-1.2) node(s){$\color{blue}\tilde{s}$};
\draw (1+\j,-2) node(e){$D_{4\times3}\cup D_{4\times3}^{\perp}$};
\end{tikzpicture}
\caption{The symmetrized reduced Laplacian for $\sg_{8\times 6}$ is the reduced
Laplacian for $D_{4\times 3}$. Removing $s$ and $\tilde{s}$ and their incident
edges from the graph on the right shows $\mathcal{H}(D_{4\times
3})=\Gamma_{8\times 6}$.}\label{fig:4x3}
\end{figure}
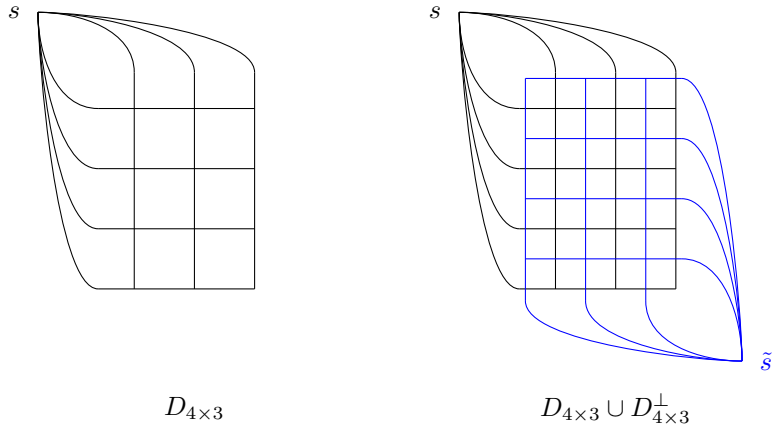
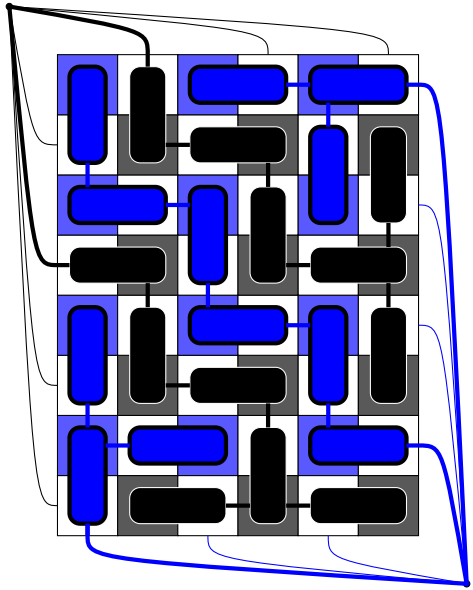
\begin{figure}[ht] 
\begin{tikzpicture}[scale=0.8]

\foreach \i in {0,2,4} {
  \foreach \j in {0,2,4,6} {
  \draw[fill=white] (\i,\j) rectangle (\i+1,\j+1);
  \draw[fill=black!65] (\i+1,\j) rectangle (\i+2,\j+1);
  \draw[fill=blue!65] (\i,\j+1) rectangle (\i+1,\j+2);
  \draw[fill=white] (\i+1,\j+1) rectangle (\i+2,\j+2);
  }
}

\filldraw[draw=black,style=ultra thick,fill=blue,rounded corners] (0.2,0.2) rectangle (0.8,1.8);
\filldraw[draw=black,style=ultra thick,fill=blue,rounded corners] (0.2,2.2) rectangle (0.8,3.8);
\filldraw[draw=black,style=ultra thick,fill=blue,rounded corners] (0.2,5.2) rectangle (1.8,5.8);
\filldraw[draw=black,style=ultra thick,fill=blue,rounded corners] (0.2,6.2) rectangle (0.8,7.8);
\filldraw[draw=white,fill=black,rounded corners] (1.2,6.2) rectangle (1.8,7.8);
\filldraw[draw=white,fill=black,rounded corners] (0.2,4.2) rectangle (1.8,4.8);
\filldraw[draw=white,fill=black,rounded corners] (1.2,2.2) rectangle (1.8,3.8);
\filldraw[draw=black,style=ultra thick,fill=blue,rounded corners] (1.2,1.2) rectangle (2.8,1.8);
\filldraw[draw=white,fill=black,rounded corners] (1.2,0.2) rectangle (2.8,0.8);
\filldraw[draw=black,style=ultra thick,fill=blue,rounded corners] (2.2,7.2) rectangle (3.8,7.8);
\filldraw[draw=white,fill=black,rounded corners] (2.2,6.2) rectangle (3.8,6.8);
\filldraw[draw=white,fill=black,rounded corners] (3.2,4.2) rectangle (3.8,5.8);
\filldraw[draw=black,style=ultra thick,fill=blue,rounded corners] (2.2,4.2) rectangle (2.8,5.8);
\filldraw[draw=black,style=ultra thick,fill=blue,rounded corners] (2.2,3.2) rectangle (3.8,3.8);
\filldraw[draw=white,fill=black,rounded corners] (2.2,2.2) rectangle (3.8,2.8);
\filldraw[draw=white,fill=black,rounded corners] (3.2,0.2) rectangle (3.8,1.8);
\filldraw[draw=white,fill=black,rounded corners] (4.2,0.2) rectangle (5.8,0.8);
\filldraw[draw=black,style=ultra thick,fill=blue,rounded corners] (4.2,7.2) rectangle (5.8,7.8);
\filldraw[draw=black,style=ultra thick,fill=blue,rounded corners] (4.2,5.2) rectangle (4.8,6.8);
\filldraw[draw=white,fill=black,rounded corners] (5.2,5.2) rectangle (5.8,6.8);
\filldraw[draw=white,fill=black,rounded corners] (4.2,4.2) rectangle (5.8,4.8);
\filldraw[draw=black,style=ultra thick,fill=blue,rounded corners] (4.2,2.2) rectangle (4.8,3.8);
\filldraw[draw=white,fill=black,rounded corners] (5.2,2.2) rectangle (5.8,3.8);
\filldraw[draw=black,style=ultra thick,fill=blue,rounded corners] (4.2,1.2) rectangle (5.8,1.8);

\draw[fill=black] (-0.8,8.8) circle (1.5pt);

\draw[style=ultra thick] (-0.8,8.8) .. controls (1.5,8.4) .. (1.5,8);
\draw[style=thin] (-0.8,8.8) .. controls (3.5,8.4) .. (3.5,8);
\draw[style=thin] (-0.8,8.8) .. controls (5.5,8.4) .. (5.5,8);

\draw[style=thin] (-0.8,8.8) .. controls (-0.4,6.5) .. (0,6.5);
\draw[style=ultra thick] (-0.8,8.8) .. controls (-0.4,4.5) .. (0,4.5);
\draw[style=thin] (-0.8,8.8) .. controls (-0.4,2.5) .. (0,2.5);
\draw[style=thin] (-0.8,8.8) .. controls (-0.4,0.5) .. (0,0.5);

\draw[style=ultra thick] (1.5,8) -- (1.5,7.8);
\draw[style=ultra thick] (1.8,6.5) -- (2.2,6.5);
\draw[style=ultra thick] (3.5,6.2) -- (3.5,5.8);
\draw[style=ultra thick] (3.8,4.5) -- (4.2,4.5);
\draw[style=ultra thick] (5.5,4.8) -- (5.5,5.2);
\draw[style=ultra thick] (5.5,4.2) -- (5.5,3.8);
\draw[style=ultra thick] (0.0,4.5) -- (0.2,4.5);
\draw[style=ultra thick] (1.5,4.2) -- (1.5,3.8);
\draw[style=ultra thick] (1.8,2.5) -- (2.2,2.5);
\draw[style=ultra thick] (3.5,2.2) -- (3.5,1.8);
\draw[style=ultra thick] (2.8,0.5) -- (3.2,0.5);
\draw[style=ultra thick] (3.8,0.5) -- (4.2,0.5);

\draw[fill=blue] (6.8,-0.8) circle (1.5pt);

\draw[style=ultra thick,color=blue] (6.8,-0.8) .. controls (6.4,1.5) .. (6,1.5);
\draw[style=thin,color=blue] (6.8,-0.8) .. controls (6.4,3.5) .. (6,3.5);
\draw[style=thin,color=blue] (6.8,-0.8) .. controls (6.4,5.5) .. (6,5.5);
\draw[style=ultra thick,color=blue] (6.8,-0.8) .. controls (6.4,7.5) .. (6,7.5);

\draw[style=thin,color=blue] (6.8,-0.8) .. controls (4.5,-0.4) .. (4.5,0);
\draw[style=thin,color=blue] (6.8,-0.8) .. controls (2.5,-0.4) .. (2.5,0);
\draw[style=ultra thick,color=blue] (6.8,-0.8) .. controls (0.5,-0.4) .. (0.5,0);

\draw[style=ultra thick,color=blue] (0.5,6.2) -- (0.5,5.8);
\draw[style=ultra thick,color=blue] (1.8,5.5) -- (2.2,5.5);
\draw[style=ultra thick,color=blue] (2.5,4.2) -- (2.5,3.8);
\draw[style=ultra thick,color=blue] (3.8,3.5) -- (4.2,3.5);
\draw[style=ultra thick,color=blue] (4.5,2.2) -- (4.5,1.8);
\draw[style=ultra thick,color=blue] (5.8,1.5) -- (6.0,1.5);
\draw[style=ultra thick,color=blue] (0.5,2.2) -- (0.5,1.8);
\draw[style=ultra thick,color=blue] (0.8,1.5) -- (1.2,1.5);
\draw[style=ultra thick,color=blue] (0.5,0.2) -- (0.5,0.0);
\draw[style=ultra thick,color=blue] (0.5,0.2) -- (0.5,0.0);
\draw[style=ultra thick,color=blue] (3.8,7.5) -- (4.2,7.5);
\draw[style=ultra thick,color=blue] (5.8,7.5) -- (6.0,7.5);
\draw[style=ultra thick,color=blue] (4.5,7.2) -- (4.5,6.8);
\end{tikzpicture}
\caption{Every domino tiling of an even-sided checkerboard consists of a spanning
tree entwined with its dual spanning tree.}\label{fig:entwined}
\end{figure}
\end{example}

\subsection{Symmetric recurrents on a $2m\times (2n-1)$ sandpile grid
graph.}\label{subsection:symmetric recurrents on evenxodd grid} The {\em
$m\times n$ M\"obius grid graph}, $\mg_{m\times n}$, is the graph formed from
the ordinary $m\times n$ grid graph, $\Gamma_{m\times n}$, by adding the edges
$\{(h,1),(m-h+1,n)\}$ for $1\leq h\leq m$.  A {\em M\"obius checkerboard} is an
ordinary checkerboard with its left and right sides glued with a twist.  Domino
tilings of an $m\times n$ M\"obius checkerboard are identified with perfect
matchings of $\mg_{m\times n}$.  See Figure~\ref{fig:Moebius} for examples. 
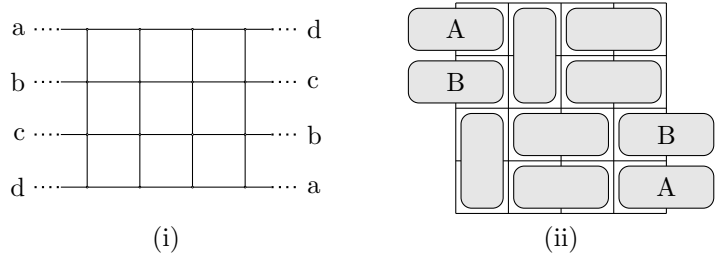
\begin{figure}[ht] 
\begin{tikzpicture}[scale=0.7]

\SetVertexMath
\GraphInit[vstyle=Art]
\SetUpVertex[MinSize=3pt]
\SetVertexLabel
\tikzset{VertexStyle/.style = {%
shape = circle,
shading = ball,
ball color = black,
inner sep = 0pt
}}
\SetUpEdge[color=black]
\foreach \i in {0,...,3} {
  \draw (\i,0) -- (\i,3);
  \draw (-0.5,\i) -- (3.5,\i);
  \draw[dotted, thick] (3.5,\i) -- (4.0,\i);
  \draw[dotted, thick] (-1.0,\i) -- (-0.5,\i);
}
\foreach \i in {0,1,2,3}{
  \foreach \j in {0,1,2,3}{
    \Vertex[NoLabel,x=\i,y=\j]{}
  }
}

\node [right] at (4,3){d};
\node [left] at (-1.0,0){d};
\node [right] at (4,2){c};
\node [left] at (-1.0,1){c};
\node [right] at (4,1){b};
\node [left] at (-1.0,2){b};
\node [right] at (4,0){a};
\node [left] at (-1.0,3){a};

\node at (1.5,-1) {(i)};

\def\xoff{7}
\def\yoff{-0.5}
\foreach \i in {0,...,4} {
  \draw (\xoff+\i,\yoff+0) -- (\xoff+\i,\yoff+4);
  \draw (\xoff+0,\yoff+\i) -- (\xoff+4,\yoff+\i);
}
\def\l{0.9}
\def\s{0.1}
\draw[fill=black!10,rounded corners] (\xoff-\l,\yoff+3+\s) rectangle (\xoff+\l,\yoff+3+\l);
\draw[fill=black!10,rounded corners] (\xoff-\l,\yoff+2+\s) rectangle (\xoff+\l,\yoff+2+\l);
\draw[fill=black!10,rounded corners] (\xoff+1+\s,\yoff+2+\s) rectangle (\xoff+1+\l,\yoff+3+\l);
\draw[fill=black!10,rounded corners] (\xoff+2+\s,\yoff+2+\s) rectangle (\xoff+3+\l,\yoff+2+\l);
\draw[fill=black!10,rounded corners] (\xoff+2+\s,\yoff+3+\s) rectangle (\xoff+3+\l,\yoff+3+\l);
\draw[fill=black!10,rounded corners] (\xoff+\s,\yoff+\s) rectangle (\xoff+\l,\yoff+1+\l);
\draw[fill=black!10,rounded corners] (\xoff+1+\s,\yoff+\s) rectangle (\xoff+2+\l,\yoff+\l);
\draw[fill=black!10,rounded corners] (\xoff+1+\s,\yoff+1+\s) rectangle (\xoff+2+\l,\yoff+1+\l);
\draw[fill=black!10,rounded corners] (\xoff+3+\s,\yoff+\s) rectangle (\xoff+4+\l,\yoff+\l);
\draw[fill=black!10,rounded corners] (\xoff+3+\s,\yoff+1+\s) rectangle (\xoff+4+\l,\yoff+1+\l);
\node at (\xoff+0,\yoff+3.5) {A};
\node at (\xoff+0,\yoff+2.5) {B};
\node at (\xoff+4,\yoff+0.5) {A};
\node at (\xoff+4,\yoff+1.5) {B};

\node at (\xoff+2,\yoff-0.5) {(ii)};
\end{tikzpicture}
\caption{(i) The $4\times4$ M\"obius grid graph,
$\mg_{4\times4}$; (ii) A tiling of the $4\times4$ M\"obius
checkerboard.}\label{fig:Moebius}
\end{figure}

As part of Theorem~\ref{thm2}, we will show that the domino tilings of a
$2m\times 2n$ M\"obius checkerboard can be counted using weighted domino tilings
of an associated ordinary checkerboard, which we now describe. Define the {\em
M\"obius-weighted $m\times n$ grid graph}, $\tg_{m\times n}$, as the ordinary
$m\times n$ grid graph but with each edge of the form $\{(m-2h,n-1),(m-2h,n)\}$
for $0\leq h<\lfloor\frac{m}{2}\rfloor$ assigned the weight~$2$, and, if~$m$
is odd, then in addition assign the edge $\{(1,n-1),(1,n)\}$ the weight~$3$ (and all
other edges have weight~$1$). (In the case $m=1$, the weight of the edge
$\{(1,n-1),(1,n)\}$ is be defined to be $3$.)   
See Figure~\ref{fig:mobius-weighted} for examples. 
\begin{figure}[ht] 
\begin{tikzpicture}[scale=0.5]
\draw (0,0) grid(3,3);
\foreach \j in {0,2}{
  \draw[fill=white, draw=none] (2.5,\j) circle [radius=2.1mm];
  \draw (2.5,\j) node{$2$};
}
\node at (1.8,-1){$\tg_{4\times 4}$}; 

\def\i{8}
\draw (\i+0,0) grid(\i+1,4);
\foreach \j in {0,2}{
  \draw[fill=white, draw=none] (\i+0.5,\j) circle [radius=2.8mm];
  \draw (\i+0.5,\j) node{$2$};
}
\draw[fill=white, draw=none] (\i+0.5,4) circle [radius=2.8mm];
\draw (\i+0.5,4) node{$3$};
\node at (\i+0.8,-1){$\tg_{5\times 2}$}; 
\end{tikzpicture}
\caption{Two M\"obius-weighted grid graphs.}\label{fig:mobius-weighted}
\end{figure}
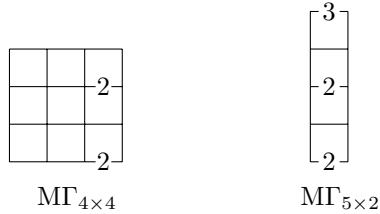
The {\em M\"obius-weighted $m\times n$ checkerboard} is the ordinary $m\times n$
checkerboard but for which the weight of a domino tiling is taken to be the
weight of the corresponding perfect matching of $\tg_{m\times n}$.  In
Figure~\ref{fig:checker4x4}, the dominos corresponding to edges of weight~$2$
are shaded.  Thus, the first three tilings in the first row of
Figure~\ref{fig:checker4x4} have weights $4$, $2$, and $1$, respectively.
Example~\ref{example:mobius 3x1} considers a case for which $m$ is odd.

\begin{thm}\label{thm2} Let $T_j(x)$ denote the $j$-th Chebyshev polynomial of
  the first kind, and let
  \[
  \xi_{h,d}:=\cos\left(\frac{h\pi}{2d+1}\right)\quad\text{and}\quad
  \zeta_{h,d}:=\cos\left(\frac{(2h-1)\pi}{4d}\right)
  \]
  for all integers $h$ and $d\neq0$.
  Then for all integers $m,n\geq 1$, the following are equal:
\begin{enumerate}
  \item\label{thm2-1} the number of symmetric recurrents on $\sg_{2m\times(2n-1)}$;
  \item\label{thm2-2} if $n>1$, the number of domino tilings of the
    M\"obius-weighted $2m\times 2n$ checkerboard, and if $n=1$, the number of
    domino tilings of the M\"obius-weighted $(2m-1)\times 2$ checkerboard, 
    counted according to weight;
  \item\label{thm2-3} \ 
    \[
    (-1)^{mn}\,2^m\prod_{h=1}^m T_{2n}(i\,\xi_{h,m});
    \]
  \item\label{thm2-4} \
    \[
    \prod_{h=1}^{m}\prod_{k=1}^{n}\left(4\,\xi_{h,m}^2+4\,\zeta_{k,n}^2\right);
    \]
  \item\label{thm2-5} the number of domino tilings of a $2m\times 2n$
    M\"obius checkerboard.
\end{enumerate}
\end{thm}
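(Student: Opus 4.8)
The plan is to run the four-step argument used for Theorem~\ref{thm1}, the only genuinely new ingredient being the passage from the M\"obius-weighted checkerboard to the honest M\"obius checkerboard.

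First I would identify the symmetrized reduced Laplacian. Take $[m]\times[n]$ as orbit representatives, ordered lexicographically. The only structural change from the $2m\times 2n$ case is that the central column $j=n$ is pointwise fixed by $\sigma$, so the orbit of $(i,n)$ has just two elements; when it fires, the grain it pushes to $(i,n+1)=\sigma(i,n-1)$ is registered against the representative $(i,n-1)$. Reading off the firing rule, $\tD^G$ is the $mn\times mn$ tridiagonal block matrix of the shape treated in Lemma~\ref{lemma:tridiagonal} with off-diagonal blocks $-I_n$, with $C=I_n$, $B=A_n-I_n$, and
\[
A_n=\begin{bmatrix}
4&-1&&&\\
-1&4&-1&&\\
&\ddots&\ddots&\ddots&\\
&&-1&4&-1\\
&&&-2&4
\end{bmatrix},
\]
that is, the ordinary grid block \emph{except} that the sub-diagonal entry in the last row is $-2$ (and the last diagonal entry is $4$, not $3$); for $m=1$ one has $\tD^G=B_n$. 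As in Theorem~\ref{thm1}, $\tD^G$ is then the reduced Laplacian of a planar (directed, weighted) graph $D$ obtained from $\Gamma_{m\times n}$ by replacing the rungs between columns $n-1$ and $n$ by oppositely directed edges of weights $2$ and $1$ and adjoining a sink joined to the left and top boundary by suitable edges (the corner edge drawn as a digon, exactly as in Theorem~\ref{thm1}); applying the matrix--tree theorem and the generalized Temperley bijection of Section~\ref{section:Matchings and trees} shows that the number of symmetric recurrents equals the weighted number of perfect matchings of the associated graph $\mathcal{H}(D)$, which one recognizes as $\tg_{2m\times 2n}$ when $n>1$ and as $\tg_{(2m-1)\times 2}$ when $n=1$. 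By definition this is the quantity in $(\ref{thm2-2})$, so $(\ref{thm2-1})=(\ref{thm2-2})$.

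For $(\ref{thm2-1})=(\ref{thm2-3})$ I would use Corollary~\ref{cor:nsr} and Lemma~\ref{lemma:tridiagonal}: $\det\tD^G=(-1)^n\det T$ with $T=-B\,U_{m-1}(\tfrac12 A_n)+U_{m-2}(\tfrac12 A_n)=U_{m-1}(\tfrac12 A_n)-U_m(\tfrac12 A_n)=-p(A_n)$, where exactly as in Theorem~\ref{thm1} the polynomial $p(x)=U_m(\tfrac x2)-U_{m-1}(\tfrac x2)$ is monic of degree $m$ with zeros $t_{h,m}=2\cos\frac{(2h+1)\pi}{2m+1}$; hence $\det\tD^G=\prod_{h=0}^{m-1}\chi_n(t_{h,m})$, $\chi_n$ being the characteristic polynomial of $A_n$. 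The key computation is now that expanding $\det(A_n-xI_n)$ along the last column (the $-2$ contributing a factor $2$) gives $\chi_n(x)=(4-x)\,U_{n-1}(\tfrac{4-x}{2})-2\,U_{n-2}(\tfrac{4-x}{2})=2\,T_n(\tfrac{4-x}{2})$, using the identity $T_j(y)=y\,U_{j-1}(y)-U_{j-2}(y)$ (immediate from~\eqref{eqn:2nd} and~\eqref{eqn:sum formula}); so the first-kind Chebyshev polynomials (with an extra factor $2$) take over here the role played by the second-kind ones in Theorem~\ref{thm1}. Since $\tfrac{4-t_{h,m}}{2}=1+2\,\xi_{m-h,m}^2$ by~\eqref{eqn:t-chi}, and the half-angle identity~\eqref{eqn:half-angle} together with $T_n(-y)=(-1)^nT_n(y)$ gives $T_{2n}(i\,\xi)=T_n(2(i\xi)^2-1)=(-1)^nT_n(1+2\xi^2)$, one gets $\chi_n(t_{h,m})=2(-1)^nT_{2n}(i\,\xi_{m-h,m})$; multiplying over $h$ and reindexing $h\mapsto m-h$ produces the expression in $(\ref{thm2-3})$.

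The equality $(\ref{thm2-3})=(\ref{thm2-4})$ is the same trigonometric bookkeeping as in Theorem~\ref{thm1}: factor $T_{2n}$ by~\eqref{T-factorization}, pair its zeros $\cos\frac{(2k-1)\pi}{4n}$ with their negatives to rewrite $T_{2n}(i\,\xi_{h,m})$ as $2^{2n-1}(-1)^n\prod_{k=1}^n(\xi_{h,m}^2+\zeta_{k,n}^2)$, then collect signs and powers of $2$. Finally, for $(\ref{thm2-4})=(\ref{thm2-5})$, I would observe that the resulting double product is (a slight re-writing of) the formula of Lu and Wu~\cite{LW} for the number of domino tilings of a $2m\times 2n$ checkerboard on a M\"obius strip, so comparing the two expressions closes the chain; the $n=1$ case runs identically, the odd first dimension of $\tg_{(2m-1)\times 2}$ being precisely what forces the weight-$3$ edge there. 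The step I expect to be the main obstacle is this last one: unlike the even$\times$even case there is no ``entwined spanning trees'' bijection on the M\"obius strip, and although $\mg_{2m\times 2n}$ carries the fixed-point-free involution $(i,j)\mapsto(2m+1-i,\,2n+1-j)$, the edges it flips --- notably the wrap-around edges, whose two endpoints it interchanges --- do not descend to the matching problem transparently, so one is essentially forced either to cite~\cite{LW} or to reprove the M\"obius count by a Kasteleyn/Pfaffian computation (whose sign subtleties are the real work). A secondary point demanding care is getting $A_n$ exactly right: the asymmetry ``last sub-diagonal entry $-2$, last super-diagonal entry $-1$'' is essential --- it is what turns $\chi_n$ into $2\,T_n(\tfrac{4-x}{2})$ --- and a symmetric guess would give the wrong polynomial.
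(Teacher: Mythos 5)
Your proposal follows essentially the paper's route: the same block structure (your $A_n$ is the paper's $A'_n$, with $B=A'_n-I_n$ and $C=I_n$), Corollary~\ref{cor:nsr} plus Lemma~\ref{lemma:tridiagonal} for part~(\ref{thm2-3}), the generalized Temperley bijection for part~(\ref{thm2-2}), the pairing of the zeros of $T_{2n}$ for part~(\ref{thm2-4}), and a citation of Lu--Wu for part~(\ref{thm2-5}). Your closed-form evaluation $\chi_n(x)=(4-x)\,U_{n-1}(\tfrac{4-x}{2})-2\,U_{n-2}(\tfrac{4-x}{2})=2\,T_n(\tfrac{4-x}{2})$ is correct and is a slightly more direct variant of the paper's argument, which instead checks that both sides satisfy the recurrence $\chi_j=(4-x)\chi_{j-1}-\chi_{j-2}$ with $\chi_0=2$.

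Two points need repair. First, for $n>1$ the matrix $\tD^G$ is \emph{not} the reduced Laplacian of any sandpile graph: the penultimate column (the one carrying the $-2$) sums to $-1$, which would force a negative edge weight to the sink. It is the transpose $(\tD^G)^t$ that is the reduced Laplacian of the graph you describe (the paper's $D'_{m\times n}$, with the weight-$2$ edges directed from column $n$ toward column $n-1$); since $\det\tD^G=\det(\tD^G)^t$, your conclusion for (\ref{thm2-1})$=$(\ref{thm2-2}) survives, but the claim as written is false. Relatedly, your verbal derivation of the $-2$ (``the grain pushed to $(i,n+1)$ is registered against $(i,n-1)$'') describes the transposed entry: by the definition of $\tD^G$ only the grain arriving at the representative itself is recorded, and the $-2$ sits in the column of $G(i,n-1)$ because firing that orbit deposits two grains on $(i,n)$, one from $(i,n-1)$ and one from $(i,n+1)$. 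Your displayed matrix is nevertheless the correct one. Second, in (\ref{thm2-4})$=$(\ref{thm2-5}), ``comparing the two expressions'' still requires showing that $\{\zeta_{k,n}^2\}_{k=1}^n$ coincides, as a multiset, with $\{\sin^2((4k-1)\pi/(4n))\}_{k=1}^n$ appearing in the Lu--Wu formula; the paper does this via an explicit permutation (treating $n$ even and odd separately). Citing Lu--Wu is exactly what the paper does, so no Pfaffian computation is needed, but this small trigonometric matching must be carried out.
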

\begin{remark}\label{remark:chebT}
  By identity~(\ref{eqn:half-angle}),
  \[
  T_{2n}(i\,\xi_{h,m})=(-1)^n\,T_n(1+2\,\xi_{h,m}^2),
  \]
  from which it follows, after proving Theorem~\ref{thm2}, that
  \[
  2^m\prod_{h=1}^mT_n(1+2\,\xi_{h,m}^2)
  \]
  is another way to express the numbers in parts (1)--(5). 
\end{remark}

\begin{proof}[Proof of Theorem~\ref{thm2}.]  The proof is similar to that of
  Theorem~\ref{thm1} after altering the definitions of the matrices $A_n$ and
  $B_n$ used there.  This time, for $n>1$, let $A'_n=(a'_{h,k})$ be the $n\times n$
  tridiagonal matrix with entries
\[
a'_{h,k} = 
\begin{cases}
  \hfill 4 & \quad \text{if $h=k$},\\
  \hfill-1 & \quad \text{if $|h-k|=1$ and $h\neq n$},\\
  -2& \quad \text{if $h=n$ and $k=n-1$},\\
  \hfill 0 & \quad \text{if $|h-k|\geq 2$}. 
\end{cases}
\]
In particular, $A'_1=[4]$.  Define the matrix $B'_n=(b'_{h,k})$ by
\[
b'_{h,k} = 
\begin{cases}
  \ 3 & \quad \text{if $h=k$},\\
  \ a'_{h,k} & \quad \text{otherwise}.
\end{cases}
\]
Thus, for instance,
\[
A'_3=
\left[\begin{array}{rrr}
4 & -1 & 0 \\
-1 & 4 & -1 \\
0 & -2 & 4
\end{array}\right],\qquad
B'_3=
\left[\begin{array}{rrr}
3 & -1 & 0 \\
-1 & 3 & -1 \\
0 & -2 & 3
\end{array}\right].
\]
If $n=1$, take $A'_1=[4]$ and $B'_1=[3]$.
\medskip

\noindent [{\bf(\ref{thm2-1}) $=$ (\ref{thm2-2})}]: Reasoning as in the proof of
Theorem~\ref{thm1}, equation~\eqref{eqn:deltaG} with $A'_n$ and~$B'_n$
substituted for $A_n$ and $B_n$ gives the symmetrized reduced Laplacian,
$\tD^G$, of $\sg_{2m\times(2n-1)}$.  Unless $n=1$, the matrix $\tD^G$ is {\em
not} the reduced Laplacian matrix of a sandpile graph since the sum of the
elements in its penultimate column is~$-1$ whereas the sum of the elements in
any column of the reduced Laplacian of a sandpile graph must be nonnegative.
However, in any case, the transpose $(\tD^G)^t$ is the reduced Laplacian of a
sandpile graph, which we call~$D'_{m\times n}$.  We embed it in the plane as a
grid as we did previously with $D_{m\times n}$ in the proof of
Theorem~\ref{thm1}, but this time with some edge-weights not equal to $1$.

Figure~\ref{fig:mobius 4x3} shows~$D'_{4\times3}$.  It is the same as
$D_{4\times3}$ as depicted in Figure~\ref{fig:4x3}, except that arrowed edges,
\raisebox{0.5ex}{\tikz\draw[<<->,>=angle 60] (0,0)--(0.8,0);}, have been
substituted for certain edges.  Each represents a pair of arrows---one from
right-to-left of weight~$2$ and one from left-to-right of weight~$1$---embedded
so that they coincide, as discussed in Section~\ref{section:Matchings and
trees}.  
\begin{figure}[ht] 
\begin{tikzpicture}[scale=1]

\foreach \i in {0,1,2} {
\draw (\i,0) -- (\i,3.6);
\draw (\i,3.6) .. controls +(90:20pt) and +(0:20pt) .. (-1.60,4.6);
}
\foreach \i in {0,1,2,3} {
\draw (-0.60,\i) -- (1,\i);
\draw[<<->,>=mytip] (1,\i) -- (2,\i);
\draw (-0.60,\i) .. controls +(180:20pt) and +(270:20pt) .. (-1.60,4.6);
}

\draw (-2,4.6) node(s){$s$};
\draw (1,-0.7) node(d){$D'_{4\times3}$};
\end{tikzpicture}
\caption{The symmetrized reduced Laplacian for $\sg_{8\times 5}$ is the reduced
Laplacian for $D'_{4\times 3}$.  Arrowed edges each represent a pair of directed
edges of weights~$1$ and $2$, respectively, as indicated by the number of arrow
heads. All other edges have weight $1$.}\label{fig:mobius 4x3}
\end{figure}
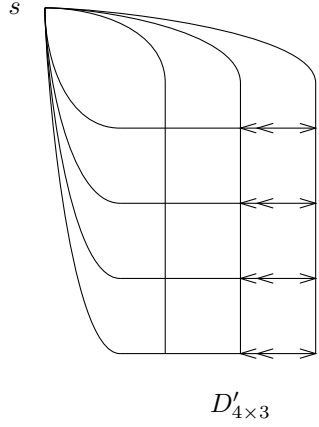

Reasoning as in the proof of Theorem~\ref{thm1}, we see that the number of
perfect matchings of $\mathcal{H}(D'_{m\times n})$ is equal to the number of
perfect matchings of $\tg_{2m\times(2n-1)}$, each counted according to weight.
This number is $\det(\tD^G)^t=\det\tD^G$, which is the number of symmetric
recurrents on $\sg_{2m\times(2n-1)}$ by Corollary~\ref{cor:nsr}.
\medskip

\noindent [{\bf(\ref{thm2-1}) $=$ (\ref{thm2-3})}]: Exactly the same argument as
given in the proof of Theorem~\ref{thm1} shows that
\[
\det\tD^G=\prod_{h=0}^{m-1}\chi_n(t_{h,m}),
\]
where $t_{h,m}$ is as before, but now $\chi_n(x)$ is the characteristic
polymonial of $A'_n$.  In light of Remark~\ref{remark:chebT}, it suffices to
show
\[
\chi_n(t_{h,m})=2\,T_n(1+2\,\xi_{m-h,m}^2)
\]
for each $h\in \{0,1,\cdots, m-1\}$ , which we now do as before, by showing both sides of the equation satisfy the same
recurrence.

Defining $\chi_0(x):=2$ and expanding the determinant defining $\chi_n(x)$ yields
\begin{align}
\chi_0(x)&=2\notag\\
\chi_1(x)&=4-x\label{thm2-chi}\\
\chi_j(x)&=(4-x)\chi_{j-1}(x)-\chi_{j-2}(x)\quad \text{for $j\geq2$}.\notag
\end{align}
On the other hand, definining $C_j(x):=2\,T_{j}(x)$, it follows from~\eqref{eqn:1st} that 
\begin{align}
C_0(x)&=2\notag\\
C_1(x)&=2x\label{thm2-C}\\
C_{j}(x)&=2x\,C_{j-1}(x)-C_{j-2}(x)\quad \text{for $j\geq2$}.\notag
\end{align}
The result now follows as before, using equation~\eqref{eqn:t-chi}.
\medskip

\noindent [{\bf(\ref{thm2-3}) $=$ (\ref{thm2-4})}]:  The numbers given in
parts~(\ref{thm2-3}) and~(\ref{thm2-4}) are equal by a straightforward
calculation, similar to that in the proof of the analogous result in
Theorem~\ref{thm1}, this time using~(\ref{T-factorization}).
\medskip

\noindent [{\bf(\ref{thm2-4}) $=$ (\ref{thm2-5})}]:  Formula~(2)
in~\cite{LW} gives the number of domino tilings of a $2m\times2n$ M\"obius
checkerboard.  That formula is identical to our double-product in
part~(\ref{thm2-4}) but with $\sin((4k-1)\pi/(4n))$ substituted for
$\zeta_{k,n}$.  Now,
\[
\sin\left( \frac{(4k-1)\pi}{4n}\right)=\cos\left(\frac{(4k-1-2n)\pi}{4n}\right).
\]
Defining $\theta(k)=(2k-1)\pi/(4n)$ and $\psi(k)=(4k-1-2n)\pi/(4n)$, it
therefore suffices to show that there is a permutation $\sigma$ of
$\{1,\dots,n\}$ such that $\theta(k)=\pm\psi(\sigma(k))$ for $k=1,\dots,n$
because, in that case, $\zeta_{k,n}=\cos(\theta(k))=\cos(\psi(\sigma(k)))$.
Such a permutation exists, for if $n=2t$, then 
\begin{align*}
\theta(2\ell-1)&=-\psi(t-\ell+1),\qquad1\leq \ell\leq t,\\
\theta(2\ell)&=\psi(\ell+t),\qquad1\leq \ell\leq t,
\end{align*}
and if $n=2t-1$, then
\begin{align*}
\theta(2\ell-1)&=\psi(t+\ell-1),\qquad1\leq \ell\leq t,\\
\theta(2\ell)&=-\psi(t-\ell),\qquad1\leq \ell\leq t-1.
\end{align*}

\end{proof}
\begin{remark}\label{remark:Lu-Wu} In the proof of Theorem~\ref{thm2}, we
  rewrote the double-product in part~(\ref{thm2-4}) as the Lu-Wu formula
  ((2)~in~\cite{LW}) for the number of domino tilings of the $2m\times2n$
  M\"obius checkerboard:
  \[
    \prod_{h=1}^{m}\prod_{k=1}^{n}\left(4\,\xi_{h,m}^2+4\,\mu_{k,n}^2\right),
  \]
  where
   $\mu_{k,n}:=\sin( (4k-1)\pi/(4n))$.  Thus, it is the work of Lu and Wu
  that allowed us to add part~(\ref{thm2-5}) to Theorem~\ref{thm2}.  This is in
  contrast to Theorem~\ref{thm1}, which gave an independent proof of the
  Kastelyn and Temperley-Fisher formula for the number of tilings of the
  ordinary $2m\times2n$ checkerboard.  
\end{remark}
\begin{example}
  The $36$ tilings of the ordinary $4\times4$ checkerboard are listed in
  Figure~\ref{fig:checker4x4}.  Considering these as tilings of the
  M\"obius-weighted $4\times 4$ checkerboard, the sum of the weights of the
  tilings is $71$, which is the number of tilings of the $4\times 4$ M\"obius
  checkerboard and the number of symmetric recurrents on $\sg_{4\times 3}$, in
  accordance with Theorem~\ref{thm2}.
\end{example}
\begin{example}\label{example:mobius 3x1}
  Figure~\ref{fig:m=3 n=1} shows the domino tilings of the M\"obius-weighted
  $5\times 2$ checkerboard. The total number of tilings, counted according to
  weight, is $41$, which is the number of domino tilings of a $6\times 2$
  M\"obius checkerboard, in agreement with case $m=3$ and
  $n=1$ of Theorem~\ref{thm2}.
\end{example}
\begin{figure}[ht]
\begin{tikzpicture}[scale=0.4]
\newcommand{\horizRect}[3]{
\draw[fill=gray!25,rounded corners=0.4mm] (#1+0.15,#2+0.15) rectangle (#1+1.85,#2+0.85);
\ifnum#3=1
  \draw[fill=black] (#1+1.0,#2+0.5) circle[radius=0.1];
  \else
    \ifnum#3=2
      \draw[fill=black] (#1+0.666,#2+0.5) circle[radius=0.1];
      \draw[fill=black] (#1+1.333,#2+0.5) circle[radius=0.1];
    \else
      \ifnum#3=3
	\draw[fill=black] (#1+0.5,#2+0.5) circle[radius=0.1];
	\draw[fill=black] (#1+1.0,#2+0.5) circle[radius=0.1];
	\draw[fill=black] (#1+1.5,#2+0.5) circle[radius=0.1];
      \fi  
    \fi
\fi  
}
\newcommand{\vertRect}[3]{
\draw[fill=gray!25,rounded corners=0.4mm] (#1+0.15,#2+0.15) rectangle (#1+0.85,#2+1.85);
\ifnum#3=1
  \draw[fill=black] (#1+0.5,#2+1.0) circle[radius=0.1];
  \else
    \ifnum#3=2
      \draw[fill=black] (#1+0.5,#2+0.666) circle[radius=0.1];
      \draw[fill=black] (#1+0.5,#2+1.333) circle[radius=0.1];
    \else
      \ifnum#3=3
	\draw[fill=black] (#1+0.5,#2+0.5) circle[radius=0.1];
	\draw[fill=black] (#1+0.5,#2+1.0) circle[radius=0.1];
	\draw[fill=black] (#1+0.5,#2+1.5) circle[radius=0.1];
      \fi  
    \fi
\fi  
}

\foreach \i in {0,1,2,3,4,5,6,7}{
  \foreach \j in {0} { 
  \draw (4*\i,7*\j) grid (4*\i+2,7*\j+5);
  }
}
\def\i{0}
\horizRect{\i}{4}{3};
\horizRect{\i}{3}{1};
\horizRect{\i}{2}{2};
\horizRect{\i}{1}{1};
\horizRect{\i}{0}{2};
\draw (\i+1,-1) node{$12$};

\def\i{4}
\horizRect{\i}{4}{3};
\horizRect{\i}{3}{1};
\horizRect{\i}{2}{2};
\vertRect{\i}{0}{1};
\vertRect{\i+1}{0}{1};
\draw (\i+1,-1) node{$6$};

\def\i{8}
\horizRect{\i}{4}{3};
\vertRect{\i}{2}{1};
\vertRect{\i+1}{2}{1};
\horizRect{\i}{1}{1};
\horizRect{\i}{0}{2};
\draw (\i+1,-1) node{$6$};

\def\i{12}
\horizRect{\i}{4}{3};
\vertRect{\i}{2}{1};
\vertRect{\i+1}{2}{1};
\vertRect{\i}{0}{1};
\vertRect{\i+1}{0}{1};
\draw (\i+1,-1) node{$3$};

\def\i{16}
\horizRect{\i}{4}{3};
\horizRect{\i}{3}{1};
\vertRect{\i}{1}{1};
\vertRect{\i+1}{1}{1};
\horizRect{\i}{0}{2};
\draw (\i+1,-1) node{$6$};

\def\i{20}
\vertRect{\i}{3}{1};
\vertRect{\i+1}{3}{1};
\horizRect{\i}{2}{2};
\horizRect{\i}{1}{1};
\horizRect{\i}{0}{2};
\draw (\i+1,-1) node{$4$};

\def\i{24}
\vertRect{\i}{3}{1};
\vertRect{\i+1}{3}{1};
\horizRect{\i}{2}{2};
\vertRect{\i}{0}{1};
\vertRect{\i+1}{0}{1};
\draw (\i+1,-1) node{$2$};

\def\i{28}
\vertRect{\i}{3}{1};
\vertRect{\i+1}{3}{1};
\vertRect{\i}{1}{1};
\vertRect{\i+1}{1}{1};
\horizRect{\i}{0}{2};
\draw (\i+1,-1) node{$2$};
\end{tikzpicture}
\caption{Domino tilings of the M\"obius-weighted $5\times2$ checkerboard.  The
number of dots on each domino indicates its weight.  The weight of each tiling
appears underneath.}\label{fig:m=3 n=1}
\end{figure}
\subsection{Symmetric recurrents on a $(2m-1)\times (2n-1)$ sandpile grid
graph.}\label{subsection:symmetric recurrents on oddxodd grid} The {\em
$2$-weighted $2m\times 2n$ grid graph}, $2$-$\Gamma_{2m\times2n}$ is the
ordinary $2m\times 2n$ grid graph but where each horizontal edge of the form
$\{(2m-2h,2n-1),(2m-2h,2n)\}$ for $0\leq h< m$ and each vertical edge of the
form $\{(2m-1,2n-2k),(2m,2n-2k)\}$ for $0\leq k< n$ is assigned the weight~$2$ (and all other
edges have weight~$1$).  See Figure~~\ref{fig:mobius-weighted} for an example.
The {\em $2$-weighted $2m\times 2n$ checkerboard} is the ordinary $2m\times 2n$
checkerboard but for which the weight of a domino tiling is taken to be the
weight of the corresponding perfect matching of $2$-$\Gamma_{2m\times 2n}$.
\begin{figure}[ht] 
\begin{tikzpicture}[scale=0.6]
\draw (0,0) grid(5,3);
\foreach \j in {0,2}{
  \draw[fill=white, draw=none] (4.5,\j) circle [radius=2.1mm];
  \draw (4.5,\j) node{$2$};
}
\foreach \i in {1,3,5}{
  \draw[fill=white, draw=none] (\i,0.5) circle [radius=2.8mm];
  \draw (\i,0.5) node{$2$};
}
\node at (2.5,-1){$2$-$\Gamma_{4\times 6}$}; 
\end{tikzpicture}
\caption{A $2$-weighted grid graph.}\label{fig:2-weighted}
\end{figure}
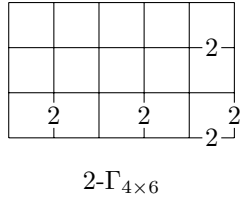

\begin{thm}\label{thm3} Let $T_j(x)$ denote the $j$-th Chebyshev polynomial of
  the first kind, and let
  \[
  \zeta_{h,d}:=\cos\left(\frac{(2h-1)\pi}{4d}\right)
  \]
  for all integers $h$ and $d\neq 0$.
  Then for all integers $m,n\geq 1$, the following are equal:
\begin{enumerate}
  \item\label{thm3-1} the number of symmetric recurrents on $\sg_{(2m-1)\times(2n-1)}$;
  \item\label{thm3-2} the number of domino tilings of the $2$-weighted
    checkerboard of size $2m\times 2n$;
  \item\label{thm3-3} \ 
    \[
    (-1)^{mn}\,2^m\prod_{h=1}^m T_{2n}(i\,\zeta_{h,m});
    \]
  \item\label{thm3-4} \
    \[
    \prod_{h=1}^{m}\prod_{k=1}^{n}\left(4\,\zeta_{h,m}^2+4\,\zeta_{k,n}^2\right).
    \]
\end{enumerate}
\begin{remark}\label{remark:chebT2}
  As in Remark~\ref{remark:chebT}, we use
  identity~(\ref{eqn:half-angle}), this time to get
  \[
  T_{2n}(i\,\zeta_{h,m})=(-1)^n\,T_n(1+2\,\zeta_{h,m}^2),
  \]
  allowing us to equate the formula in part~(\ref{thm3-3}) with
  \[
  2^m\prod_{h=1}^mT_n(1+2\,\zeta_{h,m}^2).
  \]
  (We do not know of an analogous expression for the formula in
  Theorem~\ref{thm1}~(\ref{thm1-3}) in terms of products of $n$-th Chebyshev polynomials.)
\end{remark}
\end{thm}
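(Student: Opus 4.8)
The plan is to follow the proof of Theorem~\ref{thm2} almost verbatim, replacing its matrices by ones adapted to the fact that an odd-by-odd grid has a central row ($i=m$) fixed pointwise by $\tau$ and a central column ($j=n$) fixed pointwise by $\sigma$. Taking $[m]\times[n]$ as orbit representatives ordered left-to-right then top-to-bottom, I would first compute the symmetrized reduced Laplacian $\tD^G$ from~\eqref{eqn:srl}. The key observation is that the two halves of the orbit of a vertex in column $n-1$ (resp.\ row $m-1$) are glued along column $n$ (resp.\ row $m$), so firing such an orbit drops \emph{two} grains on the neighboring representative in column $n$ (resp.\ row $m$). I expect the upshot to be that $\tD^G$ is exactly the block-tridiagonal matrix of Lemma~\ref{lemma:tridiagonal} with $A=B=A'_n$ and $C=2I_n$, where $A'_n$ is the matrix from the proof of Theorem~\ref{thm2} (with $A'_1=[4]$ as there). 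Two things distinguish this from Theorem~\ref{thm2}: every diagonal block, the last one included, is $A'_n$ rather than $B'_n=A'_n-I_n$ (because the central row is \emph{fixed}, not swapped with the row below, so no ``$-I_n$'' is subtracted off), and the one exceptional sub-diagonal block is $-2I_n$ instead of $-I_n$. Verifying this matrix shape carefully from the orbit data is, I expect, the main obstacle; as in the proofs of Theorems~\ref{thm1} and~\ref{thm2}, it would help to track a small worked example alongside the argument.

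\textbf{(\ref{thm3-1}) $=$ (\ref{thm3-2}).}  As in Theorem~\ref{thm2}, $\tD^G$ itself is not the reduced Laplacian of a sandpile graph (some columns have negative sum), but its transpose is. Reading off $(\tD^G)^t$, I would identify it as the reduced Laplacian of the weighted grid sandpile graph $D''_{m\times n}$: the grid $\Gamma_{m\times n}$ together with a sink $s'$, an edge of weight $2$ from $(1,1)$ to $s'$ (embedded as a digon of two weight-$1$ edges, as in Theorem~\ref{thm1}) and edges of weight $1$ from the other top- and left-boundary vertices to $s'$, and with each grid edge joining column $n$ to column $n-1$, or row $m$ to row $m-1$, replaced by a coinciding pair of oppositely oriented edges, the weight-$2$ one directed away from column $n$ (resp.\ row $m$) and the weight-$1$ one directed toward it. Overlaying $D''_{m\times n}$ with its dual and deleting $s'$ and the unbounded face vertex, the generalized Temperley bijection (Section~\ref{section:Matchings and trees}) matches weighted spanning trees of $D''_{m\times n}$ rooted at $s'$ with weighted perfect matchings of $\mathcal{H}(D''_{m\times n})$; reasoning as in Theorem~\ref{thm2}, the weighted matching count of $\mathcal{H}(D''_{m\times n})$ equals that of $2\text{-}\Gamma_{2m\times 2n}$ (see Figure~\ref{fig:2-weighted}), i.e., the weighted count of domino tilings of the $2$-weighted $2m\times 2n$ checkerboard. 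By the matrix-tree theorem and Corollary~\ref{cor:nsr}, all of these equal $\det(\tD^G)^t=\det\tD^G=|\sand(\sg_{(2m-1)\times(2n-1)})^G|$.

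\textbf{(\ref{thm3-1}) $=$ (\ref{thm3-3}).}  Feeding $A=B=A'_n$ and $C=2I_n$ into Lemma~\ref{lemma:tridiagonal} gives $\det\tD^G=(-1)^n\det T$ with $T=-A'_n\,U_{m-1}(\tfrac12 A'_n)+2\,U_{m-2}(\tfrac12 A'_n)$. Using the recurrence~\eqref{eqn:2nd} to write $A'_n\,U_{m-1}(\tfrac12 A'_n)=U_m(\tfrac12 A'_n)+U_{m-2}(\tfrac12 A'_n)$ and then the identity~\eqref{eqn:sum formula}, this collapses to $T=U_{m-2}(\tfrac12 A'_n)-U_m(\tfrac12 A'_n)=-2\,T_m(\tfrac12 A'_n)$, so $\det\tD^G=2^n\det T_m(\tfrac12 A'_n)$. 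The factorization~\eqref{T-factorization} of $T_m$ then turns this into $\prod_{k=1}^m\chi_n\bigl(2\cos\tfrac{(2k-1)\pi}{2m}\bigr)$, where $\chi_n$ is the characteristic polynomial of $A'_n$, satisfying the recurrence~\eqref{thm2-chi}. Now I would repeat, verbatim, the recurrence comparison from Theorem~\ref{thm2}, this time using $2\cos\tfrac{(2k-1)\pi}{2m}=2-4\sin^2\tfrac{(2k-1)\pi}{4m}$ in place of~\eqref{eqn:t-chi}, together with $\sin\tfrac{(2k-1)\pi}{4m}=\cos\tfrac{(2(m-k+1)-1)\pi}{4m}=\zeta_{m-k+1,m}$, to conclude $\chi_n\bigl(2\cos\tfrac{(2k-1)\pi}{2m}\bigr)=2\,T_n(1+2\,\zeta_{m-k+1,m}^2)$. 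Reindexing by $h=m-k+1$ and invoking Remark~\ref{remark:chebT2} (so that $T_n(1+2\,\zeta_{h,m}^2)=(-1)^n\,T_{2n}(i\,\zeta_{h,m})$) gives $\det\tD^G=(-1)^{mn}\,2^m\prod_{h=1}^m T_{2n}(i\,\zeta_{h,m})$, which is part~(\ref{thm3-3}).

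\textbf{(\ref{thm3-3}) $=$ (\ref{thm3-4}).}  This is the same computation as the analogous step of Theorem~\ref{thm1}: apply~\eqref{T-factorization} to each $T_{2n}(i\,\zeta_{h,m})$, pair the roots indexed by $k$ and $2n+1-k$ using $\cos\tfrac{(2(2n+1-k)-1)\pi}{4n}=-\zeta_{k,n}$ so that $(i\,\zeta_{h,m}-\zeta_{k,n})(i\,\zeta_{h,m}+\zeta_{k,n})=-(\zeta_{h,m}^2+\zeta_{k,n}^2)$, and collect the resulting signs and powers of $2$ to reach $\prod_{h=1}^m\prod_{k=1}^n(4\,\zeta_{h,m}^2+4\,\zeta_{k,n}^2)$. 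Beyond the identification of $\tD^G$, the only real bookkeeping hazard is keeping the two index substitutions, $h\leftrightarrow m-h+1$ and $k\leftrightarrow 2n+1-k$, and their accompanying signs consistent throughout.
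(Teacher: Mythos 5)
Your proposal is correct and follows essentially the same route as the paper's proof: the same identification of $\tD^G$ with the block matrix of Lemma~\ref{lemma:tridiagonal} for $A=B=A'_n$, $C=2I_n$, the same passage to the graph $D''_{m\times n}$ via the transpose and the generalized Temperley bijection, the same reduction $T=-2\,T_m(\tfrac12 A'_n)$ with the recurrence comparison giving $\chi_n(2\cos\tfrac{(2k-1)\pi}{2m})=2\,T_n(1+2\,\zeta_{m-k+1,m}^2)$, and the same root-pairing computation for (\ref{thm3-3}) $=$ (\ref{thm3-4}). The details you flag as needing verification (the shape of $\tD^G$, the $-2I_n$ block, the weight-$2$ edges from row $m$ and column $n$) all check out exactly as you predict.
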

\begin{proof}  The proof is similar to those for Theorem~\ref{thm1} and
  Theorem~\ref{thm2}.  Let $A_n'$ be the matrix defined at the beginning of the
  proof of Theorem~\ref{thm2}.  Then the symmetrized reduced Laplacian, $\tD^G$,
  for $2$-$\Gamma_{(2m-1)\times(2n-1)}$ is the matrix $D(m)$ displayed in the
  statement of Lemma~\ref{lemma:tridiagonal} after setting $A=B=A_n'$ and
  $C=2I_n$.  

\noindent [{\bf(\ref{thm2-1}) $=$ (\ref{thm2-2})}]:  The transpose $(\tD^G)^t$
is the reduced Laplacian of a sandpile graph, which we denote by $D''_{m\times
n}$ and embed in the plane as we did previously for $D_{m\times n}$ and
$D'_{m\times n}$ in Theorems~\ref{thm1} and~\ref{thm2}.  The embedding
of $D''_{m\times n}$ differs from that of $D'_{m\times n}$ only in
that each edge of the form $((m,i),(m-1,i))$ where $i\in [n]$ now
carries weight 2, again embedded as one edge coincident with the edge
$((m-1,i),(m,i))$ in the plane (Figure~\ref{fig:oddxodd 4x3} displays
$D''_{4\times 3}$).  
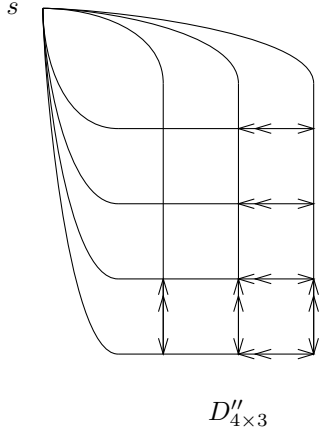
\begin{figure}[ht] 
\begin{tikzpicture}[scale=1]

\foreach \i in {0,1,2} {
\draw (\i,0) -- (\i,3.6);
\draw (\i,3.6) .. controls +(90:20pt) and +(0:20pt) .. (-1.60,4.6);
}
\foreach \j in {0,1,2,3} {
\draw (-0.60,\j) -- (1,\j);
\draw[<<->,>=mytip] (1,\j) -- (2,\j);
\draw (-0.60,\j) .. controls +(180:20pt) and +(270:20pt) .. (-1.60,4.6);
}
\foreach \i in {0,1,2}{
\draw[<->>,>=mytip] (\i,0) -- (\i,1);
}
\draw (-2,4.6) node(s){$s$};
\draw (1,-0.8) node(d){$D''_{4\times3}$};
\end{tikzpicture}
\caption{The symmetrized reduced Laplacian for $\sg_{7\times 5}$ is the reduced
Laplacian for $D''_{4\times 3}$. (The edge weights are encoded as in
Figure~\ref{fig:mobius 4x3}).}\label{fig:oddxodd 4x3}
\end{figure}
The result for this
section of the proof now follows just as it did in the proof of
Theorem~\ref{thm2}.

\noindent [{\bf(\ref{thm3-1}) $=$ (\ref{thm3-3})}]: By Corollary~\ref{cor:nsr}
and Lemma~\ref{lemma:tridiagonal}, the number of symmetric recurrents on
$2$-$\Gamma_{(2m-1)\times(2n-1)}$ is 
\[
\det\tD^G=(-1)^n\,\det(T),
\]
where
\[
T
=-A_n'\,U_{m-1}\left(\frac{A_n'}{2}\right)+2\,U_{m-2}\left(\frac{A_n'}{2}\right).
\]
Define
\[
s_{h,m}:=\cos\frac{(2h-1)\pi}{2m},\quad 1\leq h\leq m.
\]
Then, using identities from Section~\ref{subsection:tridiagonal},
\begin{align*}
  T&=-U_{m}\left(\frac{A_n'}{2}\right)+U_{m-2}\left(\frac{A_n'}{2}\right)\\[5pt]
  &=-2\,T_{m}\left(\frac{A_n'}{2}\right)\\
  &=-\prod_{h=1}^{m}(A_n'-2\,s_{h,m}I_n),
\end{align*}
Thus,
\[
\det\tD^G=\prod_{h=1}^{m}\chi_n(2\,s_{h,m}),
\]
where $\chi_n$ is the characteristic polynomial of $A_n'$.  Now consider the
recurrences~(\ref{thm2-chi}) and~(\ref{thm2-C}) in the proof of Theorem~\ref{thm2}.
Substituting $2s_{h,m}$ for $x$ in the former and $2-s_{h,m}$ for $x$ in the
latter, the two recurrences become the same.  It follows that 
$\chi_n(2\,s_{h,m})=2\,T_n(2-s_{h,m})$.  Then using a double-angle formula for cosine
and identity~(\ref{eqn:half-angle}),
\[
\chi_n(2\,s_{h,m})=2\,T_n(2-s_{h,m})=2\,T_n(1+2\,\zeta_{m-h+1,m}^2),
\]
and the result follows from Remark~\ref{remark:chebT2}.
\medskip

\noindent [{\bf(\ref{thm3-3}) $=$ (\ref{thm3-4})}]:  The numbers given in
parts~(\ref{thm2-3}) and~(\ref{thm2-4}) are equal by a straightforward
calculation, as in the proof of the analogous results in
Theorems~\ref{thm1} and \ref{thm2}.
\end{proof}
\begin{remark} Identities among trigonometric functions and among Chebyshev
  polynomials allow our formulae to be recast many ways.
  Remarks~\ref{remark:chebT},~\ref{remark:Lu-Wu}, and~\ref{remark:chebT2} have
  already provided some examples.  In addition, we note that in
  part~(\ref{thm2-4}) of Theorem~\ref{thm2} and in parts~(\ref{thm3-3})
  and~(\ref{thm3-4}) of Theorem~\ref{thm3}, one may replace each $\zeta_{h,n}$
  with $\sin((2h-1)\pi/(4n))$ or, as discussed at the end of the proof of
  Theorem~\ref{thm2}, with $\sin( (4h-1)\pi/(4n))$.
\end{remark}
\section{The order of the all-twos configuration}\label{section:order of all-2s}

Let $c$ be a configuration on a sandpile graph $\Gamma$, not necessarily an
element of $\sand(\Gamma)$, the sandpile group.  If $k$ is a nonnegative
integer, let $k\cdot c$ denote the vertex-wise addition of $c$ with itself $k$
times, without stabilizing.  The {\em order} of $c$, denoted $\mbox{order}(c)$,
is the smallest positive integer $k$ such that $k\cdot c$ is in the image of the
reduced Laplacian of $\Gamma$.  If $c$ is recurrent, then the order of $c$ is
the same as its order as an element of $\sand(\Gamma)$ according to the
isomorphism~(\ref{basic iso}).

Consider the sandpile grid graph, $\sg_{m\times n}$, with $m,n\geq 2$.  For each
nonnegative integer $k$, let $\vec{k}_{m\times n}=k\cdot\vec{1}_{m\times n}$ be
the {\em all-$k$s} configuration on $\sg_{m\times n}$ consisting of~$k$ grains
of sand on each vertex.  The motivating question for this section is: what is
the order of $\vec{1}_{m\times n}$?  Since $\vec{1}_{m\times n}$ has
up-down and left-right symmetry, its order must divide the order of the group of
symmetric recurrents on $\sg_{m\times n}$ calculated in
Theorems~\ref{thm1},~\ref{thm2}, and~\ref{thm3}.  The number of domino tilings
of a $2n\times 2n$ checkerboard can be written as $2^na_n^2$ where $a_n$ is an
odd integer (cf.~Proposition~\ref{prop:a_n}).  Our main result is
Theorem~\ref{thm4} which, through Corollary~\ref{cor:all-2s order}, says that the
order of $\vec{2}_{2n\times 2n}$ divides $a_n$.

\begin{prop}\label{prop:all-ones} Let $m,n\geq2$.
  \begin{enumerate}
  \item The configuration $\vec{1}_{m\times n}$ is not recurrent.
  \item The configuration $\vec{2}_{m\times n}$ is recurrent.
  \item\label{prop:all-ones3} The order of $\vec{1}_{m\times n}$ is either
    $\mathrm{order}(\vec{2}_{m\times n})$ or $2\,\mathrm{order}(\vec{2}_{m\times
    n})$.
  \item Let $\tD_{m\times n}$ be the reduced Laplacian of $\sg_{m\times n}$.
    The order of $\vec{1}_{m\times n}$ is the smallest integer $k$ such that
    $k\cdot\tD_{m\times n}^{-1}\vec{1}_{m\times n}$ is
    an integer vector.
  \end{enumerate}
\end{prop}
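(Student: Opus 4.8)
The four statements split cleanly: (1) and (2) are read off from Dhar's burning algorithm (Theorem~\ref{thm:Dhar}), while (3) and (4) are elementary consequences of the definition of order together with the finiteness of $\Z\tV/\mathrm{image}(\tD_{m\times n})\cong\sand(\sg_{m\times n})$. Throughout, write $\deg(v)$ for the degree of a non-sink vertex $v$ in the ordinary grid $\Gamma_{m\times n}$, so that $\outdeg(v)=4$ and the edge joining $v$ to the sink $s$ of $\sg_{m\times n}$ has weight $\wt(s,v)=4-\deg(v)$ (equal to $2$ at the corners, $1$ at the other boundary vertices, $0$ in the interior). The burning configuration is $b=\sum_v\wt(s,v)\,v$.

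\emph{Part (1).} Since $1<4$, the configuration $\vec{1}_{m\times n}$ is stable, and $b+\vec{1}_{m\times n}$ has at most $3$ grains on every vertex, so no non-sink vertex is unstable and hence none fires in the stabilization of $b+\vec{1}_{m\times n}$. As $m,n\ge2$ there is at least one non-sink vertex, so condition~(3) of Theorem~\ref{thm:Dhar} fails and $\vec{1}_{m\times n}$ is not recurrent.

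\emph{Part (2).} The configuration $\vec{2}_{m\times n}$ is stable. I will stabilize $b+\vec{2}_{m\times n}$ by firing the non-sink vertices $(i,j)$ in lexicographic (``reading'') order and check that each one is unstable when its turn comes. When we reach $(i,j)$, its grid-neighbors among $\{(i-1,j),(i,j-1)\}$ have already fired and its grid-neighbors among $\{(i+1,j),(i,j+1)\}$ have not, so the number of unfired grid-neighbors of $(i,j)$ is at most $2$ and the number of grains then sitting on $(i,j)$ is at least
\[
2+\wt(s,(i,j))+\bigl(\deg(i,j)-2\bigr)=2+\bigl(4-\deg(i,j)\bigr)+\deg(i,j)-2=4=\outdeg(i,j),
\]
so $(i,j)$ may be fired. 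After every non-sink vertex has fired exactly once, vertex $(i,j)$ holds $2+(4-\deg(i,j))+\deg(i,j)-4=2$ grains --- the same count for corner, edge, and interior vertices --- so the configuration obtained is $\vec{2}_{m\times n}$, which is stable. Thus the reading-order pass is a complete legal stabilization of $b+\vec{2}_{m\times n}$, giving $(b+\vec{2}_{m\times n})^{\circ}=\vec{2}_{m\times n}$; by Theorem~\ref{thm:Dhar}, $\vec{2}_{m\times n}$ is recurrent. (Equivalently, one checks $\tD_{m\times n}\vec{1}_{m\times n}=b$, so that firing every non-sink vertex once returns $b+\vec{2}_{m\times n}$ to $\vec{2}_{m\times n}$.)

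\emph{Parts (3) and (4).} Because $\Z\tV/\mathrm{image}(\tD_{m\times n})$ is finite, $r:=\mathrm{order}(\vec{1}_{m\times n})$ and $s:=\mathrm{order}(\vec{2}_{m\times n})$ are both finite, and $\{k\in\Z: k\cdot\vec{1}_{m\times n}\in\mathrm{image}(\tD_{m\times n})\}=r\Z$. From $r\cdot\vec{2}_{m\times n}=2(r\cdot\vec{1}_{m\times n})\in\mathrm{image}(\tD_{m\times n})$ we get $s\mid r$, and from $2s\cdot\vec{1}_{m\times n}=s\cdot\vec{2}_{m\times n}\in\mathrm{image}(\tD_{m\times n})$ we get $r\mid 2s$; writing $r=s\ell$ with $\ell\ge1$, divisibility of $s\ell$ into $2s$ forces $\ell\in\{1,2\}$, which is~(3). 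For~(4), $\det\tD_{m\times n}$ is the (positive) number of spanning trees of $\sg_{m\times n}$ rooted at $s$ by the matrix-tree theorem, so $\tD_{m\times n}$ is invertible over $\Q$; hence for $k\in\Z_{>0}$ one has $k\cdot\vec{1}_{m\times n}\in\mathrm{image}(\tD_{m\times n})$ if and only if $\tD_{m\times n}^{-1}(k\cdot\vec{1}_{m\times n})=k\cdot\tD_{m\times n}^{-1}\vec{1}_{m\times n}\in\Z\tV$, and the least such $k$ is at once $\mathrm{order}(\vec{1}_{m\times n})$ and the least $k$ making $k\cdot\tD_{m\times n}^{-1}\vec{1}_{m\times n}$ an integer vector. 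The only part needing real attention is~(2): one must choose a firing order that undoes the effect of firing the sink, and the reading order works precisely because it makes the per-vertex balance $2+(4-\deg)+\deg-4$ collapse to $2$ uniformly across corner, edge, and interior vertices.
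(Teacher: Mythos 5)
Your proposal is correct, and for parts (1), (2) and (4) it is essentially the paper's argument: part (1) is the same immediate application of Theorem~\ref{thm:Dhar}, part (4) is the same observation that $\tD_{m\times n}$ is invertible over $\Q$ so membership of $k\cdot\vec{1}_{m\times n}$ in the image is detected by integrality of $k\cdot\tD_{m\times n}^{-1}\vec{1}_{m\times n}$, and part (2) is the same burning-algorithm strategy, differing only in bookkeeping: the paper builds the partial orientation $\po$ (each non-sink vertex of indegree $2$) and fires along a linear extension of the induced poset, while you fire in reading order and verify instability by the uniform balance $2+(4-\deg)+( \deg-2)\ge 4$; these are two ways of exhibiting the same legal firing sequence, and your closing remark $\tD_{m\times n}\vec{1}_{m\times n}=b$ is the cleanest summary of why it returns to $\vec{2}_{m\times n}$. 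Part (3) is where you genuinely diverge: the paper argues with stabilizations and recurrence, first deriving the inequalities $2\beta\ge\alpha\ge\beta$ for $\alpha=\mathrm{order}(\vec{1}_{m\times n})$, $\beta=\mathrm{order}(\vec{2}_{m\times n})$ and then excluding intermediate values by a case analysis on $(2\beta-\alpha)\cdot\vec{1}_{m\times n}$ (it cannot be $1$, and otherwise it dominates $\vec{2}_{m\times n}$, forcing $\alpha=\beta$), whereas you observe that $\{k\in\Z: k\cdot\vec{1}_{m\times n}\in\mathrm{image}(\tD_{m\times n})\}$ is a subgroup of $\Z$, so it equals $\alpha\Z$, and the two containments $\beta\mid\alpha$ and $\alpha\mid 2\beta$ immediately force $\alpha\in\{\beta,2\beta\}$. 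Your lattice-theoretic argument is shorter, avoids any appeal to recurrence or to the abelian stabilization machinery, and generalizes verbatim to any configuration $c$ and its double $2c$ on any sandpile graph; the paper's argument stays entirely inside the sandpile-group picture (orders of recurrent elements and stabilizations), which is conceptually closer to the rest of Section~\ref{section:order of all-2s} but needs the extra case analysis. Both are complete proofs.
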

\begin{proof}
  Part~(1) follows immediately from the burning algorithm (Theorem~\ref{thm:Dhar}).
  For part~(2), we start by orienting some of the edges of $\sg_{m\times n}$ as shown in
  Figure~\ref{fig:orientation}.
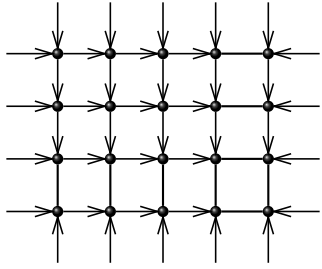
\begin{figure}[ht] 
\begin{tikzpicture}[scale=0.7]
\SetVertexMath
\GraphInit[vstyle=Art]
\SetVertexNoLabel
\tikzset{VertexStyle/.style = {%
shape = circle,
shading = ball,
ball color = black,
inner sep = 1.5pt,
}}
\SetUpEdge[color=black]
\foreach \i in {1,2,3,4,5}{
  \foreach \j in {1,2,3,4}{
  \Vertex[x=\i,y=\j]{v\i\j}
  }
}
\Edge[style={->,>=mytip, semithick}](v14)(v13)
\Edge[style={->,>=mytip, semithick}](v24)(v23)
\Edge[style={->,>=mytip, semithick}](v34)(v33)
\Edge[style={->,>=mytip, semithick}](v44)(v43)
\Edge[style={->,>=mytip, semithick}](v54)(v53)

\Edge[style={->,>=mytip, semithick}](v13)(v12)
\Edge[style={->,>=mytip, semithick}](v23)(v22)
\Edge[style={->,>=mytip, semithick}](v33)(v32)
\Edge[style={->,>=mytip, semithick}](v43)(v42)
\Edge[style={->,>=mytip, semithick}](v53)(v52)

\Edge[](v12)(v11)
\Edge[](v22)(v21)
\Edge[](v32)(v31)
\Edge[](v42)(v41)
\Edge[](v52)(v51)

\Edge[style={->,>=mytip, semithick}](v11)(v21)
\Edge[style={->,>=mytip, semithick}](v12)(v22)
\Edge[style={->,>=mytip, semithick}](v13)(v23)
\Edge[style={->,>=mytip, semithick}](v14)(v24)

\Edge[style={->,>=mytip, semithick}](v21)(v31)
\Edge[style={->,>=mytip, semithick}](v22)(v32)
\Edge[style={->,>=mytip, semithick}](v23)(v33)
\Edge[style={->,>=mytip, semithick}](v24)(v34)

\Edge[style={->,>=mytip, semithick}](v31)(v41)
\Edge[style={->,>=mytip, semithick}](v32)(v42)
\Edge[style={->,>=mytip, semithick}](v33)(v43)
\Edge[style={->,>=mytip, semithick}](v34)(v44)

\Edge[](v41)(v51)
\Edge[](v42)(v52)
\Edge[](v43)(v53)
\Edge[](v44)(v54)

\tikzset{VertexStyle/.style = {inner sep = 0.0pt,}}
\foreach \i in {1,2,3,4,5}{
  \Vertex[x=\i,y=0]{s\i1}
  \Edge[style={->,>=mytip, semithick}](s\i1)(v\i1)
  \Vertex[x=\i,y=5]{s\i4}
  \Edge[style={->,>=mytip, semithick}](s\i4)(v\i4)
}
\foreach \j in {1,2,3,4}{
  \Vertex[x=0,y=\j]{s1\j}
  \Edge[style={->,>=mytip, semithick}](s1\j)(v1\j)
  \Vertex[x=6,y=\j]{s5\j}
  \Edge[style={->,>=mytip, semithick}](s5\j)(v5\j)
}
\end{tikzpicture}
\caption{Partial orientation of $\sg_{4\times5}$. Arrows pointing into the grid
from the outside represent edges from the sink vertex.}\label{fig:orientation}
\end{figure}
First, orient all the edges containing the sink, $s$, so that they point away
from~$s$.  Next, orient all the horizontal edges to point to the right except
for the last column of horizontal arrows.  Finally, orient all the vertical
edges down except for the last row of vertical arrows.  More formally, define
the {\em partial orientation} of $\sg_{m\times n}$,
\begin{align*}
\po:=
&\{( s,(i,j) ): 1\leq i\leq m,j\in\{1,n\} \}\\
&\cup\{( s,(i,j) ): i\in\{1,m\}, 1\leq j\leq n\}\\
&\cup\{( (i,j),(i,j+1) ):1\leq j\leq n-2\}\\
&\cup\{( (i,j),(i+1,j) ): 1\leq i\leq m-2\}.
\end{align*}

Use $\po$ to define a poset $P$ on the vertices of $\sg_{m\times n}$ by first setting
$u<_P v$ if $(u,v)\in\po$, then taking the transitive closure.  Now list the
vertices of $\sg_{m\times n}$ in any order $v_1,v_2,\dots$ such that $v_i<_Pv_j$
implies $i<j$.  Thus, $v_1=s$ and $v_2,v_3,v_4,v_5$ are the four corners of the
grid, in some order.  Starting from $\vec{2}_{m\times n}$, fire $v_1$.
This has the effect of adding the burning configuration to $\vec{2}_{m\times
n}$.  Since the indegree of each non-sink vertex with respect to $\po$ is $2$,
after $v_1,\dots,v_{i-1}$ have fired,~$v_i$ is unstable.  Thus, after firing the
sink, every vertex will fire while stabilizing the resulting configuration.  So
$\vec{2}_{m\times n}$ is recurrent by the burning algorithm.
\medskip

\noindent[{\sc note:} One way to think about listing the vertices, as prescribed
above, is as follows.  Let $P_{-1}:=\{s\}$, and for $i\geq 0$, let $P_i$ be
those elements whose distance from some corner vertex is~$i$.  (By {\em
distance} from a corner vertex, we mean the length of a longest chain in $P$ or
the length of any path in $\po$ starting from a corner vertex.)  For instance, $P_0$
consists of the four corners.  After firing the vertices in $P_{-1},P_0,\dots,P_{i-1}$,
all of the vertices in $P_i$ are unstable and can be fired in any order.]
\medskip

For part~(3), let $\alpha=\mathrm{order}(\vec{1}_{m\times n})$ and
$\beta=\mathrm{order}(\vec{2}_{m\times n})$, and let $e$ be the identity of
$\sand(\sg_{m\times n})$.  Let $\tL$ denote the image of the reduced Laplacian,
$\tD$, of $\sg_{m\times n}$.  Since $e=(2\alpha\cdot\vec{1}_{m\times
n})^{\circ}=(\alpha\cdot\vec{2}_{m\times n})^{\circ}$ and
$e=(\beta\cdot\vec{2}_{m\times n})^{\circ}=(2\beta\cdot\vec{1}_{m\times
n})^{\circ}$, we have
\begin{equation}\label{eqn:alpha beta}
  2\beta\geq\alpha\geq\beta.
\end{equation}

We have $(2\beta-\alpha)\cdot\vec{1}_{m\times n}=0\bmod\tL$.  Suppose
$\alpha\neq2\beta$.  It cannot be that $2\beta-\alpha=1$.  Otherwise,
$\vec{1}_{m\times n}=0\bmod\tL$.  It would then follow that $\vec{2}_{m\times
n}$ and $\vec{3}_{m\times n}$ are recurrent elements equivalent to $0$ modulo
$\tL$, whence, $\vec{2}_{m\times n}=\vec{3}_{m\times n}=e$, a contradiction.
Thus, $(2\beta-\alpha)\cdot \vec{1}_{m\times n}\geq\vec{2}_{m\times n}$.  Since
$\vec{2}_{m\times n}$ is recurrent, $((2\beta-\alpha)\cdot \vec{1}_{m\times
n})^{\circ}$ is recurrent and equivalent to $0$ modulo $\tL$, and thus must be
the~$e$.  So $2\beta-\alpha\geq\alpha$, and the right side of~\eqref{eqn:alpha beta} implies
$\alpha=\beta$, as required.

Now consider part~(4).  The order of $\vec{1}_{m\times n}$ is the smallest
positive integer $k$ such that $k\cdot\vec{1}_{m\times n}=0\bmod\tL$, i.e., for
which there exists an integer vector~$v$ such that $k\cdot\vec{1}_{m\times
n}=\tD_{m\times n}\,v$.  The result follows.
\end{proof}

\begin{example} We have $\mathrm{order}(\vec{1}_{2\times
  2})=2\,\mathrm{order}(\vec{2}_{2\times 2})=2$, and
  $\mathrm{order}(\vec{1}_{2\times 3})=\mathrm{order}(\vec{2}_{2\times 3})=7$.
  In general, we do not know which case will hold in part~\ref{prop:all-ones3}
  of Proposition~\ref{prop:all-ones}.
\end{example}

Table~\ref{table:all-2s} records the order of $\vec{2}_{m\times n}$ for
$m,n\in\{2,3,\dots,10\}$.  
\begin{table}[ht]
\centering
\begin{tabular}{c|lllllllll}
$m\backslash n$&2&3&4&5&6&7&8&9&10\\\hline
2&1&7&5&9&13&47&17&123&89\\
3&$\cdot$&8&71&679&769&3713&8449&81767&93127\\
4&$\cdot$&$\cdot$&3&77&281&4271&2245&8569&18061\\
5&$\cdot$&$\cdot$&$\cdot$&52&17753&726433&33507&24852386&20721019\\
6&$\cdot$&$\cdot$&$\cdot$&$\cdot$&29&434657&167089&265721&4213133\\
7&$\cdot$&$\cdot$&$\cdot$&$\cdot$&$\cdot$&272&46069729&8118481057&4974089647\\
8&$\cdot$&$\cdot$&$\cdot$&$\cdot$&$\cdot$&$\cdot$&901&190818387&1031151241\\
9&$\cdot$&$\cdot$&$\cdot$&$\cdot$&$\cdot$&$\cdot$&$\cdot$&73124&1234496016491\\
10&$\cdot$&$\cdot$&$\cdot$&$\cdot$&$\cdot$&$\cdot$&$\cdot$&$\cdot$&89893
\end{tabular}
\bigskip

\caption{Order of the all-2s element on $\sg_{m\times n}$ (symmetric in $m$
and $n$).}
\label{table:all-2s}
\end{table}
Perhaps the most striking feature of Table~\ref{table:all-2s} is the relatively
small size of the numbers along the diagonal ($m=n$).  It seems natural to
group these according to parity.  The sequence $\{\vec{2}_{2n\times
2n}\}_{n\geq1}$ starts $1,3,29,901,89893,\dots$, which is the beginning of the
famous sequence, $(a_n)_{n\geq1}$, we now describe.  The following was
established independently by several people (cf.~\cite{JSZ}):
\begin{prop}\label{prop:a_n}
  The number of domino tilings of a $2n\times 2n$ checkerboard has the form
  \[
   2^na_n^2
  \]
  where $a_n$ is an odd integer.
\end{prop}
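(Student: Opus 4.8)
The plan is to deduce everything from the product formula for $N$, the number of domino tilings of the $2n\times 2n$ checkerboard. By Theorem~\ref{thm1} (equivalently, by the Kasteleyn--Temperley--Fisher formula, which it reproves),
\[
N=\prod_{h=1}^{n}\prod_{k=1}^{n}\bigl(c_h+c_k\bigr),\qquad c_h:=4\cos^2\!\frac{h\pi}{2n+1}.
\]
First I would split off the diagonal: the $n$ factors with $h=k$ contribute $\prod_{h=1}^n 2c_h=2^n\prod_h c_h$, while the off-diagonal factors occur in equal pairs $(h,k),(k,h)$, contributing $\bigl(\prod_{1\le h<k\le n}(c_h+c_k)\bigr)^2$. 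Writing $a_n:=\prod_{1\le h<k\le n}(c_h+c_k)>0$, this gives $N=2^n\bigl(\prod_{h=1}^n c_h\bigr)\,a_n^2$. Next I would evaluate $\prod_{h=1}^n c_h=1$ --- either from the classical telescoping identity $\prod_{h=1}^n 2\cos\frac{h\pi}{2n+1}=1$ (then square), or by observing that the $c_h$ are the roots of an explicit monic integer polynomial and reading off its constant term. This already yields $N=2^n a_n^2$, so the statement reduces to: $a_n$ is an odd integer.

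For integrality I would exploit that $a_n=\prod_{\{h,k\}}(c_h+c_k)$ is a \emph{symmetric} polynomial in $c_1,\dots,c_n$: any transposition of two variables merely permutes the factors. Hence $a_n$ is an integer-coefficient polynomial in the elementary symmetric functions of the $c_h$, and it suffices to know $\prod_{h=1}^n(x-c_h)\in\Z[x]$. This holds because $c_h-2=2\cos\frac{2h\pi}{2n+1}$ ($h=1,\dots,n$) are exactly the $n$ values of $\zeta+\zeta^{-1}$ as $\zeta$ runs over the non-trivial $(2n+1)$th roots of unity, so $\prod_h\bigl(x-(c_h-2)\bigr)$ is Galois-invariant with algebraic-integer coefficients, hence in $\Z[x]$; shifting by $2$ gives $\prod_h(x-c_h)\in\Z[x]$. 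Therefore $a_n\in\Z_{>0}$ and $N=2^n a_n^2$.

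The main obstacle is showing $a_n$ is \emph{odd}. For this I would reduce modulo a prime $\mathfrak P$ above $2$ in $\Z[\omega]$, where $\omega:=e^{2\pi i/(2n+1)}$; since $2n+1$ is odd, $2$ is unramified and the image $\bar\omega$ still has exact order $2n+1$ in the residue field. Since $c_h+c_k=4+\omega^h+\omega^{-h}+\omega^k+\omega^{-k}\equiv\omega^h+\omega^{-h}+\omega^k+\omega^{-k}\pmod 2$, it suffices to check $\bar\omega^h+\bar\omega^{-h}+\bar\omega^k+\bar\omega^{-k}\ne0$ for $1\le h<k\le n$. Multiplying by $\bar\omega^{h+k}$ and using characteristic $2$, the left side becomes $\bar\omega^{2h+k}+\bar\omega^h+\bar\omega^{h+2k}+\bar\omega^k=(\bar\omega^h+\bar\omega^k)(\bar\omega^{h+k}+1)$; the first factor vanishes only when $(2n+1)\mid(k-h)$ and the second only when $(2n+1)\mid(h+k)$, both impossible since $0<k-h<n$ and $0<h+k\le 2n-1$. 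Hence no factor of $a_n$ lies in $\mathfrak P$, so $a_n\notin\mathfrak P$; as $a_n$ is a rational integer and $\mathfrak P\cap\Z=2\Z$, $a_n$ is odd. (The case $n=1$ is the empty product $a_1=1$ with $N=2$, consistent.) The only places needing care are the evaluation $\prod_h c_h=1$ and the standard facts that $\Z[\omega]$ is the ring of integers of $\mathbb{Q}(\zeta_{2n+1})$ and that $\bar\omega$ retains full order modulo $\mathfrak P$; the rest is formal.
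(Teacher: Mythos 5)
Your argument is correct, but it is a genuinely different route from the paper's: the paper does not prove Proposition~\ref{prop:a_n} at all, citing instead the literature (\cite{JSZ}) and, for the combinatorial content it actually uses later, Pachter's proof \cite{Pachter} (building on Ciucu \cite{Ciucu}), which identifies $a_n$ as the number of domino tilings of the triangular region $H_n$; that identification is what feeds into Proposition~\ref{prop:an} and Theorem~\ref{thm4}. You instead extract everything from the Kasteleyn--Temperley--Fisher product (Theorem~\ref{thm1}(\ref{thm1-4}) with $m=n$): the diagonal/off-diagonal split gives $N=2^n\bigl(\prod_h c_h\bigr)a_n^2$ with $a_n=\prod_{h<k}(c_h+c_k)$, the classical identity $\prod_{h=1}^n 2\cos\frac{h\pi}{2n+1}=1$ kills the middle factor, symmetry of $a_n$ in the $c_h$ together with $\prod_h(x-c_h)\in\Z[x]$ (Galois invariance of $\{\omega^h+\omega^{-h}\}$) gives integrality, and the reduction modulo a prime above $2$ in $\Z[\omega]$, with the clean factorization $\omega^{h+k}(c_h+c_k-4)=(\omega^h+\omega^k)(\omega^{h+k}+1)$ and the order of $\bar\omega$ equal to $2n+1$, gives oddness; I checked each of these steps, including the index constraints $0<k-h<n$ and $0<h+k\le 2n-1$, and they hold (the base case $n=1$, $N=2$, $a_1=1$ is also consistent, and since any representation $N=2^n a^2$ with $a>0$ is unique, your $a_n$ agrees with the paper's). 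What each approach buys: yours is self-contained within the paper's framework and purely algebraic/number-theoretic, giving a new proof of a cited result; the paper's route via Ciucu--Pachter is bijective and, crucially for Section~\ref{section:order of all-2s}, produces the combinatorial meaning of $a_n$ (tilings of $H_n$, hence $\#\,\sand(P_n)$), which your product formula for $a_n$ does not by itself provide.
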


For each positive integer $n$, let $P_n$ be the sandpile graph with vertices
\[
V(P_n)=\{v_{i,j}:1\leq i\leq n\mbox{ and }1\leq j \leq i\}\cup\{s\}.
\]
Each $v_{i,j}$ is connected to those vertices $v_{i',j'}$ such that
$|i-i'|+|j-j'|=1$.  In addition, every vertex of the form $v_{i,n}$ is connected
to the sink vertex, $s$.  The first few cases are illustrated in
Figure~\ref{fig:Pn}. 
\begin{figure}[ht] 
\begin{tikzpicture}[scale=0.8]
\SetVertexMath
\GraphInit[vstyle=Art]
\SetUpVertex[MinSize=3pt]
\SetVertexLabel
\tikzset{VertexStyle/.style = {%
shape = circle,
shading = ball,
ball color = black,
inner sep = 1.5pt
}}
\SetUpEdge[color=black]
\Vertex[NoLabel,x=0,y=0]{a11}
\Vertex[LabelOut=true,Lpos=0,L={s},x=1,y=0]{as}
\Edge[](a11)(as)
\Vertex[NoLabel,x=3,y=0]{b11}
\Vertex[NoLabel,x=4,y=0]{b12}
\Vertex[NoLabel,x=4,y=1]{b21}
\Vertex[LabelOut=true,Lpos=0,L={s},x=5,y=0.5]{bs}
\Edges(b11,b12,b21)
\Edge[](b12)(bs)
\Edge[](b21)(bs)
\Vertex[NoLabel,x=7,y=0]{c11}
\Vertex[NoLabel,x=8,y=0]{c12}
\Vertex[NoLabel,x=9,y=0]{c13}
\Vertex[NoLabel,x=8,y=1]{c22}
\Vertex[NoLabel,x=9,y=1]{c23}
\Vertex[NoLabel,x=9,y=2]{c33}
\Vertex[LabelOut=true,Lpos=0,L={s},x=10,y=1]{cs}
\Edges(c11,c12,c13,c23,c33)
\Edges(c12,c22,c23)
\Edge[](c13)(cs)
\Edge[](c23)(cs)
\Edge[](c33)(cs)
\draw (0.5,-0.8) node(G){$P_1$};
\draw (4,-0.8) node(G){$P_2$};
\draw (8.5,-0.8) node(G){$P_3$};
\end{tikzpicture}
\caption{}\label{fig:Pn}
\end{figure}
Next define a family of triangular checkerboards, $H_n$, as in
Figure~\ref{fig:Hn}.  The checkerboard $H_n$ for $n\geq 2$ is formed by adding
a $2\times (2n-1)$ array (width-by-height) of squares to the right of
$H_{n-1}$.
\begin{figure}[ht] 
\begin{tikzpicture}[scale=0.5]
\draw (0,0) -- (2,0) -- (2,1) -- (0,1) -- (0,0);
\draw (1,0) -- (1,1);
\draw (4,0) -- (8,0);
\draw (4,1) -- (8,1);
\draw (6,2) -- (8 ,2);
\draw (6,3) -- (8 ,3);
\draw (4,0) -- (4,1);
\draw (5,0) -- (5,1);
\draw (6,0) -- (6,3);
\draw (7,0) -- (7,3);
\draw (8,0) -- (8,3);
\draw (10,0) -- (16,0);
\draw (10,1) -- (16,1);
\draw (12,2) -- (16,2);
\draw (12,3) -- (16,3);
\draw (14,4) -- (16,4);
\draw (14,5) -- (16,5);
\draw (10,0) -- (10,1);
\draw (11,0) -- (11,1);
\draw (12,0) -- (12,3);
\draw (13,0) -- (13,3);
\draw (14,0) -- (14,5);
\draw (15,0) -- (15,5);
\draw (16,0) -- (16,5);
\draw (1,-0.8) node(G){$H_1$};
\draw (6,-0.8) node(G){$H_2$};
\draw (13,-0.8) node(G){$H_3$};
\end{tikzpicture}
\caption{}\label{fig:Hn}
\end{figure}
These graphs were introduced by M.~Ciucu~\cite{Ciucu} and later used by
L.~Pachter~\cite{Pachter} to give the first combinatorial proof of
Proposition~\ref{prop:a_n}.  As part of his proof, Pachter shows that~$a_n$
is the number of domino tilings of $H_n$.

As noted in~\cite{KPW}, considering $H_n$ as a planar graph and taking its dual
(forgetting about the unbounded face of $H_n$) gives the graph
$\mathcal{H}(P_n)$ corresponding to $P_n$ under the generalized Temperley
bijection of Section~\ref{section:Matchings and trees}.  See Figure~\ref{fig:Hn and H(Pn)}.
\begin{figure}[ht] 
\begin{tikzpicture}[scale=0.4]
\SetVertexMath
\GraphInit[vstyle=Art]
\SetUpVertex[MinSize=3pt]
\SetVertexLabel
\tikzset{VertexStyle/.style = {%
shape = circle,
shading = ball,
ball color = black,
inner sep = 1.1pt
}}
\SetUpEdge[color=black]
\Vertex[NoLabel,x=0.5,y=0.5]{a1}
\Vertex[NoLabel,x=1.5,y=0.5]{a2}
\Vertex[NoLabel,x=2.5,y=0.5]{a3}
\Vertex[NoLabel,x=3.5,y=0.5]{a4}
\Vertex[NoLabel,x=4.5,y=0.5]{a5}
\Vertex[NoLabel,x=5.5,y=0.5]{a6}
\Edges(a1,a2,a3,a4,a5,a6)
\Vertex[NoLabel,x=2.5,y=1.5]{b3}
\Vertex[NoLabel,x=3.5,y=1.5]{b4}
\Vertex[NoLabel,x=4.5,y=1.5]{b5}
\Vertex[NoLabel,x=5.5,y=1.5]{b6}
\Edges(b3,b4,b5,b6)
\Vertex[NoLabel,x=2.5,y=2.5]{c3}
\Vertex[NoLabel,x=3.5,y=2.5]{c4}
\Vertex[NoLabel,x=4.5,y=2.5]{c5}
\Vertex[NoLabel,x=5.5,y=2.5]{c6}
\Edges(c3,c4,c5,c6)
\Vertex[NoLabel,x=4.5,y=3.5]{d5}
\Vertex[NoLabel,x=5.5,y=3.5]{d6}
\Edges(d5,d6)
\Vertex[NoLabel,x=4.5,y=4.5]{e5}
\Vertex[NoLabel,x=5.5,y=4.5]{e6}
\Edges(e5,e6)
\Edges(a3,b3,c3)
\Edges(a4,b4,c4)
\Edges(a5,b5,c5,d5,e5)
\Edges(a6,b6,c6,d6,e6)
\draw (0,0) -- (6,0);
\draw (0,1) -- (6,1);
\draw (2,2) -- (6,2);
\draw (2,3) -- (6,3);
\draw (4,4) -- (6,4);
\draw (4,5) -- (6,5);
\draw (0,0) -- (0,1);
\draw (1,0) -- (1,1);
\draw (2,0) -- (2,3);
\draw (3,0) -- (3,3);
\draw (4,0) -- (4,5);
\draw (5,0) -- (5,5);
\draw (6,0) -- (6,5);
\end{tikzpicture}
\caption{$H_3$ and $\mathcal{H}(P_3)$.}\label{fig:Hn and H(Pn)}
\end{figure}
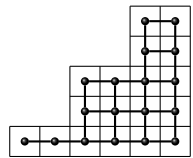
\begin{prop}\label{prop:an} The number of elements in the sandpile group for $P_n$ is
  \[
  \#\,\sand(P_n) =a_n,
  \]
  where $a_n$ is as in Proposition~\ref{prop:a_n}.
\end{prop}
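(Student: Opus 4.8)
The plan is to compute $\#\,\sand(P_n)$ as a weighted count of spanning trees, carry that count through the generalized Temperley bijection of Section~\ref{section:Matchings and trees}, and then recognize the resulting objects as domino tilings of $H_n$, whose number is $a_n$ by Pachter's theorem. Since every edge of $P_n$ has weight~$1$, the matrix-tree theorem together with the isomorphism~(\ref{basic iso}) gives
\[
\#\,\sand(P_n)=\det\tD_{P_n}=\#\{\text{spanning trees of }P_n\text{ rooted at }s\},
\]
each tree counted with weight~$1$.

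Next I would embed $P_n$ in the plane as in Figure~\ref{fig:Pn}, with the sink $s$ on the unbounded face, and take $f_s$ to be that face. The generalized Temperley bijection then supplies a weight-preserving bijection between spanning trees of $P_n$ rooted at $s$ and perfect matchings of $\mathcal{H}(P_n)$, so (all weights being~$1$) these two finite sets have equal cardinality. Now I would invoke the observation of~\cite{KPW}, illustrated in Figure~\ref{fig:Hn and H(Pn)}, that $\mathcal{H}(P_n)$ is exactly the planar dual $H_n^{\perp}$ of the triangular checkerboard $H_n$ with the vertex of the unbounded face deleted: its vertices are the unit squares of $H_n$, and its edges record adjacency of squares, so a perfect matching of $\mathcal{H}(P_n)$ is the same thing as a domino tiling of $H_n$. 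Finally, as Pachter shows in the course of his combinatorial proof of Proposition~\ref{prop:a_n}, the number of domino tilings of $H_n$ is $a_n$. Chaining the four equalities yields $\#\,\sand(P_n)=a_n$.

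The only step that is more than a direct appeal to an already-recalled result is the identification $\mathcal{H}(P_n)\cong H_n^{\perp}$, and this is the part I would write out with care. It amounts to checking that, for the chosen planar embedding of $P_n$ (sink on the outer face), the recipe of Section~\ref{section:Matchings and trees}---one vertex for each non-sink vertex, each edge, and each bounded face of $P_n$, together with the incidences vertex--edge and edge--face---reassembles the square-adjacency graph of $H_n$. One can verify this by comparing the two embeddings directly, or by induction on $n$: the inductive step matches the $2\times(2n-1)$ strip of squares adjoined to $H_{n-1}$ in forming $H_n$ with the new vertices, edges, and bounded faces created when the last staircase row $v_{n,1},\dots,v_{n,n}$ is attached to $P_{n-1}$, and the case $n=1$ is immediate. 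No estimates or further computations are needed.
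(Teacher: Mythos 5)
Your proposal is correct and follows essentially the same route as the paper: matrix-tree theorem for $\det\tD_{P_n}$, the generalized Temperley bijection between spanning trees of $P_n$ and perfect matchings of $\mathcal{H}(P_n)$, the identification of $\mathcal{H}(P_n)$ with the square-adjacency (dual) graph of $H_n$ as noted in~\cite{KPW}, and Pachter's count of the tilings of $H_n$. The only difference is that you propose to verify $\mathcal{H}(P_n)\cong H_n^{\perp}$ in detail, whereas the paper simply cites~\cite{KPW} for this identification.
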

\begin{proof}
  The number of domino tilings of $H_n$ equals the number of perfect matchings
  of $\mathcal{H}(P_n)$. By the generalized Temperley bijection, the latter
  is the number of spanning trees of $P_n$, and hence, the order of the sandpile group of
  $P_n$.  As mentioned above, Pachter shows in~\cite{Pachter} that~$a_n$ is the
  number of domino tilings of $H_n$.
\end{proof}

The main result of this section is the following:
\begin{thm}\label{thm4}
  Let $\langle\vec{2}_{2n\times 2n}\rangle$ be the cyclic subgroup of
  $\sand(\sg_{2n\times 2n})$ generated by the all-$2$s element of
  $\Gamma_{2n\times 2n}$, and let
  $\vec{2}_{n}$ denote the all-$2$s element on $P_n$.  Then the mapping
  \[
  \psi\colon\langle\vec{2}_{2n\times 2n}\rangle\to\sand(P_n),
  \]
  determined by $\psi(\vec{2}_{2n\times 2n})=\vec{2}_n$, is a well-defined
  injection of groups.
\end{thm}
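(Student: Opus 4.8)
The plan is to realize $\psi$ as the restriction of a genuine homomorphism of finite abelian groups, so that well-definedness is automatic, and then to prove injectivity by a parity argument. First I would locate the element $\vec2_{2n\times2n}$. It is recurrent by Proposition~\ref{prop:all-ones}(2) and it is plainly fixed by the Klein $4$-group $G$, so it lies in $\sand(\sg_{2n\times2n})^G$, which the proof of Theorem~\ref{thm1} identifies with $\sand(D_{n\times n})$ via $\tD^G=\tD_{D_{n\times n}}$; since restricting a symmetric configuration to orbit representatives preserves its values, $\vec2_{2n\times2n}$ corresponds to the recurrent element of $\sand(D_{n\times n})$ in the class of the all-twos configuration, and group isomorphisms preserve orders, so $\mathrm{order}(\vec2_{2n\times2n})$ is the order of that element. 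Now $D_{n\times n}$ admits the diagonal reflection $\tau\colon(i,j)\mapsto(j,i)$ as an automorphism; the all-twos class is $\tau$-fixed, and the recurrent-representative map is $\tau$-equivariant (Corollary~\ref{cor:preserved}), so its recurrent representative is $\tau$-symmetric and lies in $\sand(D_{n\times n})^{\langle\tau\rangle}$. By Proposition~\ref{prop:symmetric subgroup iso} this group is $\Z\po/\mathrm{image}(\tD^{\langle\tau\rangle})$ with $\po=\po(D_{n\times n},\langle\tau\rangle)$, and $\mathrm{order}(\vec2_{2n\times2n})$ is the order of the all-twos vector there.

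The geometric heart of the argument is that the obvious bijection between $\po$ (the $\tau$-orbits of nonsink vertices of $D_{n\times n}$, indexed by the triangle $\{(i,j):i\le j\}$) and the nonsink vertices of $P_n$ carries the sublattice $\mathrm{image}(\tD^{\langle\tau\rangle})$ into $\mathrm{image}(\tD_{P_n})$, with $\Z$-index $\det\tD^{\langle\tau\rangle}/\det\tD_{P_n}=2^{n}$. (One way to see the index is a power of $2$, in fact $2^n$: $\det\tD^{\langle\tau\rangle}$ counts the $\tau$-symmetric spanning trees of $D_{n\times n}$, which via the generalized Temperley bijection and Pachter's analysis of the triangular checkerboard $H_n$ comes out to $2^n a_n$, whereas $\det\tD_{P_n}=\#\sand(P_n)=a_n$ by Proposition~\ref{prop:an}.) This inclusion induces a surjection
\[
\pi\colon\ \sand(D_{n\times n})^{\langle\tau\rangle}\cong\Z\po/\mathrm{image}(\tD^{\langle\tau\rangle})\ \twoheadrightarrow\ \Z\po/\mathrm{image}(\tD_{P_n})=\sand(P_n)
\]
carrying the all-twos vector to $\vec2_n$, with kernel $K$ a $2$-group of order $2^n$. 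Tracing the identifications of the previous paragraph back to $\sand(\sg_{2n\times2n})$, the map $\psi$ is exactly the restriction of $\pi$ to $\langle\vec2_{2n\times2n}\rangle$; being a restriction of a homomorphism it is well defined, and by construction $\psi(\vec2_{2n\times2n})=\vec2_n$.

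It remains to prove $\psi$ injective, i.e.\ $\langle\vec2_{2n\times2n}\rangle\cap K=0$. Because $K$ is a $2$-group and $\#\sand(P_n)=a_n$ is odd (Propositions~\ref{prop:a_n} and~\ref{prop:an}), it suffices to show $\mathrm{order}(\vec2_{2n\times2n})$ is odd; then $\pi$ restricts to an isomorphism of $\langle\vec2_{2n\times2n}\rangle$ onto $\langle\vec2_n\rangle$ and the theorem follows (indeed $\mathrm{order}(\vec2_{2n\times2n})=\mathrm{order}(\vec2_n)$). This is the main obstacle. Since $\vec2_{2n\times2n}=2\cdot\vec1_{2n\times2n}$, $\mathrm{order}(\vec2_{2n\times2n})$ is odd if and only if the $2$-adic valuation of $\mathrm{order}(\vec1_{2n\times2n})$ is at most $1$ (compare Proposition~\ref{prop:all-ones}(3)), and by Proposition~\ref{prop:all-ones}(4) $\mathrm{order}(\vec1_{2n\times2n})$ equals the denominator of $\tD_{2n\times2n}^{-1}\vec1_{2n\times2n}$. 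I would establish the bound $v_2(\mathrm{order}(\vec1_{2n\times2n}))\le1$ by pushing this vector through the symmetry: by symmetry its denominator equals that of $(\tD^{\langle\tau\rangle})^{-1}\vec1$, and writing $\tD^{\langle\tau\rangle}=\tD_{P_n}U$ with $U$ an integer matrix of determinant $\pm2^n$ (from the lattice inclusion above) gives $(\tD^{\langle\tau\rangle})^{-1}\vec1=U^{-1}(\tD_{P_n}^{-1}\vec1)$, where $\tD_{P_n}^{-1}\vec1$ has odd denominator (dividing $a_n$); one then checks that $\mathrm{adj}(U)$ applied to the cleared numerator vector of $\tD_{P_n}^{-1}\vec1$ is divisible by $2^{\,n-1}$, so at most one factor of $2$ survives. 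Proving this last divisibility---for which I would exploit the block-tridiagonal/Chebyshev structure of these Laplacians from \S\ref{subsection:tridiagonal}, or set up a recursion on $n$---is the crux of the whole argument.
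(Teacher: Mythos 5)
Your reduction to showing that $\mathrm{order}(\vec{2}_{2n\times 2n})$ is odd is logically sound as a strategy (an odd-order cyclic group meets a $2$-group trivially), but the proposal does not actually prove it: the final divisibility claim --- that $\mathrm{adj}(U)$ applied to the cleared numerator of $\tD_{P_n}^{-1}\vec{1}$ is divisible by $2^{\,n-1}$ --- is exactly the content of the theorem in disguise, and you leave it unproven, explicitly calling it ``the crux.'' The supporting structure is also only asserted, not established: (i) the lattice inclusion $\mathrm{image}(\tD^{\langle\tau\rangle})\subseteq\mathrm{image}(\tD_{P_n})$, i.e.\ a factorization $\tD^{\langle\tau\rangle}=\tD_{P_n}U$ with $U$ integral, requires a computation (it is precisely a ``factor of $2$ along the diagonal'' phenomenon that has to be exhibited); (ii) the count $\det\tD^{\langle\tau\rangle}=2^n a_n$ is waved at via Temperley and Pachter, and moreover $\det\tD^{\langle\tau\rangle}$ counts $\tau$-symmetric \emph{recurrents} (Corollary~\ref{cor:nsr}), not ``$\tau$-symmetric spanning trees,'' so the asserted identification needs a separate argument. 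As written, the proof therefore has a genuine gap at its central step, with the preliminary steps also incomplete.

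For comparison, the paper's proof sidesteps any parity or index analysis. It works in one step between $P_n$ and $\sg_{2n\times 2n}$ via the explicit wedge map $\phi\colon\Z\tV_n\to\Z\tV_{2n\times 2n}$ producing dihedrally symmetric configurations, and verifies the identity that $\tD_{2n\times 2n}(\phi(c))$ equals $\phi(\tD_n(c))$ except for a factor of $2$ on the diagonal and antidiagonal. The doubling is then absorbed by the special configuration $s_n$ (sand equal to distance from the sink), which satisfies $\tD_n s_n=t_n$: from $\tD_n x=\vec{2}_n$ one gets $\tD_n(x-s_n)=\vec{2}_n-t_n$ and hence $\tD_{2n\times 2n}\phi(x-s_n)=\vec{2}_{2n\times 2n}$, i.e.\ $y=\phi(x-s_n)$ for the rational solutions $x,y$. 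Since $\phi$ and $s_n$ are integral and $\phi$ is injective, $kx$ is integral if and only if $ky$ is, so $k\,\vec{2}_n\in\tL_n$ if and only if $k\,\vec{2}_{2n\times 2n}\in\tL_{2n\times 2n}$; the two elements have equal order and $\psi$ is a well-defined injection, with no need to know the $2$-part of anything. If you want to salvage your route, the missing divisibility would most likely have to be extracted from an identity of exactly this $y=\phi(x-s_n)$ type, at which point the detour through $D_{n\times n}$, $U$, and $\mathrm{adj}(U)$ becomes unnecessary.
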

\begin{proof}
Let $\tV_n$ and $\tV_{2n\times 2n}$ denote the non-sink vertices of $P_n$ and
$\sg_{2n\times 2n}$, respectively. We view configurations on $P_n$ as triangular
arrays of natural numbers and configurations on $\sg_{2n\times 2n}$ as
$2n\times 2n$ square arrays of natural numbers.  Divide the $2n\times 2n$ grid
by drawing bisecting horizontal, vertical, and diagonal lines, creating eight
wedges.  Define $\phi\colon\Z \tV_n\to \Z\tV_{2n\times 2n}$, by placing a
triangular array in the position of one of these wedges, then flipping about
lines, creating a configuration on $\sg_{2n\times 2n}$ with dihedral symmetry.
Figure~\ref{fig:phi} illustrates the case $n=4$.

\begin{figure}[ht] 
\begin{tikzpicture}[scale=0.4]
  \draw (-1,0) node(a){
  $
  \begin{array}{cccc}
    &&&j\\
    &&h&i\\
    &e&f&g\\
    a&b&c&d
  \end{array}
  $
  };

  \draw (14,0) node(b){
  $
  \begin{array}{cccccccc}
    j&i&g&d&d&g&i&j\\
    i&h&f&c&c&f&h&i\\
    g&f&e&b&b&e&f&g\\
    d&c&b&a&a&b&c&d\\
    d&c&b&a&a&b&c&d\\
    g&f&e&b&b&e&f&g\\
    i&h&f&c&c&f&h&i\\
    j&i&g&d&d&g&i&j
    \end{array}
  $
  };
  \draw [dashed] (14,-4.3) -- (14,4.3);
  \draw [dashed] (8.5,0) -- (19.5,0);
  \draw [dashed] (8.5,-4.3) -- (19.5,4.3);
  \draw [dashed] (8.5,4.3) -- (19.5,-4.3);
  \draw [fill=blue!20, opacity=0.2] (14,0) -- (19,4) -- (19,0) -- (14,0);
  \path[|->] (3,0) edge node[above]{$\phi$} (7,0);
\end{tikzpicture}
\caption{$\phi\colon\Z P_4\to\Z \sg_{8\times 8}$.}\label{fig:phi}
\end{figure}

We define special types of configurations on $P_n$.  First, let $s_n$ be
the configuration in which the number of grains of sand on each vertex records that
vertex's distance to the sink; then let $t_n$ denote the sandpile with no sand
except for one grain on each vertex along the boundary diagonal, i.e., those
vertices with degree less than~$3$.  Figure~\ref{fig:special} illustrates the
case $n=4$.

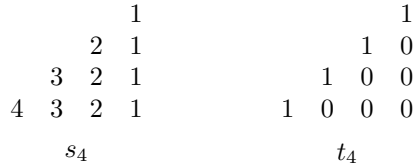
\begin{figure}[ht] 
\begin{tikzpicture}[scale=0.4]
  \draw (0,0) node(a){
  $
  \begin{array}{cccc}
    &&&1\\
    &&2&1\\
    &3&2&1\\
    4&3&2&1
  \end{array}
  $
  };

  \draw (9,0) node(b){
  $
  \begin{array}{cccc}
    &&&1\\
    &&1&0\\
    &1&0&0\\
    1&0&0&0
  \end{array}
  $
  };
\draw (0,-3) node{$s_4$};
\draw (9,-3) node{$t_4$};
\end{tikzpicture}
\caption{Special configurations on $P_4$.}\label{fig:special}
\end{figure}

Let $\tD_{n}$ and $\tD_{2n\times 2n}$ be the reduced Laplacians for  $P_n$ and
$\sg_{2n\times 2n}$, respectively.  The following are straightforward
calculations:
\begin{enumerate}
  \item $\tD_n s_n=t_n$.
  \item If $c\in\Z P_n$, then $\tD_{2n\times 2n}(\phi(c))$ equals
    $\phi(\tD_n(c))$ at all non-sink vertices of $\sg_{2n\times 2n}$ except along the
diagonal and anti-diagonal, where they differ by a factor of~$2$:
    \[
    \tD_{2n\times 2n}(\phi(c))_{ij}=
    \begin{cases}
      2\,\phi(\tD_n(c))_{ij}&\text{for $i=j$ or $i+j=2n+1$,}\\
      \phi(\tD_n(c))_{ij}&\text{otherwise.}
    \end{cases}
    \]
\end{enumerate}
Let $\tL_n\subset\Z V_n$ and $\tL_{2n\times 2n}\subset\Z V_{2n\times 2n}$ denote
the images of $\tD_n$ and $\tD_{2n\times 2n}$, respectively.  Identify the
sandpile groups of $P_n$ and $\sg_{2n\times 2n}$ with $\Z V_n/\tL_n$ and $\Z
V_{2n\times 2n}/\tL_{2n\times 2n}$, respectively.  To show that $\psi$ is
well-defined and injective, we need to show that $k\,\vec{2}_n\in\tL_n$ for some
integer $k$ if and only if $k\,\vec{2}_{2n\times 2n}\in\tL_{2n\times 2n}$.
Since the reduced Laplacians are invertible over $\Q$, there exist unique
vectors $x$ and~$y$ defined over the rationals such that
\[
\tD_nx=\vec{2}_n\quad\text{and}\quad \tD_{2n\times 2n}y=\vec{2}_{2n\times 2n}.
\]
Using the special configurations $s_n$ and $t_n$ and the two calculations noted
above,
\[
\tD_nx=\vec{2}_n\quad\Longrightarrow\quad \tD_n(x-s_n)=\vec{2}_n-t_n
\quad\Longrightarrow\quad \tD_{2n\times 2n}\phi(x-s_n)=\vec{2}_{2n\times 2n}.
\]
In other words,   
\begin{equation}
  y=\phi(x-s_n).
  \label{eqn:y}
\end{equation}
Using the fact that $\tD_n$ is invertible over $\Q$, we see that
$k\,\vec{2}_n\in\tL_n$ if and only if $kx$ has integer coordinates.
By~(\ref{eqn:y}), this is the same as saying $ky$ has integer components, which
in turn is equivalent to $k\,\vec{2}_{2n\times 2n}\in\tL_{2n\times 2n}$, as required. 
\end{proof}
Combining this result with Proposition~\ref{prop:an} gives
\begin{cor}\label{cor:all-2s order}
  The order of $\vec{2}_{2n\times 2n}$ divides $a_n$. 
\end{cor}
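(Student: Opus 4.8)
The plan is to deduce this immediately from Theorem~\ref{thm4} together with Proposition~\ref{prop:an}, since essentially all of the work has already been done. First I would invoke Theorem~\ref{thm4}, which provides a well-defined injective group homomorphism
\[
\psi\colon\langle\vec{2}_{2n\times 2n}\rangle\to\sand(P_n).
\]
Because $\psi$ is injective, it identifies the cyclic group $\langle\vec{2}_{2n\times 2n}\rangle$ with a subgroup of the finite abelian group $\sand(P_n)$.

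Next I would apply Lagrange's theorem: the order of the subgroup $\psi(\langle\vec{2}_{2n\times 2n}\rangle)$ divides $|\sand(P_n)|$. Since $\psi$ is an isomorphism onto its image, this order equals $|\langle\vec{2}_{2n\times 2n}\rangle|$, which is exactly the order of the group element $\vec{2}_{2n\times 2n}$ (recall from the start of Section~\ref{section:order of all-2s} that for a recurrent configuration such as $\vec{2}_{2n\times 2n}$—recurrence being guaranteed by Proposition~\ref{prop:all-ones}(2)—the notion of order as a configuration agrees with its order in the sandpile group). Hence $\mathrm{order}(\vec{2}_{2n\times 2n})$ divides $|\sand(P_n)|$.

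Finally I would invoke Proposition~\ref{prop:an}, which states $\#\,\sand(P_n)=a_n$, to conclude that $\mathrm{order}(\vec{2}_{2n\times 2n})\mid a_n$. There is no real obstacle in this corollary: all the substance lies in the construction of $\psi$ and the verification of the two straightforward calculations $\tD_n s_n=t_n$ and the diagonal-doubling identity inside the proof of Theorem~\ref{thm4}, and in Pachter's theorem underlying Proposition~\ref{prop:a_n}; the corollary itself is just Lagrange's theorem applied to the injection.
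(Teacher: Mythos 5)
Your proposal is correct and matches the paper's argument: the paper also deduces the corollary by combining the injection $\psi$ of Theorem~\ref{thm4} with Proposition~\ref{prop:an}, the divisibility being exactly Lagrange's theorem applied to the image of $\langle\vec{2}_{2n\times 2n}\rangle$ in $\sand(P_n)$. Your added remarks (recurrence of $\vec{2}_{2n\times 2n}$ so that the two notions of order agree) are consistent with the paper's setup and introduce nothing new.
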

\section{Conclusion}\label{section:conclusion}
We conclude with a list of suggestions for further work.
\medskip

\noindent{\bf 1.} Theorem~\ref{thm2} states that the number of domino tilings of
a M\"obius checkerboard equals the number of domino tilings of an associated
ordinary checkerboard after assigning weights to certain domino positions.  We
would like to see a direct bijective proof---one that does not rely on the Lu-Wu
formula (and thus giving a new proof of that formula).   For instance, consider
the tiling of the $4\times4$ checkerboard that appears second in the top row of
Figure~\ref{fig:checker4x4}.  It has one domino of weight~$2$.  So this weighted
tiling should correspond to two tilings of the $4\times4$ M\"obius checkerboard.
Presumably, one of these two tilings is just the unweighted version of the given
tiling.  One might imagine that the other tiling would result from pushing the
single blue domino to the right one square so that it now wraps around on the
M\"obius checkerboard, and then making room for this displacement by
systematically shifting the other dominos.
\smallskip

\noindent{\bf 2.} Section~\ref{section:order of all-2s} is motivated by Irena
Swanson's question: what is the order of the all-$1$s configuration,
$\vec{1}_{m\times n}$, on the $m\times n$ sandpile grid graph?
Proposition~\ref{prop:all-ones}~(\ref{prop:all-ones3}) shows this order is
either the same as or twice the order of the all-$2$s
configuration,~$\vec{2}_{m\times n}$.  It would be nice to know when each case
holds.  Corollary~\ref{cor:all-2s order} says the order of $\vec{2}_{2n\times
2n}$ divides the integer $a_n$ of Proposition~\ref{prop:a_n}, connected with
domino tilings.  When is this order equal to $a_n$?  Ultimately, of course, we
would like to know the answer to Swanson's original question.

\noindent{\bf 3.} Example~\ref{example:symm4x4} introduces an action of the
sandpile group of the $2m\times 2n$ sandpile grid graph on the domino tilings of
the $2m\times 2n$ checkerboard.  Perhaps this group action deserves further study.

\noindent{\bf 4.} To summarize some of the main ideas of this paper, suppose a
group acts on an arbitrary sandpile graph $\Gamma$.  If the corresponding
symmetrized reduced Laplacian or its transpose is the (ordinary) reduced
Laplacian of a sandpile graph~$\Gamma'$, then Proposition~\ref{prop:symmetric
subgroup iso} yields a group isomorphism between the symmetric configurations on
$\Gamma$ and the sandpile group $\sand(\Gamma')$ of $\Gamma'$.  By the
matrix-tree theorem, the size of the latter group is the number of spanning
trees of $\Gamma'$ (and, in fact, as mentioned earlier, $\sand{\Gamma'}$ is
well-known to act freely and transitively on the set of spanning trees of
$\Gamma'$).  The generalized Temperley bijection then gives a correspondence
between the spanning trees of $\Gamma'$ and perfect matchings of a corresponding
graph, $\mathcal{H}(\Gamma')$. Thus, the number of symmetric recurrents
on $\Gamma$ equals the number of perfect matchings of $\mathcal{H}(\Gamma')$.
We have applied this idea to the case of a particular group acting on sandpile
grid graphs.  Does it lead to anything interesting when applied to other classes
of graphs with group action?  The Bachelor's thesis of the first
author~\cite{Florescu} includes a discussion of the case of a dihedral action on
sandpile grid graphs.


\bibliography{tilings}{}

\providecommand{\bysame}{\leavevmode\hbox to3em{\hrulefill}\thinspace}
\providecommand{\MR}{\relax\ifhmode\unskip\space\fi MR }
\providecommand{\MRhref}[2]{%
  \href{http://www.ams.org/mathscinet-getitem?mr=#1}{#2}
}
\providecommand{\href}[2]{#2}
\begin{thebibliography}{10}

\bibitem{BTW}
Per Bak, Chao Tang, and Kurt Wiesenfeld, \emph{Self-{O}rganized {C}riticality:
  {A}n {E}xplanation of {$1/f$} {N}oise}, Phys. Rev. Lett. \textbf{59} (1987),
  no.~4, 381--384.

\bibitem{Baker}
Matthew Baker and Serguei Norine, \emph{Riemann-{R}och and {A}bel-{J}acobi
  theory on a finite graph}, Adv. Math. \textbf{215} (2007), no.~2, 766--788.

\bibitem{Baker2}
Matthew Baker and Farbod Shokrieh, \emph{Chip-firing games, potential theory on
  graphs, and spanning trees}, J. Combin. Theory Ser. A \textbf{120} (2013),
  no.~1, 164--182.

\bibitem{Biggs}
Norman Biggs, \emph{Algebraic potential theory on graphs}, Bull. London Math.
  Soc. \textbf{29} (1997), no.~6, 641--682.

\bibitem{Ciucu}
Mihai Ciucu, Weigen Yan, and Fuji Zhang, \emph{The number of spanning trees of
  plane graphs with reflective symmetry}, J. Combin. Theory Ser. A \textbf{112}
  (2005), no.~1, 105--116.

\bibitem{Pachter}
\bysame, \emph{The number of spanning trees of plane graphs with reflective
  symmetry}, J. Combin. Theory Ser. A \textbf{112} (2005), no.~1, 105--116.

\bibitem{Payne}
F.~Cools, J.~Draisma, S.~Payne, and E.~Robeva, \emph{A tropical proof of the
  {B}rill-{N}oether {T}heorem}, Adv. Math. \textbf{230} (2012), 759--776.

\bibitem{Dhar1}
Deepak Dhar, \emph{Self-organized critical state of sandpile automaton models},
  Phys. Rev. Lett. \textbf{64} (1990), no.~14, 1613--1616.

\bibitem{Dhar2}
\bysame, \emph{Theoretical studies of self-organized criticality}, Phys. A
  \textbf{369} (2006), no.~1, 29--70.

\bibitem{Dochtermann}
Anton Dochtermann and Raman Sanyal, \emph{Laplacian ideals, arrangements, and
  resolutions}, eprint, {\tt arXiv:1212.6244}, to appear in J. Algebraic
  Combin.

\bibitem{Durgin}
Natalie~J. Durgin, \emph{Abelian {S}andpile {M}odel on {S}ymmetric {G}raphs},
  Bachelor's thesis, Harvey Mudd College, 2009.

\bibitem{Fey}
Anne Fey, Lionel Levine, and David~B. Wilson, \emph{Approach to criticality in
  sandpiles}, Phys. Rev. E (3) \textbf{82} (2010), no.~3, 031121, 14.

\bibitem{Florescu}
Laura Florescu, \emph{New connections between the {A}belian {S}andpile {M}odel
  and domino tilings}, Bachelor's thesis, Reed College, 2011.

\bibitem{Holroyd}
Alexander~E. Holroyd, Lionel Levine, Karola M{\'e}sz{\'a}ros, Yuval Peres,
  James Propp, and David~B. Wilson, \emph{Chip-firing and rotor-routing on
  directed graphs}, In and out of equilibrium. 2, Progr. Probab., vol.~60,
  Birkh\"auser, Basel, 2008, pp.~331--364.

\bibitem{Hopkins2}
Sam Hopkins and David Perkinson, \emph{Bigraphical {A}rrangements}, eprint,
  {\tt arXiv:1212.4398}, to appear in Trans. Amer. Math. Soc.

\bibitem{Hopkins1}
\bysame, \emph{Orientations, semiorders, arrangements, and parking functions},
  Elec. J. of Combin. \textbf{19} (2012), no.~4.

\bibitem{JSZ}
P.~John, H.~Sachs, and H.~Zernitz, \emph{Problem 5. {D}omino {C}overs in
  {S}quare {C}hessboards}, Zastosowania Matematyki (Applicationes Mathematicae)
  \textbf{XIX} (1987), no.~3--4, 635--641.

\bibitem{Kasteleyn}
P.~W. Kasteleyn, \emph{The statistics of dimers on a lattice, {I}. the number
  of dimer arrangements on a quadratic lattice}, Physica \textbf{27} (1961),
  1209--1225.

\bibitem{KPW}
Richard~W. Kenyon, James~G. Propp, and David~B. Wilson, \emph{Trees and
  matchings}, Electron. J. Combin. \textbf{7} (2000), Research Paper 25, 34 pp.
  (electronic).

\bibitem{Levine}
Lionel Levine and Yuval Peres, \emph{Strong spherical asymptotics for
  rotor-router aggregation and the divisible sandpile}, Potential Anal.
  \textbf{30} (2009), no.~1, 1--27.

\bibitem{LW}
Wentao~T. Lu and F.~Y. Wu, \emph{Close-packed dimers on nonorientable
  surfaces}, Phys. Lett. A \textbf{293} (2002), no.~5-6, 235--246.

\bibitem{Madhu2}
Madhusudan Manjunath, Frakn-Olaf Schreyer, and John Wilmes, \emph{Minimal
  {F}ree {R}esolutions of the $g$-{P}arking {F}unction {I}deal and the
  {T}oppling {I}deal}, eprint, {\tt arXiv:1210.7569}, to appear in Trans. Amer.
  Math. Soc.

\bibitem{Madhu}
Madhusudan Manjunath and Bernd Sturmfels, \emph{Monomials, binomials and
  {R}iemann-{R}och}, J. Algebraic Combin. \textbf{37} (2013), no.~4, 737--756.

\bibitem{Maslov}
Sergei Maslov, \emph{Sandpile applet},
  \url{http://www.cmth.bnl.gov/~maslov/Sandpile.htm}, Online: accessed
  22-May-2014.

\bibitem{MH}
J.~C. Mason and D.~C. Handscomb, \emph{Chebyshev polynomials}, Chapman \&
  Hall/CRC, Boca Raton, FL, 2003.

\bibitem{Merino}
Criel Merino~L{\'o}pez, \emph{Chip firing and the {T}utte polynomial}, Ann.
  Comb. \textbf{1} (1997), no.~3, 253--259.

\bibitem{Molinari}
Luca~Guido Molinari, \emph{Determinants of block tridiagonal matrices}, Linear
  Algebra and its Applications \textbf{429} (2008), 2221–2226.

\bibitem{Musiker}
Gregg Musiker, \emph{The critical groups of a family of graphs and elliptic
  curves over finite fields}, J. Algebraic Combin. \textbf{30} (2009), no.~2,
  255--276.

\bibitem{OEIS}
\emph{The {O}n-{L}ine {E}ncyclopedia of {I}nteger {S}equences}, A067502, {\tt
  oeis.org/A081567}.

\bibitem{Ostojic}
S.~Ostojic, \emph{Patterns formed by addition of grains to only one site of an
  abelian sandpile.}, Phys. A \textbf{318} (2003), 187--199.

\bibitem{Paoletti}
Gugliemo Paoletti, \emph{Deterministic {A}belian {S}andpile {M}odels and
  {P}atterns}, Ph.D. thesis, Univerist\`a di Pisa, 2012.

\bibitem{Pedgen2}
Wesley Pedgen and Charles~K Smart, \emph{Apollonian structure in the {A}belian
  {S}andpile}, eprint, {\tt arXiv:1208.4839}, 2012.

\bibitem{Pedgen1}
Wesley Pegden and Charles~K. Smart, \emph{Convergence of the {A}belian
  sandpile}, Duke Math. J. \textbf{162} (2013), no.~4, 627--642.

\bibitem{Wilmes}
David Perkinson, Jacob Perlman, and John Wilmes, \emph{Primer for the algebraic
  geometry of sandpiles}, Tropical and Non-Archimedean Geometry, Contemp.
  Math., vol. 605, Amer. Math. Soc., Providence, RI, 2013, pp.~211--256.

\bibitem{Postnikov}
Alexander Postnikov and Boris Shapiro, \emph{Trees, parking functions,
  syzygies, and deformations of monomial ideals}, Trans. Amer. Math. Soc.
  \textbf{356} (2004), no.~8, 3109--3142 (electronic).

\bibitem{Sadhu}
Tridib Sadhu and Deepak Dhar, \emph{Pattern formation in growing sandpiles with
  multiple sources or sinks}, J. Stat. Phys. \textbf{138} (2010), no.~4-5,
  815--837.

\bibitem{Speer}
Eugene Speer, \emph{Asymmetric abelian sandpile models}, J. Stat. Phys. (1993),
  61--74.

\bibitem{sage}
W.\thinspace{}A. Stein et~al., \emph{{S}age {M}athematics {S}oftware ({V}ersion
  5.1)}, The Sage Development Team, 2012, {\tt http://www.sagemath.org}.

\bibitem{Temperley}
H.~N.~V. Temperley and Michael~E. Fisher, \emph{Dimer problem in statistical
  mechanics---an exact result}, Philos. Mag. (8) \textbf{6} (1961), 1061--1063.

\bibitem{wiki-chebyshev}
Wikipedia, \emph{Chebyshev polynomials --- {W}ikipedia{,} the free
  encyclopedia}, 2012, [Online; accessed 22-May-2014].

\end{thebibliography}
\bibliographystyle{amsplain}
\end{document}